\documentclass[12pt]{article}
\usepackage{amsmath,amssymb,amsthm,amsfonts,hhline,color}
\usepackage{mathrsfs}
\usepackage{bbm}
\usepackage[titletoc,title]{appendix}
\usepackage[dvipdfmx]{hyperref}
\usepackage{comment}
\usepackage{color}
%\usepackage{graphicx}
%\PassOptionsToPackage{dvipdfmx}{graphicx} %or dvipdfm depending on the tex system
%\usepackage{extpfeil}
%\usepackage{tikz}
%\usepackage{pstricks}
%\usepackage{epsfig}
%\usepackage{amscd}

%\usepackage{txfonts}
%\usepackage{pxfonts}
%\usepackage{Euler}
%\usepackage{mathptmx}
%\usepackage{mathpazo}

%%% Commands %%%

\newcommand{\ds}{\displaystyle}

\newcommand{\dfr}[2]{\dfrac{#1}{#2}}%ams
%ams
\newcommand{\cd}{\cdot}
\newcommand{\lam}{\lambda}
\newcommand{\cds}{\cdots}
\newcommand{\dsum}{\displaystyle \sum}
\newcommand{\ol}[1]{\overline{#1}}

\renewcommand{\l}{\left}
\renewcommand{\r}{\right}

\newcommand{\q}{\quad}

\newcommand{\la}{\langle}
\newcommand{\ra}{\rangle}

\newcommand{\abs}[1]{\lvert{#1}\rvert}
%\newcommand{\any}{\forall}

%%% Symbols %%%

\newcommand{\Z}{\mathbb{Z}}
\newcommand{\C}{\mathbb{C}}
\newcommand{\R}{\mathbb{R}}
\newcommand{\N}{\mathbb{N}}

\newcommand{\aut}{\mathrm{Aut}}

\newcommand{\tr}{\mathrm{Tr}}

\newcommand{\id}{\mathrm{id}}

%%%%%%%%%%%%%%% Greek %%%%%%%%%%%%%%%
\newcommand{\al}{\alpha}

%%%%%%%%%%%%%%%%%%%%%%%%%%%%%%%

\newcommand{\w}{\omega}

\newcommand{\fs}{\mathfrak{S}}
\newcommand{\vac}{\mathbbm{1}}

\newcommand{\mymid}{\,|\,}

\topmargin -10mm
\textwidth 160mm
\oddsidemargin 0mm
\textheight 225mm

\makeatletter
\@addtoreset{equation}{section}

\makeatother

\theoremstyle{plain}
\newtheorem{mthm}{Theorem}
\newtheorem{thm}{Theorem}[section]
\newtheorem{prop}[thm]{Proposition}
\newtheorem{lem}[thm]{Lemma}
\newtheorem{cor}[thm]{Corollary}

\theoremstyle{definition}
\newtheorem{df}[thm]{Definition}

\newtheorem{cond}{Condition}
\newtheorem{nota}[thm]{Notation}
\newtheorem{rem}[thm]{Remark}

\newcommand{\sfr}[2]{\leavevmode\kern-.05em
  \raise.5ex\hbox{\the\scriptfont0 #1}\kern-.1em
  /\kern-.15em\lower.25ex\hbox{\the\scriptfont0 #2}\kern.02em}

\newcommand{\shf}{\sfr{1}{2}}
\newcommand{\sshf}{\leavevmode\kern-.05em
  \raise.3ex\hbox{\the\scriptfont0 1}\kern-.1em
  /\kern-.10em\lower.5ex\hbox{\the\scriptfont0 2}\kern.02em}

\DeclareMathOperator*{\tensor}{\otimes}

\newcommand{\pf}{\noindent {\bf Proof:}\q }

\newcommand{\com}{\mathrm{Com}}

%%%%%%%%%%%%%%%%%%%%%%%%%%%%%%%%%%%%
\renewcommand{\ker}{\mathrm{Ker}}

%%%%%%%%%%%%%%%%%%%%%%%%%%%%%%%%%%%
\newcommand{\longto}{\longrightarrow}

\title{The classification of vertex operator algebras of OZ-type
generated by Ising vectors of $\sigma$-type}

\author{
  Cuipo  Jiang\thanks{Partially supported by NSFC grants 12171312, 11771281.}
  \medskip\\
  \textit{\small School of Mathematical Science,  Shanghai Jiao Tong University} \\
  \textit{\small Shanghai, 200240, China}
  \\
  {\small email: \texttt{cpjiang@sjtu.edu.cn}}
  \bigskip\\
  Ching Hung Lam\thanks{Partially supported by NSTC grant  110-2115-M-001-003-MY3  of Taiwan.}%
  \medskip\\
  \textit{\small Institute of Mathematics, Academia Sinica, Taipei, Taiwan 10617}\\
  {\small e-mail: \texttt{chlam@math.sinica.edu.tw}}
  \bigskip\\
  Hiroshi Yamauchi\footnote{Partially supported by JSPS KAKENHI Grant Numbers JP19K03409, JP19KK0065,
  JP22K03252.}
  \medskip\\
  \textit{\small Department of Mathematics,  Tokyo Woman's Christian University}\\
  \textit{\small 2-6-1 Zempukuji, Suginami-ku, Tokyo 167-8585, Japan}\\
  {\small e-mail: \texttt{yamauchi@lab.twcu.ac.jp}}
  \bigskip\\
  {\small 2010 Mathematics Subject Classification: Primary 17B69;
  Secondary 20B25, 20D08.}
}
\date{}

\begin{document}

\maketitle

\begin{abstract}
We classify vertex operator algebras (VOAs) of OZ-type generated by Ising vectors
of $\sigma$-type.
As a consequence of the classification, we also prove that such VOAs are simple, rational,
$C_2$-cofinite and unitary, that is, they have compact real forms generated by Ising vectors
of $\sigma$-type over the real numbers.
\end{abstract}

\pagestyle{plain}
%%%%%%    TEXT  START   %%%%%%%%

\baselineskip 6mm

%\tableofcontents

\section{Introduction}
This article is a continuation of our work \cite{JLY} on characterizing vertex operator algebras
of OZ-type generated by Ising vectors of $\sigma$-type.
A vertex operator algebra (VOA for short) is of OZ-type
(stands for One-Zero) if it has the grading $V=\oplus_{n=0}^{\infty} V_n$ such that $\dim V_0=1$
and $\dim V_1=0$.
In this case,  $V$ has the unique invariant bilinear form such that  the vacuum vector $\vac$ has norm $1$, i.e., $(\vac\mymid \vac)=1$
(cf.~\cite{Li}) and the degree two subspace $V_2$ of $V$ carries a structure of a commutative
algebra called the Griess algebra of $V$.

A Virasoro vector $e\in V_2$ is called an \textit{Ising vector} if the subVOA $\la e\ra$ generated
by $e$ is isomorphic to the simple Virasoro VOA $L(\shf,0)$ of central charge $1/2$.
It is well known (cf.~\cite{DMZ,W}) that a vector $e\in V_2$ is an Ising vector
if and only if it satisfies the following relations.
\begin{equation}\label{eq:1.1}
  e_{(1)}e=2e,~~~
  4e_{(3)}e=\vac,~~~
  (64{e_{(-1)}}^3+93{e_{(-2)}}^2-264e_{(-3)}e_{(-1)}-108e_{(-5)})\vac =0,
\end{equation}
where we expand $Y(e,z)=\sum_{n\in \Z} e_{(n)}z^{-n-1}$.
If $V$ is of OZ-type, the first two relations are equivalent to saying that $e$ is
twice of an idempotent of the Griess algebra of $V$ and has norm $1/4$.
An Ising vector $e$ is said to be \textit{of $\sigma$-type on $V$} if there are no irreducible
$\la e\ra$-submodules isomorphic to $L(\shf,\sfr{1}{16})$ in $V$.
In this case, one can define an automorphism of $V$ by
\begin{equation}%\label{equation:1.2}
  \sigma_{e}
  :=
  \begin{cases}
    ~1 & \text{on }~ V[e;0],
    \\
    -1 & \text{on }~ V[e;\sfr{1}{2}],
  \end{cases}
\end{equation}
where $V[e;h]$ denotes the sum of
irreducible $\la e\ra$-submodules isomorphic to $L(\shf,h)$ for $h=0$ and $1/2$ (cf.~\cite{Mi96}). The involution $\sigma_e$ is called a $\sigma$-involution associated with $e$.
By definition, it is easy to see that 
\begin{equation}\label{conj}
  \sigma_{ge}=g\sigma_e g^{-1} \quad \text{ for any } g\in \aut\,V.
\end{equation}
Let $G_V=\la \sigma_e\mid e\in E_V\ra$, where $E_V$ is the set of Ising vectors of $V$ of $\sigma$-type. It is shown in \cite{Mi96} that if $V$ is of OZ-type then $G_V$ is a 3-transposition group (cf.~Definition \ref{df:2.1}). Under
the assumption that the VOAs have unitary forms, 3-transposition groups obtained in this manner are classified in \cite{Ma}. 
A complete list of 3-transposition groups generated by $\sigma$-involutions
as well as examples of unitary VOAs realizing these groups is obtained in \cite{Ma}; nevertheless, the classification of VOAs of OZ-type generated by Ising vectors of
$\sigma$-type is not completed yet.
\begin{comment}
It is expected that if $V$ is a VOA of OZ-type and generated by Ising vectors of $\sigma$-type
then the associated 3-transposition group $G_V$ is finite and isomorphic to one of the
examples in Matsuo's classification \cite{Ma}; furthermore, $V$ is simple and its isomorphism class is uniquely determined by the group $G_V$.
\end{comment}

It is proved in \cite{JLY} without assuming the unitarity that if an OZ-type VOA $V$ is  simple and generated by Ising vectors of $\sigma$-type, the VOA structure of $V$  is uniquely
determined by its Griess algebra which is described by the group $G_V$  (see Theorem \ref{thm:3.7}).
It is also proved in \cite{JLY} 
that if a VOA $V$ of OZ-type is generated by Ising vectors of $\sigma$-type and the associated
3-transposition group $G_V$ is isomorphic to the symmetric group $\mathfrak{S}_{n+1}$ of degree
$n+1$ for $n\geq 2$ then $V$ is simple and isomorphic to the abelian coset algebra $K(A_n,2)$
of type A (cf.~Definition \ref{KG}) which is extensively studied in \cite{JL,LS}.
The simplicity for the other cases are not established so far and the uniqueness
is still open.

In this article, we will classify vertex operator algebras of OZ-type generated by Ising vectors
of $\sigma$-type without assuming the unitarity.
It turns out that all such VOAs are unitary so that our classification coincides
with the one in \cite{Ma} (cf.~Conjecture 3.18 of \cite{JLY}).
The complete list of vertex operator algebras of OZ-type generated by Ising vectors of
$\sigma$-type is as follows.
For the definition of the abelian coset algebras $K(R,k)$ and the notion of lattice VOAs and their commutants,  we refer the readers to
Section \ref{sec:3.2}. We also refer to \cite{Ha1,Ha2} for the notation of finite groups. 

\begin{mthm}\label{thm:main}
  Let $V$ be a vertex operator algebra of OZ-type generated by Ising vectors of $\sigma$-type.
Let $E_V$ be the set of Ising vectors of $V$ of $\sigma$-type and
  $G_V=\la \sigma_e\mid e\in E_V\ra$.  
  Then $G_V$ is finite and the  
  list of the indecomposable pairs $(V,G_V)$ of a VOA $V$ and its group $G_V$ is as follows.
  \begin{equation}
  \begin{array}{c}
    (K(A_1,2),\id),~~
    (K(A_n,2),\mathfrak{S}_{n+1})~(n\geq 2),~~
    (V_{\sqrt{2}A_n}^+,F_{n+1}{:}\mathfrak{S}_{n+1})~(n\geq 2),
    \medskip\\
    (V_{\sqrt{2}D_n}^+,F_{n}^2{:}\mathfrak{S}_{n})~(n\geq 4),~~
    (K(E_6,2),\mathrm{O}_6^-(2)),~~
    (K(E_7,2),\mathrm{Sp}_6(2)),~~
    \medskip\\
    (K(E_8,2),\mathrm{Sp}_8(2)),~~
    (V_{\sqrt{2}E_6}^+,2^6{:}\mathrm{O}_6^-(2)),~~
    (V_{\sqrt{2}E_7}^+,2^6{:}\mathrm{Sp}_6(2)),~~
    \medskip\\
    (\com(K(A_2,2),V^+_{\sqrt{2}E_8}), \mathrm{O}_8^-(2)),~~
    (V_{\sqrt{2}E_8}^+,\mathrm{O}_{10}^+(2)).
  \end{array}
  \end{equation}
 The $\R$-subalgebra of $V$ generated by $E_V$ is a compact real form of $V$; thus  
  $V$ is unitary.
  
Here the natural module $F_n$ for $\fs_{n}$ is defined as $2^{n-1}$ if $n$ is odd
  and as $2^{n-2}$ if $n$ is even. The pair $(V,G_V)$ is indecomposable if $\{ \sigma_e\mid e\in E_V\}$ forms a single conjugacy class in $G_V$. 
\end{mthm}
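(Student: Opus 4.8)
The plan is to reduce the classification to a combinatorial problem about the Fischer graph of the 3-transposition group $G_V$ and then to match each admissible graph with one of the explicit VOAs in the list. First I would record the local structure. By \cite{Mi96}, $G_V=\la\sigma_e\mid e\in E_V\ra$ is a 3-transposition group, and the map $e\mapsto\sigma_e$ identifies $E_V$ with the set $D$ of 3-transpositions: since $(e\mymid e)=\shf^2=\sfr{1}{4}\neq 0$, no Ising vector lies in the radical of the invariant form, so distinct idempotents yield distinct involutions. For two Ising vectors $e,f$ of $\sigma$-type, the dihedral subVOA $\la e,f\ra$ is one of finitely many types, so that $(e\mymid f)$ depends only on the order of $\sigma_e\sigma_f$: it equals $\sfr{1}{4}$ when $\sigma_e=\sigma_f$, it is $0$ when $\sigma_e\sigma_f$ has order $2$, and it takes a fixed positive value $\beta$ when $\sigma_e\sigma_f$ has order $3$. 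Consequently the Gram matrix of $E_V$ has the shape $\sfr{1}{4}I+\beta A$, where $A$ is the adjacency matrix of the graph on $D$ recording order-$3$ products; in other words $\Span(E_V)$ carries the structure of a Matsuo-type algebra attached to $G_V$.

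Next I would obtain finiteness and the classification of $G_V$ simultaneously from a spectral constraint. Passing to $\bar V=V/\rad(\cdot\mymid\cdot)$, the invariant form becomes non-degenerate, so $\bar V$ is simple (as $\bar V_0$ is one-dimensional); the images of the $e\in E_V$ remain Ising vectors of $\sigma$-type generating $\bar V$, and $G_{\bar V}$ is a quotient of $G_V$ sharing the same Fischer graph. Because each dihedral subVOA $\la e,f\ra$ embeds a unitary copy of the Ising model $L(\shf,0)$, the form restricted to $\Span(E_V)$ is positive semi-definite; thus $\sfr{1}{4}I+\beta A$ is positive semi-definite on every finite subset, which forces the least eigenvalue of the Fischer graph to be bounded below by $-\sfr{1}{4\beta}$. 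Graphs with a fixed lower bound on their least eigenvalue are classified, and the underlying root-lattice/line-graph dichotomy is exactly what produces the $A$–$D$–$E$ pattern: the connected admissible cases are precisely those yielding the groups $\sym_{n+1}$, $F_{n+1}{:}\sym_{n+1}$, $F_n^2{:}\sym_n$, and the orthogonal and symplectic groups over $\mathbb{F}_2$ appearing in the statement. This both shows $G_V$ is finite and pins down the indecomposable (i.e.\ connected) possibilities.

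Finally I would identify $V$ and establish simplicity and unitarity. For each admissible $G$ the corresponding VOA in the table ($K(R,2)$, $V^+_{\sqrt{2}R}$, or the listed commutant) is known to be simple, rational, $C_2$-cofinite and unitary, with Griess algebra the Matsuo algebra of $G$; since $\bar V$ is a simple OZ-type VOA generated by $\sigma$-type Ising vectors with the same group $G_{\bar V}$, Theorem \ref{thm:3.7} of \cite{JLY} forces $\bar V$ to be isomorphic to that model. It then remains to show $\rad(\cdot\mymid\cdot)=0$, i.e.\ $V=\bar V$: here I would use that $V$ surjects onto the rational, $C_2$-cofinite VOA $\bar V$ with matching degree-two part, together with a spanning argument bounding $\dim V_n$ by $\dim\bar V_n$, to conclude the radical vanishes. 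Once $V$ is simple and isomorphic to a unitary model, the real subalgebra generated by $E_V$ carries the positive-definite restriction of the form and is a compact real form, yielding unitarity. The hard part is removing the unitarity hypothesis at two places: proving the form is positive semi-definite on $\Span(E_V)$ purely from the local Ising structure, which powers the spectral classification, and proving $\rad(\cdot\mymid\cdot)=0$ for the non-symmetric-group cases, where simplicity was previously open.
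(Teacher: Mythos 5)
Your skeleton (pass to the simple quotient $\bar V=V/\mathrm{rad}$, identify its Griess algebra with the Matsuo algebra of $G_{\bar V}$, invoke Theorem \ref{thm:3.7} to pin down $\bar V$, then show the radical vanishes) does match the paper's Steps 0--2 in outline, but both of the steps that carry the actual weight are unjustified, and the first is unjustifiable as stated. You claim the form on $\mathrm{Span}(E_V)$ is positive semi-definite ``because each dihedral subVOA $\la e,f\ra$ embeds a unitary copy of the Ising model.'' This is a non-sequitur: the pairwise structure only determines the \emph{entries} of the Gram matrix $\tfrac14 I+\tfrac1{32}A$, not its signature, which is a global property of the graph $A$. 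Indeed, for $G=F_4^3{:}\mathfrak{S}_4$ every pair of the $48$ putative Ising vectors has exactly the same local dihedral structure, yet the Gram matrix $2^{-5}(8\mathbb{I}+\mathbb{J}_8\tensor A)$ has a negative eigenvalue; excluding this configuration is precisely the content of Lemma \ref{lem:6.4}, and the paper can only prove it \emph{after} Step 1, because the argument needs to know that the subVOA attached to $F_4^2{:}\mathfrak{S}_4$ is $V_{\sqrt 2 D_4}^+$ (whose Griess algebra has dimension $22<24$, forcing a linear dependence that contradicts non-singularity of the big Gram matrix). So positivity is an output of the classification, not an input one can extract from local Ising data; this is exactly what distinguishes the present paper from Matsuo's unitary classification. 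Relatedly, the graph-spectral classification you invoke (graphs with least eigenvalue bounded below) exists only for the bound $-2$; there is no such theorem at the bound $-8$ relevant here, so your route to finiteness of $G_V$ and to the candidate list also collapses. The paper instead gets the candidate list group-theoretically, from symplectic type (Theorem \ref{thm:3.7}(1)) plus Hall's classification (Theorem \ref{thm:hall}), which requires no positivity but yields extra groups \eqref{eq:6.3} that must then be eliminated.

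The second gap is the radical-vanishing step. Saying you would use ``a spanning argument bounding $\dim V_n$ by $\dim \bar V_n$'' is not an argument: $V$ being generated by $V_2$ gives no upper bound on graded dimensions in terms of the simple quotient (that is what ``universal'' objects violate), and in the degenerate cases (e.g.\ $G=F_n^2{:}\mathfrak{S}_n$, $2^6{:}\mathrm{O}_6^-(2)$) you do not even know a priori that $V_2$ matches $\bar V_2$, since the Griess algebra of $V$ could be the degenerate algebra $B(G)$ rather than its non-degenerate quotient. This is where the paper does its real work: the trace inequality of Lemma \ref{lem:3.14} bounds the top weight of a putative maximal ideal using a well-understood subVOA $W$ and forces a contradiction (Section \ref{sec:4}), and for the hardest family $G=F_n{:}\mathfrak{S}_n$ an entire induction (Section 3) is needed, classifying irreducible modules of $\sigma$-type via a quotient of the Zhu algebra mapped onto by the group algebra of the type-$D_n$ Coxeter group, and then using fusion rules for $M(1)^+$ and rationality of $V_\Gamma^+$ to split off the maximal ideal. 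Neither of these mechanisms is visible in your proposal, and without them the two hypotheses you acknowledge as ``the hard part'' remain assumptions rather than conclusions.
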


Let us explain the outline of the proof of Theorem \ref{thm:main}.  
%For brevity, we say $V$ satisfies Condition \ref{cond:1} if $V$ is a VOA of OZ-type and generated by Ising vectors of $\sigma$-type. For a VOA $V$ satisfying Condition \ref{cond:1}, we use $E_V$ to denote  and consider the associated 3-transposition group $G_V=\la \sigma_e\mid e\in E_V\ra$. In order to prove Theorem \ref{thm:main},
First, it suffices to consider the case when the set of Ising vectors of $V$ of $\sigma$-type $E_V$ is indecomposable in the sense that
it has no non-trivial orthogonal decomposition, that is, if $E_V=E_1\perp E_2$ then either
$E_1=\varnothing$ or $E_2=\varnothing$ (see Remark \ref{indec}). We also assume $E_V$ is non-trivial, i.e., $\abs{E_V}>1$.
In this case,  we have a bijection between $e\in E_V$ and $\sigma_e\in G_V$ 
(cf.~(2) of Theorem \ref{thm:3.7}) and we can identify them.
Since $\dim V_2< \infty$, there is a finite subset $\mathcal{E}$ of $E_V$ which is indecomposable
such that $V=\la \mathcal{E}\ra$, $V_2=\C \mathcal{E}$ and $\sigma_e\mathcal{E}=\mathcal{E}$
for any $e\in \mathcal{E}$ (cf.~Lemma \ref{lem:3.6}). 
The group $G=G_{\mathcal{E}}=\la \sigma_e\mid e\in \mathcal{E}\ra$ is a finite 3-transposition subgroup of $G_V$.
As generators of $V$, we will argue about the pair $(\mathcal{E},G)$ of finite sets
rather than $(E_V,G_V)$ since we cannot assume that $E_V$ and $G_V$ are finite sets
without the unitarity.

For a 3-transposition group $(G,I)$, one can consider a commutative but non-associative
algebra $B(G)=\oplus_{i\in I}\C x^i$,
which is linearly spanned by idempotents $x^i/2$, $i\in I$ (cf.~Definition \ref{df:2.4} and \cite{Ma}). 

For $I=\{\sigma_e\in G \mid e\in \mathcal{E}\}$ and $G=\la \sigma_e\mid e\in \mathcal{E}\ra$, there is a natural epimorphism from $B(G)$ to the Griess algebra $V_2=\C \mathcal{E}$.
If $V$ is simple then the kernel of this map coincides with the radical of the bilinear form
on the algebra $B(G)$. 
The quotient of $B(G)$ by its radical is called the non-degenerate quotient.
The framework of our arguments roughly consists of the following steps.

\textbf{Step 0:}
Take a finite and indecomposable subset $\mathcal{E}$ of $E_V$ such that $V=\la \mathcal{E}\ra$,
$V_2=\C \mathcal{E}$ and $\sigma_e \mathcal{E}=\mathcal{E}$ for $e\in \mathcal{E}$.
Then the Griess algebra of $V$ is covered by the algebra $B(G)$
associated with a finite 3-transposition group $G=\la \sigma_e \mid e\in \mathcal{E}\ra$.
The expected candidates of $G$ are listed in Theorem \ref{thm:main}.

\textbf{Step 1:}
Let $G$ be one of the groups listed in Theorem \ref{thm:main}.  We will prove that the Griess algebra of $V$ is indeed isomorphic
to the non-degenerate quotient of $B(G)$ and $V$ is simple; hence the structure of $V$ is uniquely determined by \cite{JLY}.

%\textbf{Step 2:} If $G$ is one of the groups listed in Theorem \ref{thm:main},  then
%we will prove that the Griess algebra of $V$ is indeed isomorphic
%to the non-degenerate quotient of $B(G)$. It follows
%that $V$ is simple and uniquely determined by its Griess algebra by Step 1.

\textbf{Step 2:} Based on the results
obtained in Step 1,  we will show that any group $G$ not listed in Theorem \ref{thm:main} is indeed impossible.
This step completes the proof of Theorem \ref{thm:main}.

There are three infinite families in the list of groups in Theorem \ref{thm:main}; 
the most difficult case is $G=F_n{:}\mathfrak{S}_n$ with $n\geq 4$ which is isomorphic to
the quotient of the Weyl group of type $D_n$ factored by its center.
In this case, the algebra $B(F_n{:}\mathfrak{S}_n)$ is non-degenerate (cf.~Lemma \ref{lem:6.1})   and is isomorphic to the Griess algebra of $V_{\sqrt{2}A_{n-1}}^+$.  
We will prove that $V\cong V_{\sqrt{2}A_{n-1}}^+$ by induction on $n$. The main idea is similar to \cite[Theorem 4.1]{JLY} but it is much more technical since the simple Virasoro VOA of central charge $1$ is not rational. 
After establishing this case, the remaining part can be proved case-by-case, almost recursively.

The organization of this article is as follows.
In Section 2,  we review some known results on vertex operator algebras generated by Ising
vectors of $\sigma$-type. We also recall the notion of 3-transposition groups and discuss some  algebras associated with 3-transposition groups.
Some explicit examples of VOAs and 3-transposition groups are discussed.  
We will also prove an inequality in Lemma \ref{lem:3.14} which strongly constrains VOAs
satisfying Condition \ref{cond:1} and enables us to perform some recursive arguments in latter sections.
Section 3 is devoted to the proof of the isomorphism $V\cong V_{\sqrt{2}A_{n-1}}^+$ when
$G=F_n{:}\mathfrak{S}_n$ which is a part of Step 1.
In Section 4,  we will prove that if the Griess algebra of $V$ is isomorphic to the non-degenerate quotient of the algebra $B(G)$ and $V$ is simple, which completes Step 1.
The inequality in Lemma \ref{lem:3.14} will be used frequently in Step 1.
Once Step 1 is accomplished, we can easily establish Step 2 by considering Gram matrices of
the algebras $B(G)$.

\paragraph{Acknowledgement.}
H.Y.~thanks Toshiyuki Abe, Atsushi Matsuo and Kenichiro Tanabe for valuable information
through personal interactions.

\section{VOAs generated by Ising vectors of $\sigma$-type}\label{S:2}

In this paper, we consider a VOA $V$ satisfying the following condition.

\begin{cond}\label{cond:1}
  $V$ is of OZ-type and generated by its Ising vectors of $\sigma$-type.
\end{cond}

\begin{nota}\label{not:EV}
For a VOA $V$ satisfying Condition 1, we use $E_V$ to denote the set of all Ising vectors of $V$ of $\sigma$-type and set $G_V=\la \sigma_e \mymid e\in E_V\ra$.
\end{nota}

Next lemma follows from \cite[Lemma 3.7]{LSY}. 

\begin{lem}\label{lem:3.3}
If $V$ satisfies Condition \ref{cond:1}, every Ising vector of $V$ is of $\sigma$-type. 
   \end{lem}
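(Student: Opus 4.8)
The plan is to translate the notion of $\sigma$-type into the triviality of a Miyamoto involution and then propagate this triviality from the generators to the whole algebra. Recall that for \emph{any} Ising vector $f$ one has the Miyamoto $\tau$-involution $\tau_f\in\aut V$, acting as $+1$ on $V[f;0]\oplus V[f;\shf]$ and as $-1$ on the sum $V[f;\os]$ of all $L(\shf,\os)$-submodules. Thus $f$ is of $\sigma$-type precisely when $V[f;\os]=0$, i.e. when $\tau_f=\id_V$. Since $V=\la E_V\ra$ and $\tau_f$ is an automorphism, it therefore suffices to prove that $\tau_f$ fixes every generator $e\in E_V$: once $\tau_f$ fixes the generating set it fixes the subalgebra they generate, which is all of $V$.

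Fix $e\in E_V$ and consider the subVOA $U=\la e,f\ra$. Because $U_0\subseteq V_0=\C\vac$ and $U_1\subseteq V_1=0$, the algebra $U$ is again of OZ-type, so the classification of VOAs generated by two Ising vectors (Sakuma's theorem and its refinement) controls its structure: the isomorphism class of the marked triple $(U,e,f)$ is governed by the single number $(e\mymid f)$ and is one of finitely many ``dihedral'' types. As $U$ is $\la f\ra$-invariant, its $\os$-isotypic component for $f$ is $U\cap V[f;\os]$, whence the restriction of $\tau_f$ to $U$ is exactly the Miyamoto involution $\tau_f^{U}$ computed inside $U$; in particular $\tau_f(e)=\tau_f^{U}(e)$, and the claim is reduced to a statement about the two-generated algebra $U$ alone.

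The key point is that within such a two-generated $U$ the property of being of $\sigma$-type is symmetric in the two generators: since $(e\mymid f)=(f\mymid e)$, the uniqueness part of the classification furnishes an automorphism of $U$ interchanging $e$ and $f$, and this automorphism matches the $L(\shf,\os)$-content for $e$ with that for $f$. Because $e$ is of $\sigma$-type on $V$ it is of $\sigma$-type on $U$ (as $U[e;\os]\subseteq V[e;\os]=0$), and therefore $f$ is of $\sigma$-type on $U$ as well; hence $U[f;\os]=0$, so $\tau_f^{U}=\id_U$ and $\tau_f(e)=e$. This symmetry is precisely the content I would extract from \cite[Lemma 3.7]{LSY}. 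Running the argument over all $e\in E_V$ yields $\tau_f=\id_V$, so $f$ is of $\sigma$-type, as desired. I expect the main obstacle to be exactly this last symmetry step: it rests on the complete list of two-generated Ising VOAs together with the verification that the $\os$-part occurs for one generator if and only if it occurs for the other, which is where the external input from \cite{LSY} is essential.
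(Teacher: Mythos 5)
Your opening reduction is correct, and it is surely the intended bridge: $f$ is of $\sigma$-type iff $V[f;\os]=0$ iff $\tau_f=\id_V$, and since the fixed points of $\tau_f$ form a subVOA, it suffices to show $\tau_f(e)=e$ for every $e\in E_V$, because $V=\la E_V\ra$. Keep in mind that the paper's entire proof is the one-line citation of \cite[Lemma 3.7]{LSY}, so this scaffolding plus a pairwise statement about one $\sigma$-type Ising vector and one arbitrary Ising vector is exactly what has to be assembled; up to this point your argument is fine.

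The genuine gap is in how you justify the pairwise step. First, the VOA form of Sakuma's theorem is proved under the hypothesis of a positive definite invariant form, which is precisely the hypothesis this paper refuses to make (classification \emph{without} unitarity is its whole point), so that input is not available here. Second, and more seriously, the claim that ``the uniqueness part of the classification furnishes an automorphism of $U$ interchanging $e$ and $f$'' is unsupported: Sakuma-type theorems classify the two-generated \emph{Griess} subalgebra sitting inside $V_2$, not the two-generated subVOA $U=\la e,f\ra$, and in this generality a VOA is not determined by its Griess algebra --- establishing that kind of uniqueness is exactly Theorem \ref{thm:3.7}(4), which needs simplicity and $\sigma$-type generation, neither of which is known for $U$. (Even at the algebra level, $(e\mymid f)$ alone does not pin down the configuration: the types 3C and 4B share the value $2^{-8}$.) So ``$e$ of $\sigma$-type on $U$ implies $f$ of $\sigma$-type on $U$'' cannot be reached by your route. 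The repair is to notice that you never need $U[f;\os]=0$: you only need the component of $e$ in $V[f;\os]$ to vanish. Since $V$ is of OZ-type, $f_{(1)}$ acts semisimply on $V_2$ with eigenvalues among $2,0,\shf,\os$, so that component is the $\os$-eigenvector part of $e$, and it lies inside the $f_{(1)}$-stable subalgebra of the Griess algebra generated by $e$ and $f$. Hence a purely algebra-level fact suffices: in a two-generated configuration of Ising vectors in which one axis has no $\os$-eigenvectors, the other axis has none either (only the mutually symmetric types 1A, 2A, 2B can occur). That Griess-algebra statement --- not a classification of the VOA $U$, and not positivity --- is the content one needs to extract from \cite[Lemma 3.7]{LSY}, and it is what the paper's citation supplies.
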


Therefore, $E_V$ is indeed the set of all Ising vectors of $V$.
The local structures among Ising vectors are described as follows.

\begin{prop}[\cite{Mi96}]\label{prop:3.4}
  Let $V$ be a VOA of OZ-type and let $e$ and $f$ be distinct elements in $E_V$.
  Then the following hold.
  \\
  \textup{(1)} $(e\mymid f)= 0$ or $2^{-5}$.
  \\
  \textup{(2)} If $(e\mymid f)=0$ then $e_{(1)}f=0$, and if $(e\mymid f)=2^{-5}$ then
  $e_{(1)}f=\dfr{1}{4}(e+f-\sigma_e f)$ in the Griess algebra.
  In the latter case, one also has $\sigma_e f=\sigma_f e$.
  \\
  \textup{(3)} $[\sigma_e,\sigma_f]=1$ if $(e\mymid f)=0$ and $\abs{\sigma_e\sigma_f}=3$ if
  $(e\mymid f)=2^{-5}$.
%  In particular, $G_V$ is a 3-transposition group.
\end{prop}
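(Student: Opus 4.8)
The plan is to reduce the entire proposition to one quantitative input---that the inner product $(e\mymid f)$ can only be $0$ or $2^{-5}$---and to extract everything else from the decomposition of $V_2$ under the zero mode $e_{(1)}=L_0^e$ of the Virasoro algebra generated by $e$. Since $e$ is of $\sigma$-type, $V$ has no $\la e\ra$-submodule isomorphic to $L(\shf,\sfr{1}{16})$, so every irreducible $\la e\ra$-constituent is $L(\shf,0)$ or $L(\shf,\shf)$. Inspecting the possible $L_0^e$-weights at $V$-degree $2$ (using $\dim V_0=1$, $\dim V_1=0$, and that the only degree-$0$ lowest weight vector is $\vac$, which generates $\la e\ra$ itself), $e_{(1)}$ acts semisimply on $V_2$ with eigenvalues in $\{0,\shf,2\}$: the eigenvalue $2$ occurs only on $\C e$; the eigenvalue $\shf$ on the lowest weight vectors of the $L(\shf,\shf)$-constituents, which lie in $V[e;\shf]$; and the eigenvalue $0$ on those of the $L(\shf,0)$-constituents, which lie in $V[e;0]$. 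Writing $f=f_0+f_{1/2}+f_2$ for the eigencomponents, we get $\sigma_e f=f_0-f_{1/2}+f_2$, hence $f_{1/2}=\tfr12(f-\sigma_e f)$; and since $e_{(1)}$ is self-adjoint for the invariant form (as $e$ is quasi-primary), distinct eigenspaces are orthogonal, so with $(e\mymid e)=\tfr14$ the projection of $f$ onto $\C e$ is $f_2=4(e\mymid f)\,e$. Setting $c=(e\mymid f)$ this gives the basic identity
\begin{equation*}
 e_{(1)}f=\tfr12 f_{1/2}+2f_2=\tfr14(f-\sigma_e f)+8c\,e.
\end{equation*}

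Next I would interchange the roles of $e$ and $f$ (the inner product is symmetric, so the same $c$ appears) and use commutativity $e_{(1)}f=f_{(1)}e$ of the Griess algebra; subtracting the two expressions yields $\sigma_f e-\sigma_e f=(32c-1)(f-e)$. If $c=2^{-5}$ then $32c-1=0$, so $\sigma_e f=\sigma_f e$ and the basic identity collapses to $e_{(1)}f=\tfr14(e+f-\sigma_e f)$, which is the non-orthogonal case of (2). For (3) in this case, \eqref{conj} gives $\sigma_e\sigma_f\sigma_e=\sigma_{\sigma_e f}=\sigma_{\sigma_f e}=\sigma_f\sigma_e\sigma_f$, and together with $\sigma_e^2=\sigma_f^2=1$ this braid relation forces $(\sigma_e\sigma_f)^3=1$; the order is exactly $3$ because $\sigma_e f=f$ would give $\sigma_f e=f$ and hence $e=\sigma_f f=f$, contradicting $e\neq f$. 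If instead $c=0$, then $f_2=0$ and $e_{(1)}f=\tfr12 f_{1/2}$; once one knows $f_{1/2}=0$, equivalently $\sigma_e f=f$, this gives $e_{(1)}f=0$ (the orthogonal case of (2)) and then $\sigma_e\sigma_f\sigma_e=\sigma_{\sigma_e f}=\sigma_f$, i.e.\ $[\sigma_e,\sigma_f]=1$, proving (3).

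Thus everything reduces to two VOA-theoretic facts that are invisible to the quadratic Griess-algebra identities used so far (these alone are consistent with a one-parameter family of values of $c$, since all my computations remain compatible for arbitrary $c$): that $(e\mymid f)\in\{0,2^{-5}\}$, which is part (1), and that $f_{1/2}=0$ whenever $(e\mymid f)=0$. I would obtain both from the structure of the two-generated subVOA $U=\la e,f\ra$. The dihedral group $\la\sigma_e,\sigma_f\ra$ acts on $U$, and the $\la e\ra$- and $\la f\ra$-module structure of $U$, governed by the Ising fusion rules (in particular $L(\shf,\shf)\fusion L(\shf,\shf)=L(\shf,0)$) and the simplicity relation \eqref{eq:1.1} for both $e$ and $f$, forces the $\la\sigma_e,\sigma_f\ra$-orbit of $f$ to be finite and the resulting dihedral Griess subalgebra to be one of the admissible configurations \emph{not} involving $L(\shf,\sfr{1}{16})$, which excludes every value of $c$ other than $0$ and $2^{-5}$ and kills $f_{1/2}$ in the orthogonal case. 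This step---in effect the classification of the two-generated (Norton--Sakuma) subalgebras restricted to the $\sigma$-type situation---is the only place where genuine representation theory rather than bilinear bookkeeping is required, and is where I expect all the difficulty to lie.
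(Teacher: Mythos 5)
Your bookkeeping is correct as far as it goes: the semisimple action of $e_{(1)}$ on $V_2$ with spectrum in $\{0,\shf,2\}$ and eigenspace $\C e$ for the eigenvalue $2$, the identity $e_{(1)}f=\tfrac14(f-\sigma_e f)+8(e\mymid f)\,e$, the symmetrization $\sigma_f e-\sigma_e f=(32c-1)(f-e)$, and the deduction of part (3) from \eqref{conj} via the braid relation are all sound. But the two statements you defer --- that $(e\mymid f)\in\{0,2^{-5}\}$, and that $f_{1/2}=0$ whenever $(e\mymid f)=0$ --- are not auxiliary lemmas supporting the proposition; they \emph{are} the proposition. As you observe yourself, the quadratic relations close up consistently for every value of $c$ (adding $f_{(1)}f=2f$ to your list only produces further compatible relations such as $(f_{1/2}\mymid f_{1/2})=4c(1-4c)$), so nothing in your argument excludes any value of $c$: part (1) and the orthogonal half of part (2) remain entirely unproved. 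What you have is a correct conditional reduction, not a proof.

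The proposed way of closing the gap does not work as stated, for two reasons. First, it is nearly circular: the classification of two-generated dihedral subalgebras (Norton--Sakuma) is itself proved by establishing exactly such inner-product restrictions, so asserting that ``the admissible configurations not involving $L(\shf,\os)$'' force $c\in\{0,2^{-5}\}$ is a restatement of the claim, not an argument --- you give no mechanism by which the fusion rule $L(\shf,\shf)\fusion L(\shf,\shf)=L(\shf,0)$ and relation \eqref{eq:1.1} kill the other values of $c$, nor any reason the $\la\sigma_e,\sigma_f\ra$-orbit of $f$ is finite (for a bad value of $c$ it could a priori be infinite). Second, it is mismatched in strength and hypotheses: Sakuma's classification is a far deeper and later theorem than the statement at hand, and its original proof assumes a positive-definite (unitary) invariant form --- precisely the hypothesis this paper is designed to avoid. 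Note that the paper offers no internal proof of Proposition \ref{prop:3.4}; it imports it from Miyamoto \cite{Mi96}, where the dichotomy is derived directly for the $\sigma$-type configuration from the representation theory of $L(\shf,0)$ --- using the fusion rules together with the fact that $f$ itself generates a copy of $L(\shf,0)$, not merely that it satisfies the quadratic Griess-algebra relations. A complete write-up would have to carry out an argument of that kind at exactly the point where you currently appeal to ``admissible configurations.''
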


\begin{df}[\cite{CH}]\label{df:2.1}
  A \emph{$3$-transposition group} is a pair $(G,I)$ of a group $G$ and a set of involutions $I$ of $G$  such that (1)~$G$ is generated by $I$;  (2)~$I$ is closed under the conjugation, i.e., if $a, b \in I$, $a^b=bab\in I$;  (3)~ the order of $ab$ is $\leq 3$ for any $a,b\in I$.
\end{df}

Therefore $(G_V,I_V)$, $I_V=\{ \sigma_e \mid e\in E_V\}$, is a $3$-transposition group by \eqref{conj} and Proposition \ref{prop:3.4}. 

For any  $3$-transposition group $(G,I)$, one can define a graph structure on $I$ (often called a diagram \cite[Section 3.2]{CH}) by the adjacency relation $a\sim b$ if and only if the order of $ab$ is $3$. In this case,  we have $a^b=b^a\in I$ and we set $a\circ b:=a^b=b^a$. 
Notice that $I$ is a connected graph if and only if $I$ is a single conjugacy class of $G$. We say $(G,I)$ is \emph{indecomposable or connected} if $I$ is a conjugacy class of $G$.
An indecomposable $(G,I)$ is called \emph{non-trivial} if $I$ is not a singleton, i.e.,
$G$ is not cyclic.

Using the graph structure and the relations in Proposition \ref{prop:3.4},  one can consider a commutative but non-associative algebra associated with $(G,I)$ as in \cite{Ma}. 

\begin{df} \label{df:2.4}
Let $B(G,I)=\oplus_{i\in I} \C x^i$ be the vector space spanned by a formal basis
  $\{ x^i\mid i\in I\}$ indexed by the set of involutions.
  We define a bilinear product and a bilinear form on $B(G,I)$ by
  \begin{equation}\label{eq:2.1}
    x^i\cdot x^j:=
    \begin{cases}
      2x^i & \mbox{if $i=j$},
      \\
      \dfr{1}{4} (x^i+x^j-x^{i\circ j}) & \mbox{if $i\sim j$},
      \\
      0 & \mbox{otherwise},
    \end{cases}
    ~~~~~~
    (x^i|x^j):=
    \begin{cases}
      ~\dfr{1}{4} & \mbox{if $i=j$},
      \medskip\\
      \dfr{1}{32} & \mbox{if $i \sim j$},
      \medskip\\
      ~\,0 & \mbox{otherwise}.
    \end{cases}
  \end{equation}
The bilinear form is an invariant form in the sense that $(a\cdot b| c)=(a|b\cdot c)$ for any $a,b,c\in B(G,I)$.  We also denote $B(G,I)$ by $B(G)$. 
The radical of the bilinear form on $B(G)$ forms an ideal. We
 call the quotient algebra of $B(G)$ by the radical of  the bilinear form the \emph{non-degenerate quotient}.
\end{df}

The group $G$ acts on $B(G)$; namely, there is a group homomorphism $\rho: G \to \aut\, B(G)$ 
such that 
\begin{equation}\label{eq:2.2}
 \rho(i)(x^j) = x^{i\circ j} \text{ if } i \sim j; \quad  
  \rho(i) (x^j)=x^j \text{ otherwise 
  for } i, j\in I.
\end{equation}
It is known that $\rho$ is injective if and only if $G$ is non-trivial and center-free
(cf.~\cite{Ma}).

\begin{df}\label{valency}
Suppose $G$ is indecomposable.
The \emph{valency} of the graph on $I$ is given by
$\abs{\{ j\in I \mid j\sim i\}}$  if it is finite, which is independent of the choice of $i\in I$. We denote this number by $k$ or $k_G$.
\end{df}
By \eqref{eq:2.1}, one can verify that
\[
  \l(\dsum_{i\in I} x^i\r) \cd x^j=\l(\dfr{k}4+2\r) x^j; 
\]
thus, 
\begin{equation}\label{eq:2.3}
  \w:=\dfr{8}{k+8}\dsum_{i\in I} x^i
\end{equation}
satisfies $\w\cdot  v=2v$ for any $v\in B(G)$ and $\w$ is twice the unit element  of
$B(G)$.
By the invariance, one also has $(x^i|\w)=(x^i|x^i)$ and $(\w|\w)=\dfr{2\abs{I}}{k+8}$.
\medskip

The next lemma is shown in Proposition 3.10 of \cite{JLY}.

\begin{lem}\label{lem:3.5}
  Let $V$ be a VOA satisfying Condition \ref{cond:1}. 
  Then $V_2$ is linearly spanned by $E_V$, i.e., $V_2=\C E_V$. 
\end{lem}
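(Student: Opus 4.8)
The plan is to show that $W:=\C E_V$ is already closed under the Griess product and that, since $V$ is generated by $E_V$, no weight-$2$ vectors can lie outside $W$.

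The first step is the Griess-closure of $W$. For distinct $e,f\in E_V$, Proposition \ref{prop:3.4}(2) gives $e_{(1)}f=0$ when $(e\mymid f)=0$ and $e_{(1)}f=\tfrac14(e+f-\sigma_e f)$ when $(e\mymid f)=2^{-5}$; since $\sigma_e\in\aut V$ sends Ising vectors to Ising vectors and every Ising vector of $V$ is of $\sigma$-type by Lemma \ref{lem:3.3}, we have $\sigma_e f\in E_V$, so $e_{(1)}f\in W$ in all cases. Together with $e_{(1)}e=2e$, this shows $W$ is a subalgebra of the Griess algebra $V_2$.

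Next, because $V$ is generated by $E_V$ (Condition \ref{cond:1}), it is spanned by monomials $e^1_{(n_1)}\cdots e^k_{(n_k)}\vac$ with $e^j\in E_V$, and such a monomial has weight $\sum_j(1-n_j)$, so it lies in $V_2$ exactly when $\sum_j(1-n_j)=2$. I would prove $V_2\subseteq W$ by induction on the length $k$. Using $a_{(n)}\vac=0$ for $n\ge 0$ together with the Borcherds commutator formula to move non-negative modes to the right, each monomial is rewritten, modulo strictly shorter monomials, as a combination of monomials all of whose modes are $\le-1$; but for weight $2$ the condition $\sum_j(1-n_j)=2$ with every $n_j\le-1$ forces $\sum_j(1-n_j)\ge 2k$, hence $k=1$ and the monomial is an Ising vector lying in $W$. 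The base case $k=1$ is exactly $e^1_{(-1)}\vac=e^1\in W$.

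The main obstacle is that the commutator formula $[a_{(m)},b_{(n)}]=\sum_{i\ge0}\binom{m}{i}(a_{(i)}b)_{(m+n-i)}$ introduces the contracted letters $e^1_{(i)}e^2$ into the shorter correction terms, so the induction runs over a growing alphabet and one must check that it never produces a weight-$2$ vector outside $W$. For weight-$2$ generators these contractions are forced by weight to vanish (for $i=2$ or $i\ge 4$), to lie in $\C\vac$ (for $i=3$), to equal the Griess product in $W$ (for $i=1$), or to lie in $V_3$ (for $i=0$), where skew-symmetry gives $e^1_{(0)}e^2+e^2_{(0)}e^1=L_{-1}(e^1_{(1)}e^2)\in L_{-1}W$; applying $[L_{-1},a_{(n)}]=-n\,a_{(n-1)}$ and $L_{-1}\vac=0$ then pulls these $L_{-1}$-descendants to the right and removes them. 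Thus the enlarged alphabet stays within $W\oplus\C\vac\oplus L_{-1}W$, and the Griess-closure from the first step is precisely what keeps the reduction inside $\C E_V$. The representative identities $e_{(2)}f_{(-2)}\vac=2\,e_{(1)}f$ and $e_{(3)}f_{(-3)}\vac=3\,e_{(1)}f$, immediate from these relations and $e_{(2)}f\in V_1=0$, already exhibit the mechanism.
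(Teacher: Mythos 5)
The paper offers no argument for this lemma at all (it is quoted from Proposition 3.10 of \cite{JLY}), so your proposal has to stand on its own, and it contains an essential gap at the $i=0$ contractions. Your preparatory facts are all correct: the Griess closure of $W=\C E_V$, the vanishing of $e_{(2)}f$ and of $e_{(i)}f$ for $i\ge 4$, the containment $e_{(3)}f\in\C\vac$, and the identity $e_{(0)}f+f_{(0)}e=L_{-1}(e_{(1)}f)$. But the commutator $[e_{(m)},f_{(n)}]$ produces the term $(e_{(0)}f)_{(m+n)}$, a mode of the \emph{individual} vector $e_{(0)}f$, not of the symmetric combination. Skew-symmetry only lets you trade $(e_{(0)}f)_{(s)}$ for $-(f_{(0)}e)_{(s)}$ plus a mode of an element of $W$; it says nothing about the antisymmetric part $e_{(0)}f-f_{(0)}e$, which is a genuinely new weight-$3$ vector. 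Hence your key assertion, that ``the enlarged alphabet stays within $W\oplus\C\vac\oplus L_{-1}W$,'' is unproved, and the induction never closes: once a letter $u=e_{(0)}f$ is admitted, later commutations produce its contractions $u_{(2)}g\in V_2$, $u_{(1)}g\in V_3$, $u_{(0)}g\in V_4$, and so on, and nothing in your argument shows that the weight-two outputs such as $(e_{(0)}f)_{(2)}g$ lie in $W$; rewriting $(e_{(0)}f)_{(s)}=[e_{(0)},f_{(s)}]$ via the iterate formula merely restores the original length and defeats the induction on $k$.

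The gap is not cosmetic, because the skew-symmetry trick is provably insufficient. Run your argument verbatim on the lattice VOA $V_{\Z\alpha}$ with $(\alpha\mymid\alpha)=2$ and generating set $\{e^{\alpha},e^{-\alpha}\}$ of weight-one vectors, where the roles of your $1$- and $3$-products are played by the $0$- and $1$-products: the $1$-product lies in $\C\vac$, all higher products vanish, and skew-symmetry gives $e^{\alpha}_{(0)}e^{-\alpha}+e^{-\alpha}_{(0)}e^{\alpha}=L_{-1}(e^{-\alpha}_{(1)}e^{\alpha})=0$, so every control you invoke holds; your reasoning would then conclude that the weight-one subspace equals $\C e^{\alpha}+\C e^{-\alpha}$. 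This is false: $e^{\alpha}_{(0)}e^{-\alpha}=\pm\alpha_{(-1)}\vac$ is a new vector and $\dim (V_{\Z\alpha})_1=3$. Exactly as in your situation, the object that escapes is the antisymmetric $0$-product, whose symmetrization is perfectly well controlled. So a correct proof of Lemma \ref{lem:3.5} must genuinely control the vectors $e_{(0)}f$ and their further products down to weight two; this, and not Griess-algebra closure plus skew-symmetry, is the real content of \cite[Proposition 3.10]{JLY}, which the paper invokes instead of reproving.
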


As a consequence, it is easy to see from the definition that 
\begin{lem}
There is a natural surjection from $B(G_V)$ to  the Griess algebra $V_2=\C E_V$, which preserves the product and the bilinear form.   
\end{lem}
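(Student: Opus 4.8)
The goal is to construct the natural surjection $B(G_V) \to V_2 = \C E_V$ and verify it respects both the algebra product and the bilinear form. The plan is to define the map on generators and then check that the defining relations \eqref{eq:2.1} for $B(G_V)$ are matched exactly by the structure relations in the Griess algebra recorded in Proposition \ref{prop:3.4}. Concretely, I would define a linear map $\varphi : B(G_V) \to V_2$ by sending the formal basis vector $x^{\sigma_e}$ to the Ising vector $e$ for each $e \in E_V$. This is well-defined on the level of vector spaces because $I_V = \{\sigma_e \mid e \in E_V\}$ is in bijection with a spanning set of $V_2$, and surjectivity is immediate from Lemma \ref{lem:3.5}, which asserts $V_2 = \C E_V$.

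Next I would check that $\varphi$ preserves the product. This amounts to comparing the three cases in the product formula of \eqref{eq:2.1} with the corresponding Griess-algebra computations. When $i = j$, i.e.\ $x^{\sigma_e} \cdot x^{\sigma_e} = 2 x^{\sigma_e}$, the image condition $e_{(1)} e = 2e$ is exactly the first Ising relation in \eqref{eq:1.1}. When $i \sim j$, the order of $\sigma_e \sigma_f$ is $3$, which by part (3) of Proposition \ref{prop:3.4} corresponds to $(e \mid f) = 2^{-5}$; in this case part (2) gives $e_{(1)} f = \tfrac{1}{4}(e + f - \sigma_e f)$, and since $\sigma_e \sim \sigma_f$ means $\sigma_e \circ \sigma_f = \sigma_{e} \sigma_f \sigma_e$ corresponds to the Ising vector $\sigma_e f$, this matches $\tfrac{1}{4}(x^i + x^j - x^{i \circ j})$ under $\varphi$. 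Finally, the orthogonal case $(e \mid f) = 0$ gives $e_{(1)} f = 0$ by part (2), matching the product $0$ in \eqref{eq:2.1}. The bilinear form is handled identically: the values $\tfrac{1}{4}$, $\tfrac{1}{32}$, $0$ in \eqref{eq:2.1} correspond respectively to $(e \mid e) = 1/4$ (the norm of an Ising vector of $\sigma$-type), to $(e \mid f) = 2^{-5} = 1/32$ in the adjacent case, and to $(e \mid f) = 0$ in the orthogonal case, precisely as catalogued in part (1) of Proposition \ref{prop:3.4}.

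The one point that genuinely requires care—and which I expect to be the main (if modest) obstacle—is the identification $\varphi(x^{\sigma_e \circ \sigma_f}) = \sigma_e f$ in the adjacent case. This rests on the compatibility between the graph operation $i \circ j = i^{j}$ on $I_V$ and the action of $\sigma$-involutions on Ising vectors, namely that $\sigma_e f$ is again an Ising vector of $\sigma$-type with $\sigma_{\sigma_e f} = \sigma_e \sigma_f \sigma_e = \sigma_e \circ \sigma_f$. This follows from the conjugation formula \eqref{conj} applied to $g = \sigma_e$, together with the symmetry $\sigma_e f = \sigma_f e$ from part (2) of Proposition \ref{prop:3.4}, which guarantees the operation is well-defined and symmetric as required by the definition of $\circ$. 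Once this bookkeeping is in place, the verification is a direct term-by-term comparison and the lemma follows.
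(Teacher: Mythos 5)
Your proposal is correct and takes essentially the same route the paper intends, since the paper offers no argument beyond ``it is easy to see from the definition'': one sends $x^{\sigma_e}\mapsto e$, gets surjectivity from Lemma \ref{lem:3.5}, and matches the structure constants \eqref{eq:2.1} of $B(G_V)$ against Proposition \ref{prop:3.4}, with the identification $\sigma_{\sigma_e f}=\sigma_e\sigma_f\sigma_e$ supplied by \eqref{conj}, exactly as you describe. The only caveat---shared by the paper's own statement---is that well-definedness of $x^{\sigma_e}\mapsto e$ tacitly uses injectivity of $e\mapsto\sigma_e$, which by Theorem \ref{thm:3.7}(2) is guaranteed when $G_V$ is connected and non-trivial.
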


\begin{df}\label{symplectic}
A subgroup $H$ of $G$ is called an $I$-subgroup if $H$ is generated by a subset of $I$.
%In this case,  $(H,H\cap I)$ is again a 3-transposition group.
A 3-transposition group $G$ is \emph{of symplectic type} if
for any two $I$-subgroups $H$ and $K$ such that $H\cong K\cong \mathfrak{S}_3$ and
$H\cap K \ne 1$, one has either $H=K$ or $\la H, K\ra\cong \mathfrak{S}_4$.
\end{df}

 3-transposition groups with trivial center are classified in \cite{CH} without the assumption of finiteness.
As a consequence of their classification, we have the following result.

\begin{thm}[\cite{CH}]\label{thm:lf}
  Every 3-transposition group is locally finite, that is, every finite subset of the
  group generates a finite subgroup.
\end{thm}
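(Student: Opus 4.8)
The plan is to reduce the statement to a finiteness assertion about the set of transpositions and then to isolate that assertion as the genuine combinatorial core. Since $I$ generates $G$, every element of $G$ is a finite product of members of $I$, so any finite subset of $G$ is contained in $\langle S\rangle$ for some finite $S\subseteq I$; hence it suffices to prove that $H:=\langle S\rangle$ is finite for each finite $S\subseteq I$. Put $D:=H\cap I$. Because $I$ is closed under conjugation in $G$ and $D\subseteq H$, the set $D$ is closed under conjugation by $H$, so $H$ acts on $D$ and we obtain a homomorphism $\phi\colon H\to \mathrm{Sym}(D)$. Its kernel consists of the elements of $H$ centralizing every member of $D\supseteq S$, i.e.\ centralizing a generating set of $H$; thus $\ker\phi=Z(H)$ and $H/Z(H)$ embeds into $\mathrm{Sym}(D)$.

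Assuming for the moment that $D$ is finite, I would conclude that $H$ is finite as follows. Finiteness of $D$ makes $H/Z(H)$ finite, so $Z(H)$ has finite index in $H$, and by Schur's theorem the commutator subgroup $[H,H]$ is then finite. On the other hand $H$ is finitely generated, so its abelianization $H/[H,H]$ is a finitely generated abelian group; since it is generated by the images of the involutions in $S$, it is an elementary abelian $2$-group of rank at most $|S|$, hence finite. Therefore $|H|=|[H,H]|\cdot|H/[H,H]|<\infty$. This step is clean and automatically accommodates a possibly large center.

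It remains to prove that $D=H\cap I$ is finite, which is the heart of the matter (and is essentially Fischer's finiteness theorem). I would argue by induction on $n=|S|$. For $n\le 2$ the group $\langle S\rangle$ is a quotient of $\Z/2$, $\Z/2\times\Z/2$, or $\mathfrak{S}_3$ by Proposition \ref{prop:3.4}, so $D$ is finite. For the inductive step, write $S=S'\sqcup\{a\}$ and $K=\langle S'\rangle$; by induction $K$ is finite and $D_K:=K\cap I$ is finite, and one must bound $D_H$. Here one uses that $b\mapsto aba$ fixes $b\in I$ when $a,b$ commute and sends $b$ to $a\circ b$ when $\abs{ab}=3$, so $D_H$ is the union of the $H$-conjugates of the finitely many transpositions in $D_K\cup\{a\}$. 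The crux is therefore to show that each such conjugacy class is finite, equivalently that the $H$-orbit $a^H$ meets only finitely many $K$-orbits. I expect this to be the main obstacle: it is precisely where the local analysis of $3$-transposition groups enters, through the short list of isomorphism types of subgroups generated by three transpositions, which is used to control how conjugation by $a$ permutes the finitely many $K$-orbits. Once that rank-$3$ bookkeeping yields $\abs{a^H}<\infty$, finiteness of $D$ follows, and the two reductions above complete the proof.
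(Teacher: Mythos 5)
You have not proved the theorem; you have reduced it to the theorem. The paper itself gives no argument here: Theorem \ref{thm:lf} is quoted from \cite{CH}, where it follows from the Cuypers--Hall classification of center-free $3$-transposition groups (carried out without any finiteness hypothesis). So a blind proof must supply the actual finiteness mechanism, and that is exactly the point at which your write-up stops. Your two reductions are correct and standard: (i) it suffices to treat $H=\langle S\rangle$ with $S\subseteq I$ finite; (ii) if $D=H\cap I$ is finite, then $H/Z(H)\hookrightarrow \mathrm{Sym}(D)$ (the kernel centralizes $S\subseteq D$, hence is central), Schur's theorem makes $[H,H]$ finite, and the abelianization is an elementary abelian $2$-group of rank at most $\abs{S}$, so $H$ is finite. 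But the statement that the set of transpositions in $H$ (or even a single class $a^H$) is finite \emph{is} Fischer's finiteness theorem, the entire content of the result, and you explicitly defer it: ``I expect this to be the main obstacle \dots\ Once that rank-$3$ bookkeeping yields $\abs{a^H}<\infty$'' is a statement of the problem, not a solution. The required argument is a genuinely nontrivial analysis (in Fischer's and Aschbacher's treatments it proceeds via the partition of the transpositions into those commuting and not commuting with $a$, the action of $C_H(a)$ on these sets, and an induction that controls $\abs{a^H}$ by the order of a smaller $3$-transposition group); nothing in your sketch substitutes for it, and no elementary manipulation of the relations in Proposition \ref{prop:3.4} will.

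A secondary, repairable flaw: your claim that $D_H$ is the union of the $H$-conjugates of $D_K\cup\{a\}$ is unjustified, since $H$ may contain transpositions of $G$ that are not $H$-conjugate into your generating data. This does not hurt the overall scheme, because for your step (ii) one may simply replace $D_H$ by the union $D'$ of the $H$-classes of the elements of $S$: the pair $(H,D')$ is again a $3$-transposition group, $D'\supseteq S$ still forces the kernel of $H\to\mathrm{Sym}(D')$ to be $Z(H)$, and finiteness of $D'$ suffices. But with or without this fix, the proposal remains a (correct) reduction of local finiteness to the finiteness of finitely many conjugacy classes of transpositions, together with an acknowledgement that the latter is unproved; as a proof of Theorem \ref{thm:lf} it has a genuine gap.
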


Finite 3-transposition groups of symplectic type were classified by J.I. Hall.

\begin{thm}[\cite{Ha1,Ha2}]\label{thm:hall}
An indecomposable finite center-free 3-transposition group of symplectic type is isomorphic to an extension of one of the groups
$\mathfrak{S}_3$,
$\mathfrak{S}_n$ $(n\geq 5)$,
$\mathrm{Sp}_{2n}(2)$ $(n\geq 3)$,
$\mathrm{O}_{2n}^+(2)$ $(n\geq 4)$
and $\mathrm{O}_{2n}^{-}(2)$ $(n\geq 3)$ by the direct sum of copies of the natural module.
\end{thm}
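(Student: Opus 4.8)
The plan is to translate the group-theoretic statement into the language of \emph{Fischer spaces} and then to recognize the resulting space as a classical geometry over $\mathbb{F}_2$. Given the indecomposable center-free $3$-transposition group $(G,I)$, I would take the point set to be $I$ and declare each triple $\{a,b,a\circ b\}$ with $a\sim b$ to be a line; Proposition \ref{prop:3.4} together with Definition \ref{df:2.1} guarantees that this is a partial linear space satisfying Fischer's triangle property, namely that a point off a line is collinear with none, exactly one, or all of its points. Indecomposability means that the collinearity graph is connected, so the space is irreducible. The decisive translation is that the \emph{symplectic type} hypothesis of Definition \ref{symplectic} forces every plane (the subspace generated by two lines meeting in a point) to be the dual affine plane of order $2$, that is, the Fischer space of $\mathfrak{S}_4$; in particular the affine plane of order $3$, which is the ``characteristic $3$'' plane responsible for the $\mathrm{O}_n(3)$, $\mathrm{U}_n(2)$ and Fischer sporadic families, cannot occur. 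This is the step that confines us to characteristic $2$.

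Next I would linearize. On the $\mathbb{F}_2$-space $W$ freely spanned by $I$, define the symmetric form $f(a,b)=1$ if $a\sim b$ and $f(a,b)=0$ if $a$ and $b$ commute, and let each $a\in I$ act by the transvection $t_a\colon v\mapsto v+f(a,v)\,a$. One checks that $a\mapsto t_a$ realizes $G$ as a group generated by a single conjugacy class of $\mathbb{F}_2$-transvections, and that the planes being of $\mathfrak{S}_4$-type makes $f$ behave as a symplectic form, possibly refined by a quadratic form of $+$ or $-$ type. The radical of $f$ is $G$-invariant and is exactly the ``natural module'' part of the statement: factoring out the successive radical layers yields a non-degenerate quotient on which $G$ acts faithfully modulo its center, and the center-free hypothesis pins this action down. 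Thus $G$ is displayed as an extension, by a direct sum of copies of the natural module, of a transvection group acting on a non-degenerate $\mathbb{F}_2$-space.

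I would then identify that transvection group, either by reconstructing the geometry directly or by invoking McLaughlin's classification of irreducible subgroups of $\mathrm{GL}(W)$ over $\mathbb{F}_2$ generated by transvections: the possibilities are $\mathrm{SL}(W)$, $\mathrm{Sp}(W)$, $\mathrm{O}^\pm(W)$, $\mathrm{SU}(W)$ and the symmetric groups $\mathfrak{S}_n$. The extra $3$-transposition constraint, that the product of any two transvections of the class have order at most $3$, eliminates $\mathrm{SL}(W)$ and $\mathrm{SU}(W)$, whose transvection classes generate subgroups containing elements of larger order, leaving precisely $\mathfrak{S}_n$, $\mathrm{Sp}_{2n}(2)$ and $\mathrm{O}_{2n}^\pm(2)$. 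The geometric $+/-$ invariant of the quadratic refinement distinguishes $\mathrm{O}^+$ from $\mathrm{O}^-$, while the degenerate ``permutation'' form singles out $\mathfrak{S}_n$; the stated rank ranges are exactly those for which these groups are genuinely indecomposable, non-trivial and not already on the list through a small-rank coincidence.

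The hard part will be the geometric recognition and the bookkeeping of the module extensions rather than the final identification of the simple group. Establishing rigorously that a symplectic-type Fischer space embeds in a non-degenerate $\mathbb{F}_2$-geometry with the transpositions mapping to transvections, and that this embedding is essentially unique, requires an induction on the rank together with a separate hands-on treatment of the low-rank and boundary cases, for instance the coincidences $\mathfrak{S}_3\cong\mathrm{Sp}_2(2)\cong\mathrm{O}_2^-(2)$ and the various isomorphisms among small orthogonal and symmetric groups. Equally delicate is showing that the filtration of $G$ by the radical layers contributes nothing beyond extensions by copies of the natural module and introduces no groups outside the listed families; this is precisely where the center-free and indecomposability hypotheses must be used most carefully.
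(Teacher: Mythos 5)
This theorem is not proved in the paper at all: it is quoted from Hall's classification \cite{Ha1,Ha2}, and the paper's ``proof'' is simply that citation. So the real question is whether your sketch would stand on its own as a proof of Hall's theorem, and it would not. What you have written is a fair outline of the strategy that the literature (Fischer, McLaughlin, Hall, Cuypers--Hall) actually follows --- pass to the Fischer space on $I$, use the symplectic-type hypothesis to force every plane to be the dual affine plane of order $2$ and thereby exclude the characteristic-$3$ families, linearize over $\mathbb{F}_2$ via transvections, and recognize the non-degenerate quotient --- but every decisive step is named and deferred rather than carried out. Concretely: (i) you never verify that $a\mapsto t_a$ is a well-defined homomorphism, that its image acts irreducibly on the non-degenerate quotient of $(W,f)$, or how its kernel interacts with $I$; (ii) the assertion that the radical filtration contributes only \emph{direct sums of copies of the natural module} --- rather than twisted modules, non-natural composition factors, or non-split pathologies --- is precisely the content of Hall's second paper and requires the cohomological and module-theoretic analysis you have not supplied; (iii) the small-rank coincidences and the exact rank ranges in the statement are asserted, not checked. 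You candidly flag (i)--(iii) as ``the hard part,'' which is honest, but it means the proposal is a plan for a proof whose missing parts are essentially the entirety of the two cited papers.

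Two smaller inaccuracies are worth fixing. First, you invoke Proposition \ref{prop:3.4} to obtain the Fischer space axioms, but that proposition concerns Ising vectors in a VOA of OZ-type; for an abstract 3-transposition group $(G,I)$ the partial linear space structure and the triangle property follow directly from Definition \ref{df:2.1} (the order-$\le 3$ condition and closure of $I$ under conjugation), with no VOA input. Second, over $\mathbb{F}_2$ the unitary groups are not generated by $\mathbb{F}_2$-transvections (they are $\mathbb{F}_4$-transvection groups), so the McLaughlin-style list you should quote in characteristic $2$ is $\mathrm{SL}_n(2)$, $\mathrm{Sp}_{2n}(2)$, $\mathrm{O}_{2n}^{\pm}(2)$ and $\mathfrak{S}_n$; and the elimination of $\mathrm{SL}_n(2)$ by the 3-transposition condition, while correct, needs an actual argument (for instance, exhibiting two transvections in $\mathrm{SL}_3(2)$ whose product has order $4$).
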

Here the natural module is isomorphic to $2^{2n}$ for $\mathrm{O}_{2n}^{\pm}(2)$ or
$\mathrm{Sp}_{2n}(2)$.
By permutations of the coordinates,
$\mathfrak{S}_n$ canonically acts on the space $\mathbb{F}_2^{n}$
where $\mathbb{F}_2$ is the field of two elements.
This action induces an embedding of $\mathfrak{S}_n$ into $\mathrm{Sp}_{n-1}(2)$
if $n$ is odd and into $\mathrm{Sp}_{n-2}(2)$ if $n$ is even,
so we define the natural module $F_n$ for $\mathfrak{S}_n$ to be $2^{n-1}$ if $n$ is odd
and %to be
$2^{n-2}$ if $n$ is even.
Note that $\mathfrak{S}_4\cong 2^2{:}\mathfrak{S}_3= F_3{:}\mathfrak{S}_3$ so that
$\mathfrak{S}_4$ is also included in the theorem above.

\medskip

Next we consider some finite generating subset of $E_V$.  
Let  $\mathcal{E}$ be a subset of $E_V$ such that $\sigma_e \mathcal{E} = \mathcal{E}$ for all $e\in \mathcal{E}$. Then $G_\mathcal{E}=\la \sigma_e \mid e\in \mathcal{E}\ra$
is a 3-transposition subgroup of $G_V$ and
the linear span $\C \mathcal{E}$ forms a subalgebra of the Griess algebra of $V$
which is a homomorphic image of $B(G_\mathcal{E})$. Such a subset $\mathcal{E}$ is called a $\sigma$-closed subset of $E_V$.

Since $V_2=\C E_V$, we can take a linear basis $A\subset E_V$ of $V_2$.
Clearly, $A$ is a finite set. 
Then $G_A=\la \sigma_e\mid e\in A\ra$ is a finite 3-transposition subgroup of $G_V$ by Theorem \ref{thm:lf}.
Set $\mathcal{E}=G_A.A=\{ ga \in E_V \mid g\in G_A,~ a\in A\}$.
Then $\mathcal{E}$ is a finite $\sigma$-closed set, $E_V\subset \C \mathcal{E}$
and $G_{\mathcal{E}}=G_A$ by \eqref{conj}.
Therefore, we have the following lemma.

\begin{lem}\label{lem:3.6}
  If $V$ satisfies Condition \ref{cond:1} then there exists a finite $\sigma$-closed
  subset $\mathcal{E}$ of $E_V$ such that $V=\la \mathcal{E}\ra$ and the Griess algebra of
  $V$ is linearly spanned by $\mathcal{E}$ which is isomorphic to a homomorphic image of $B(G_\mathcal{E})$ associated with a finite 3-transposition
  group $G_\mathcal{E}=\la \sigma_e \mid e\in \mathcal{E}\ra$. Such a subset $\mathcal{E}$ is called a \emph{finite $\sigma$-closed generating set}. 
\end{lem}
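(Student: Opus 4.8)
The strategy is to combine the finite-dimensionality of the Griess algebra with the local finiteness of 3-transposition groups (Theorem \ref{thm:lf}); indeed the discussion preceding the statement already contains the essential construction, and the task is to verify that it has all the asserted properties. First I would use Lemma \ref{lem:3.5} to write $V_2=\C E_V$ and, since $\dim V_2<\infty$, choose a finite linear basis $A\subset E_V$ of $V_2$. Set $G_A=\la \sigma_a\mid a\in A\ra$ and define the candidate generating set to be the orbit $\mathcal{E}:=G_A.A=\{ga\in E_V\mid g\in G_A,\,a\in A\}$.

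The first substantive point is finiteness: because $A$ is finite, $G_A$ is a finitely generated 3-transposition group, so Theorem \ref{thm:lf} forces $G_A$ to be finite, whence $\mathcal{E}=G_A.A$ is a finite subset of $E_V$. Next I would verify $\sigma$-closedness. For $e=ha\in\mathcal{E}$ with $h\in G_A$ and $a\in A$, the conjugation identity \eqref{conj} gives $\sigma_e=h\sigma_a h^{-1}\in G_A$; hence for any $e'=gb\in\mathcal{E}$ we have $\sigma_e e'=(\sigma_e g)b\in G_A.A=\mathcal{E}$, so $\sigma_e\mathcal{E}\subseteq\mathcal{E}$, and equality follows since $\sigma_e$ is an involution. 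The same computation shows $\sigma_e\in G_A$ for every $e\in\mathcal{E}$, giving $G_{\mathcal{E}}\subseteq G_A$; the reverse inclusion is immediate from $A\subseteq\mathcal{E}$, so $G_{\mathcal{E}}=G_A$ is the desired finite 3-transposition group.

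It then remains to see that $\mathcal{E}$ generates $V$ and spans the Griess algebra. Since $A$ already spans $V_2$ and $E_V\subseteq V_2$, we get $E_V\subseteq\C A\subseteq\C\mathcal{E}\subseteq V_2$, so $\C\mathcal{E}=V_2$. In particular $E_V\subseteq\C\mathcal{E}\subseteq\la\mathcal{E}\ra$, and combined with Condition \ref{cond:1} (which gives $V=\la E_V\ra$ via Lemma \ref{lem:3.3}) this yields $\la E_V\ra\subseteq\la\mathcal{E}\ra$, i.e.\ $V=\la\mathcal{E}\ra$. Finally, that $\C\mathcal{E}$ is a homomorphic image of $B(G_{\mathcal{E}})$ is exactly the statement already established for $\sigma$-closed sets: the structure constants of $B(G_{\mathcal{E}})$ in Definition \ref{df:2.4} match the Griess-algebra products and inner products of Proposition \ref{prop:3.4}, so the natural epimorphism $B(G_V)\to V_2$ restricts to an algebra- and form-preserving surjection $B(G_{\mathcal{E}})\to\C\mathcal{E}$.

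The only genuinely nontrivial ingredient is the finiteness of $G_A$, which is supplied entirely by the cited local finiteness theorem; the rest is orbit-closure bookkeeping. The one place to be slightly careful is the passage from $B(G_{\mathcal{E}})$, whose basis is indexed by the involutions $I_{\mathcal{E}}:=\{\sigma_e\mid e\in\mathcal{E}\}$, to $\C\mathcal{E}$: one should note that $I_{\mathcal{E}}$ is closed under $\circ$ precisely because $\mathcal{E}$ is $\sigma$-closed, so that $B(G_{\mathcal{E}})$ sits inside $B(G_V)$ as a subalgebra and the required surjection is simply the restriction of the map recorded above.
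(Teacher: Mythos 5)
Your proposal is correct and is essentially identical to the paper's own argument, which appears in the two paragraphs immediately preceding the lemma: take a finite linear basis $A\subset E_V$ of $V_2$ (via Lemma \ref{lem:3.5}), invoke the local finiteness of 3-transposition groups (Theorem \ref{thm:lf}) to get $G_A$ finite, and set $\mathcal{E}=G_A.A$, with $\sigma$-closedness, $G_{\mathcal{E}}=G_A$, and the surjection $B(G_{\mathcal{E}})\to\C\mathcal{E}$ following from \eqref{conj} and Proposition \ref{prop:3.4}. You merely spell out the routine verifications (orbit closure, generation, matching of structure constants) that the paper leaves implicit, so there is nothing to correct.
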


\begin{rem} \label{indec}
Let $E_V= E_1\perp E_2$ be an orthogonal partition. Then both $E_1$ and $E_2$ are $\sigma$-closed subsets of $E_V$ and we have decompositions
$V=\la E_V\ra\cong \la E_1\ra\tensor \la E_2\ra$ and $G_V=G_1\times G_2$ where
$G_i=\la \sigma_e \mid E_i\ra$, $i=1,2$.
Therefore, in order to classify VOAs satisfying Condition \ref{cond:1}, we may assume that
$E_V$ is indecomposable, that is, any orthogonal decomposition of $E_V$ is trivial.
\end{rem}

Next we recall several known results for VOAs satisfying Condition \ref{cond:1}.

\begin{thm}[\cite{Ma,JL,JLY}]\label{thm:3.7}
  Let $V$ be a VOA satisfying Condition \ref{cond:1}.
  \\
  \textup{(1)}~$G_V=\la \sigma_e \mid e\in E_V\ra$ is a 3-transposition group of symplectic type.
  \\
  \textup{(2)}~If $G_V$ is connected and non-trivial then the map
  $E_V\ni e\mapsto \sigma_e \in G_V$ is injective.
  \\
  \textup{(3)}~The Griess algebra of $V$ is linearly spanned by $E_V$ and
  is isomorphic to a homomorphic image of the algebra $B(G_V)$.
  \\
  \textup{(4)}~If $V$ is simple then the Griess algebra of $V$ is isomorphic to
  the non-degenerate quotient of the algebra $B(G_V)$ and the VOA structure of $V$ is uniquely determined by its Griess algebra.
\end{thm}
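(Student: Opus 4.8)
That $(G_V,I_V)$ with $I_V=\{\sigma_e\mid e\in E_V\}$ is a $3$-transposition group is immediate from Definition \ref{df:2.1}: $I_V$ generates $G_V$ by construction; $I_V$ is closed under conjugation by \eqref{conj}, since being an Ising vector of $\sigma$-type is preserved by $\aut V$, so $gE_V=E_V$; and $\abs{\sigma_e\sigma_f}\le 3$ by Proposition \ref{prop:3.4}(3). The real content is that $G_V$ is of symplectic type, which is a condition on rank-$3$ configurations. If $H,K\cong\mathfrak{S}_3$ are $I$-subgroups with $H\cap K\ne 1$, they share an involution $\sigma_c$ and are generated together by three Ising vectors $c,d,e$ with $c\sim d$ and $c\sim e$, so $\la H,K\ra=\la\sigma_c,\sigma_d,\sigma_e\ra$. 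One has to show this group is $\cong\mathfrak{S}_3$ (i.e.\ $H=K$) or $\cong\mathfrak{S}_4$, ruling out the other rank-$3$ $3$-transposition groups such as $3^{1+2}{:}2$. I would do this by analysing the subVOA $\la c,d,e\ra$: each dihedral subVOA $\la c,d\ra$, $\la c,e\ra$ generated by two $\sigma$-type Ising vectors is of restricted type (no $L(\shf,\os)$ occurs), and matching their Griess products inside $V_2$ via Proposition \ref{prop:3.4} forces the forbidden triples to collapse. This is the symplectic-type theorem of \cite{Ma}, building on \cite{Mi96}.

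\textbf{Part (2).} Assume $G_V$ connected and non-trivial and suppose $\sigma_e=\sigma_f$ with $e\ne f$. Since the graph on $I_V$ is connected with at least two vertices, $\sigma_e$ has a neighbour: there is $g\in E_V$ with $(e\mymid g)=2^{-5}$, hence $\abs{\sigma_e\sigma_g}=3$ by Proposition \ref{prop:3.4}(3). If $(f\mymid g)=0$ then $[\sigma_f,\sigma_g]=1$ by Proposition \ref{prop:3.4}(3); but $\sigma_f=\sigma_e$ gives $[\sigma_e,\sigma_g]=1$, contradicting $\abs{\sigma_e\sigma_g}=3$. Therefore $(f\mymid g)=2^{-5}$, so $f\sim g$, and Proposition \ref{prop:3.4}(2) applied to both pairs yields $\sigma_g e=\sigma_e g=\sigma_f g=\sigma_g f$, whence $e=f$, a contradiction. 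Thus $e\mapsto\sigma_e$ is injective.

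\textbf{Part (3).} The spanning $V_2=\C E_V$ is Lemma \ref{lem:3.5}. Define $\phi\colon B(G_V)\to V_2$ by $x^{\sigma_e}\mapsto e$; this is well defined in the connected non-trivial case by (2), and in general after choosing representatives and decomposing $E_V$ into its orthogonal (equivalently, tensor-factor) components as in Remark \ref{indec}. The map $\phi$ is an algebra homomorphism because the defining relations \eqref{eq:2.1} reproduce Proposition \ref{prop:3.4}: when $\sigma_e\sim\sigma_f$ one has $\sigma_e\circ\sigma_f=\sigma_e\sigma_f\sigma_e=\sigma_{\sigma_e f}$ by \eqref{conj} together with $\sigma_e f=\sigma_f e$, so $\phi\bigl(\tfr14(x^{\sigma_e}+x^{\sigma_f}-x^{\sigma_e\circ\sigma_f})\bigr)=\tfr14(e+f-\sigma_e f)=e_{(1)}f$, while the orthogonal and diagonal cases give $0\mapsto 0$ and $2x^{\sigma_e}\mapsto 2e$. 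The same dictionary shows $\phi$ preserves the bilinear forms, since $(e\mymid f)\in\{0,2^{-5}\}$ matches the values $\{0,\tfr1{32}\}$ in \eqref{eq:2.1}. Surjectivity is again Lemma \ref{lem:3.5}.

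\textbf{Part (4) and the main obstacle.} Suppose $V$ is simple. Being of OZ-type, its invariant form is non-degenerate \cite{Li} and, respecting the grading, restricts non-degenerately to $V_2$. For $a\in B(G_V)$ we then have $\phi(a)=0$ iff $(\phi(a)\mymid\phi(b))=0$ for all $b$ (by surjectivity and non-degeneracy on $V_2$), iff $(a\mymid b)=0$ for all $b$ (as $\phi$ preserves forms); hence $\ker\phi$ equals the radical of the form on $B(G_V)$, and $V_2$ is its non-degenerate quotient. The remaining assertion, that the VOA structure of a simple OZ-type $V$ generated by $\sigma$-type Ising vectors is determined by its Griess algebra, is the reconstruction theorem of \cite{JLY}, which I would invoke directly. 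The two genuinely hard inputs are thus the exclusion of the non-symplectic rank-$3$ configurations in (1), which requires explicit computation inside $\la c,d,e\ra$, and this reconstruction statement in (4), whose proof in \cite{JLY} is by far the deepest ingredient.
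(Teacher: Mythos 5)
Your proposal is correct, but it cannot be compared step-by-step with the paper's proof for a simple reason: the paper does not prove Theorem \ref{thm:3.7} at all. Immediately after the statement it attributes (1) to Proposition 1 of \cite{Ma} and Proposition 3.15 of \cite{JLY}, (3) to Lemma 3.5 of \cite{JL} and Proposition 3.9 of \cite{JLY}, and (4) to Theorem 3.13 of \cite{JLY}; part (2) is likewise covered by the cited sources. What you do differently is to supply actual arguments for the parts that are elementary consequences of material already in Section 2, and these arguments are sound: your proof of (2) (a common neighbour $g$ of $\sigma_e=\sigma_f$ gives $\sigma_g e=\sigma_e g=\sigma_f g=\sigma_g f$, hence $e=f$) is correct; your construction of $\phi:B(G_V)\to V_2$, $x^{\sigma_e}\mapsto e$, in (3) correctly matches Definition \ref{df:2.4} against Proposition \ref{prop:3.4} via \eqref{conj}; and your identification of $\ker\phi$ with the radical of the form on $B(G_V)$ in (4), using surjectivity, form-preservation, and non-degeneracy of the invariant form on a simple OZ-type VOA \cite{Li}, is exactly the right mechanism. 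You also correctly isolate the two inputs that genuinely require the references --- the symplectic-type property in (1) and the reconstruction/uniqueness statement in (4) --- and defer them to \cite{Ma} and \cite{JLY}, which is precisely what the paper does for \emph{all} four parts. One caveat you should flag rather than fold into Remark \ref{indec}: your map $x^{\sigma_e}\mapsto e$ requires $e\mapsto \sigma_e$ to be a bijection onto $I_V$, and outside the connected non-trivial case this can fail badly --- for instance in $L(\shf,0)\tensor L(\shf,0)$ both Ising vectors have $\sigma_e=\id$, so $I_V$ contains no involutions and $B(G_V)$ as literally defined in Definition \ref{df:2.4} is too small to cover $V_2$. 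Decomposing $E_V$ into indecomposable pieces does not remove this; the trivial pieces must be handled separately, as the degenerate pair $(K(A_1,2),\id)$ in Theorem \ref{thm:main}. This is a defect of the theorem's formulation as much as of your proof, but a careful write-up should note it.
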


The statement (1) of Theorem \ref{thm:3.7}  follows from Proposition 1 of \cite{Ma}
and Proposition 3.15 of \cite{JLY},
(3) follows from Lemma 3.5 of \cite{JL} and Proposition 3.9 of \cite{JLY}
and (4) follows from Theorem 3.13 of \cite{JLY}.

\medskip

The following result is also proved in \cite{Ma}. 
\begin{lem}[\cite{Ma}]\label{lem:6.1}
  Let $(G,I)$ be a finite center-free indecomposable 3-transposition group of symplectic type
  with $\abs{I}>1$ and $B(G)=\oplus_{i\in I}\C x^i$ the algebra associated with $(G,I)$.
  Then the bilinear form on the real part $\oplus_{i\in I}\R x^i$ of $B(G)$ is
  positive semidefinite if and only if $G$ is one of those listed in Theorem \ref{thm:main}.
  More precisely, the bilinear form is positive definite if and only if $G$ is one of the following.
  \begin{equation}\label{eq:6.1}
    \mathfrak{S}_n~(n\geq 3),~~~
    F_n{:}\mathfrak{S}_{n}~(n\geq 4),~~~
    \mathrm{O}_{6}^-(2), ~~~
    \mathrm{Sp}_{6}(2), ~~~
    \mathrm{O}_{8}^+(2),
  \end{equation}
  where $F_n$ denotes the natural module for $\mathfrak{S}_n$.
  It is positive semidefinite and singular if and only if $G$ is one of the following.
  \begin{equation}\label{eq:6.2}
    F^2_n{:}\mathfrak{S}_n~(n\geq 4),~~
    2^6{:}\mathrm{O}_6^-(2),~~
    2^6{:}\mathrm{Sp}_6(2),~~
    \mathrm{O}_8^-(2),~~
    \mathrm{Sp}_8(2),~~
    2^8{:}\mathrm{O}_8^+(2),~~
    \mathrm{O}_{10}^+(2).
  \end{equation}
\end{lem}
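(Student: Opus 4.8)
The plan is to convert the positivity question into a single spectral inequality for the diagram and then run through Hall's classification in Theorem \ref{thm:hall}. Fix an ordering of the basis $\{x^i\mid i\in I\}$ and let $A=(A_{ij})$ be the adjacency matrix of the diagram on $I$, i.e.\ $A_{ij}=1$ if $i\sim j$ and $A_{ij}=0$ otherwise, with zero diagonal. By the formula \eqref{eq:2.1} for the form, the Gram matrix of the bilinear form on $\oplus_{i\in I}\R x^i$ is
\[
  M=\dfr14\,\mathrm{Id}+\dfr1{32}A=\dfr1{32}\bigl(8\,\mathrm{Id}+A\bigr).
\]
Hence $M$ is positive semidefinite if and only if the least eigenvalue satisfies $\lam_{\min}(A)\geq -8$; it is positive definite precisely when $\lam_{\min}(A)>-8$, and positive semidefinite but singular precisely when $\lam_{\min}(A)=-8$. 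Everything therefore reduces to computing $\lam_{\min}(A)$ for each indecomposable finite center-free $3$-transposition group of symplectic type.

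\textbf{Base groups.} By Theorem \ref{thm:hall}, $G$ is an extension $R{:}G_0$ of one of $\mathfrak{S}_3$, $\mathfrak{S}_n$ $(n\geq5)$, $\mathrm{Sp}_{2n}(2)$, $\mathrm{O}_{2n}^+(2)$, $\mathrm{O}_{2n}^-(2)$ by a direct sum $R$ of copies of the natural module. I first take $R=0$. For $G_0=\mathfrak{S}_n$ the diagram is the triangular graph $T(n)=L(K_n)$ (transpositions being adjacent iff they share a point), whose least eigenvalue is the classical value $-2$ (and $-1$ for $n=3$). For the classical groups over $\mathbb{F}_2$ the $3$-transpositions are the symplectic, resp.\ orthogonal, transvections $t_v:x\mapsto x+B(x,v)v$ attached to the nonzero, resp.\ nonsingular $(Q(v)=1)$, vectors $v$ of the underlying space $(V,B,Q)$ with $\dim V=2n$, and $i\sim j$ holds exactly when the corresponding vectors $u,v$ satisfy $B(u,v)=1$. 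Writing $A=\tfr12(J-H)$ with $J$ the all-ones matrix and $H_{uv}=(-1)^{B(u,v)}$, and computing the Fourier transform over $\mathbb{F}_2$ of the indicator $\mathbbm{1}_W$ of the vertex set $W$, completion of the square reduces everything to the single quadratic Gauss sum $\sum_{w\in V}(-1)^{Q(w)}=\varepsilon 2^{n}$; the resulting three values of $\widehat{\mathbbm{1}_W}$ certify strong regularity and give the spectra
\[
  \lam_{\min}\bigl(\mathrm{Sp}_{2n}(2)\bigr)=-2^{n-1},\qquad
  \lam_{\min}\bigl(\mathrm{O}_{2n}^-(2)\bigr)=-2^{n-1},\qquad
  \lam_{\min}\bigl(\mathrm{O}_{2n}^+(2)\bigr)=-2^{n-2}.
\]
Imposing $\lam_{\min}\geq-8$ picks out exactly $\mathfrak{S}_n$ (all $n$), $\mathrm{Sp}_6(2)$, $\mathrm{O}_6^-(2)$, $\mathrm{O}_8^+(2)$ with strict inequality (positive definite), together with the boundary groups $\mathrm{Sp}_8(2)$, $\mathrm{O}_8^-(2)$, $\mathrm{O}_{10}^+(2)$ where $\lam_{\min}=-8$ (singular); all groups of higher rank are excluded.

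\textbf{Module extensions.} For $R\neq0$ the subgroup $R$ is normal in $G=R{:}G_0$, so $A$ commutes with the conjugation action of $R$ on $\C\bar I$, where $\bar I$ is the set of $3$-transpositions of $G$; since $R$ is an elementary abelian $2$-group, $A$ preserves each character eigenspace in the decomposition $\C\bar I=\oplus_{\chi\in\hat R}(\C\bar I)_\chi$. The diagram of $R{:}G_0$ is a fibered cover of that of $G_0$ with fibers $C_R(t)$, and $A$ splits accordingly into twisted adjacency operators $A_\chi$: the trivial character returns the base spectrum, while each nontrivial $\chi$ contributes a signed adjacency matrix on the base whose least eigenvalue is again computed by the Fourier method above from the module pairing. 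The conclusion is that $\lam_{\min}$ strictly decreases with each adjoined copy of the natural module: one copy lowers $-4$ to $-8$ for the bases $\mathrm{Sp}_6(2)$, $\mathrm{O}_6^-(2)$, $\mathrm{O}_8^+(2)$, producing the singular groups $2^6{:}\mathrm{Sp}_6(2)$, $2^6{:}\mathrm{O}_6^-(2)$, $2^8{:}\mathrm{O}_8^+(2)$; for $\mathfrak{S}_n$ one copy $F_n{:}\mathfrak{S}_n$ stays strictly above $-8$ (positive definite) while two copies $F_n^2{:}\mathfrak{S}_n$ reach $-8$ (singular); and any further copy, or any extension of a base already at $-8$, forces $\lam_{\min}<-8$. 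Collecting these values reproduces the positive definite list \eqref{eq:6.1} and the singular list \eqref{eq:6.2}.

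\textbf{Main obstacle.} The substantial part is the module-extension step: one has to describe the diagram of $R{:}G_0$ as a cover of the base diagram with fibers $C_R(t)$, identify the twisted operators $A_\chi$ explicitly in terms of the module pairing, and establish the monotone descent of $\lam_{\min}$ to $-8$; the base computations, by contrast, are a uniform finite-geometry calculation resting on the single Gauss-sum evaluation over $\mathbb{F}_2$. Once $\lam_{\min}(A)$ is known in every case, comparison with $-8$ simultaneously produces the positive definite list \eqref{eq:6.1}, the positive semidefinite singular list \eqref{eq:6.2}, and the exclusion of all remaining symplectic-type groups, which is exactly the assertion of the lemma.
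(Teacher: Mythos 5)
Your overall strategy is the right one, and it is in fact the same one the paper relies on: the paper does not reprove this lemma (it is quoted from \cite{Ma}), but precisely your Gram-matrix reduction appears in its Section 5, where the Gram matrix of $B(G)$ is written as $\frac{1}{32}(8\mathbb{I}+A)$ in \eqref{eq:6.4} and the least eigenvalues of the base diagrams are read off from the strongly regular graph parameters in Table 5. Your base-group computations are correct: the values $-2$ for $\mathfrak{S}_n$ (triangular graph), $-2^{n-1}$ for $\mathrm{Sp}_{2n}(2)$ and $\mathrm{O}_{2n}^-(2)$, and $-2^{n-2}$ for $\mathrm{O}_{2n}^+(2)$ agree with that table, and comparison with $-8$ then sorts the base groups exactly as in \eqref{eq:6.1} and \eqref{eq:6.2}.

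The gap is in the module-extension step, which you yourself single out as the main obstacle; as described, it would not go through. First, the fiber of the covering $I(R{:}G_0)\to I(G_0)$ over a transposition $t$ is not $C_R(t)$: the $3$-transpositions above $t$ are the $R$-conjugates $\{(r-t(r))\,t \mid r\in R\}=[R,t]\,t$, and since $t$ acts on each copy of the natural module as a transvection, $\abs{[R,t]\,t}=2^m$; this is what matches the counts in Tables 1 and 2, where each adjoined copy doubles $\abs{I}$. Second, the spectral mechanism you propose is not what happens. Two lifts $rt$ and $r't'$ are adjacent if and only if $t\sim t'$ in the base: if $t=t'$ their product lies in the elementary abelian group $R$ and so has order at most $2$, while if $t\neq t'$ the product projects onto $tt'$, and the $3$-transposition axiom forces its order to equal the order of $tt'$. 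Hence the diagram of $R{:}G_0$ is the blow-up of the base diagram by cocliques of size $2^m$, i.e. $A=\mathbb{J}_{2^m}\otimes A_0$ --- the fact the paper records immediately after Table 5, citing \cite{Ma}. In particular, the nontrivial characters of $R$ contribute only the kernel of $A$, not ``signed adjacency matrices'' requiring a Fourier computation, and the full spectrum is $2^m\cdot\mathrm{spec}(A_0)$ together with $0$, so $\lam_{\min}$ exactly doubles with each adjoined copy. That tensor identity is what turns your asserted numerology ($-2\to-4\to-8$ for $\mathfrak{S}_n$; $-4\to-8$ for $\mathrm{Sp}_6(2)$, $\mathrm{O}_6^-(2)$, $\mathrm{O}_8^+(2)$; everything further dropping below $-8$) into a proof. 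As written, your ``monotone descent'' conclusion is stated rather than derived, and the construction offered to justify it (fibers $C_R(t)$, twisted operators $A_\chi$ on the base) is incorrect in both of these details, so the crucial step remains unproved.
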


By this lemma, if $G$ is one of the groups in \eqref{eq:6.1}, $B(G)$ is non-degenerate and isomorphic to its non-degenerate quotient. 
%For the discussions in the next sections, we introduce the following notion.
%By Lemma \ref{lem:3.6}, there exists a finite $\sigma$-closed subset $\mathcal{E}$  of $E_V$ such that $V=\la \mathcal{E}\ra$
%and $V_2=\C \mathcal{E}$.
\medskip

Let $V$ be a VOA satisfying Condition \ref{cond:1} and $\mathcal{E}\subset E_V$ a finite $\sigma$-closed generating set. 

\begin{df}\label{df:sigma}
 A module $M$ over $V$ is said to be of \emph{$\sigma$-type} with respect to $\mathcal{E}$
  if any irreducible $\la e\ra$-submodule of $M$ is isomorphic to either $L(\shf,0)$ or
  $L(\shf,\shf)$ for every $e\in \mathcal{E}$.
\end{df}

%Set $G=\la \sigma_e \mid e\in \mathcal{E}\ra$.
%Then $G$ is a finite 3-transposition group by Proposition \ref{prop:3.4}.
%Suppose $G$ is indecomposable.

\begin{lem}\label{lem:3.13} %Let $V$ be a VOA satisfying Condition \ref{cond:1}.
  Let $M=\oplus_{i\geq 0} M(i)$ be an irreducible $\N$-gradable module over $V$ of $\sigma$-type with respect to a finite
  $\sigma$-closed subset $\mathcal{E}$.
  Suppose $G=\la \sigma_e\mid e\in \mathcal{E}\ra$ is indecomposable.
  Then for any $e$, $f\in \mathcal{E}$, we have
  \[
    \mathrm{Tr}_{M(i)}e_{(1)} = \mathrm{Tr}_{M(i)}f_{(1)}.
  \]
\end{lem}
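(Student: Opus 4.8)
The plan is to exploit the indecomposability of $G=\la \sigma_e\mid e\in \mathcal{E}\ra$, which by definition means that $I=\{\sigma_e\mid e\in\mathcal{E}\}$ forms a single conjugacy class, so the graph on $\mathcal{E}$ is connected. Since the trace is a linear functional on each finite-dimensional graded piece $M(i)$, it suffices to show that $\tr_{M(i)}\, e_{(1)}=\tr_{M(i)}\, f_{(1)}$ whenever $e$ and $f$ are \emph{adjacent} in the graph, i.e.\ $(e\mymid f)=2^{-5}$; connectedness then propagates the equality to all pairs in $\mathcal{E}$. For adjacent $e,f$, Proposition \ref{prop:3.4}(2) gives $\sigma_e f=\sigma_f e=:g\in\mathcal{E}$, and by \eqref{conj} the $\sigma$-involution associated with $g$ is $\sigma_g=\sigma_e\sigma_f\sigma_e$, so that $\{e,f,g\}$ spans an $\mathfrak{S}_3$-type triangle with the three idempotents permuted cyclically.

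The key step is to write down a conjugation relating $e_{(1)}$ and $f_{(1)}$ on $M$. The involution $\sigma_f\in G$ acts as an automorphism of $V$, hence by the $\sigma$-type hypothesis it lifts to an operator $\phi$ on the irreducible module $M$ intertwining the $V$-action up to the automorphism $\sigma_f$; concretely, $\phi\, v_{(n)}\phi^{-1}=(\sigma_f v)_{(n)}$ for $v\in V$, $n\in\Z$. Applying this with $v=e$ gives $\phi\, e_{(1)}\phi^{-1}=(\sigma_f e)_{(1)}=g_{(1)}$ on $M$. Because each $\sigma_f$ preserves the $\N$-grading of $M$ (it acts as $\pm1$ on the $\la e\ra$-isotypic components, which respect the conformal weight), $\phi$ restricts to an invertible operator on each finite-dimensional $M(i)$, and therefore
\[
  \tr_{M(i)}\, e_{(1)}=\tr_{M(i)}\,\phi\, e_{(1)}\phi^{-1}=\tr_{M(i)}\, g_{(1)}.
\]
Thus $\tr_{M(i)}\, e_{(1)}=\tr_{M(i)}\, g_{(1)}$ where $g=\sigma_f e$. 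By the symmetric role of $e$ and $f$ (conjugating instead by $\sigma_e$ sends $f$ to the same $g=\sigma_e f$), one likewise gets $\tr_{M(i)}\, f_{(1)}=\tr_{M(i)}\, g_{(1)}$, whence $\tr_{M(i)}\, e_{(1)}=\tr_{M(i)}\, f_{(1)}$ for the adjacent pair.

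The main obstacle I expect is the precise justification that $\sigma_f$ lifts to an operator $\phi$ on the \emph{irreducible} module $M$ implementing the automorphism in the sense above. This is the standard fact that, for a VOA automorphism $g$ of finite order and an irreducible module $M$, the $g$-twist $M\circ g$ is again irreducible, and when $M\circ g\cong M$ there is an intertwiner $\phi$ with $\phi\, v_{(n)}\phi^{-1}=(gv)_{(n)}$. Here the $\sigma$-type assumption on $M$ guarantees that no $L(\shf,\sfr1{16})$-submodules occur, so $\sigma_f$ acts semisimply with eigenvalues $\pm1$ on the $\la f\ra$-decomposition of $M$ and the twisted module $M\circ\sigma_f$ is isomorphic to $M$; I would cite or adapt the argument from \cite{Mi96} (or the analogous lifting used implicitly in \cite{JLY}) to produce $\phi$. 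Once $\phi$ is in hand and shown to be grading-preserving, the trace identity is immediate from conjugation-invariance of the trace on each $M(i)$.
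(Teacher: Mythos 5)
Your proof is correct and takes essentially the same route as the paper's (much terser) argument: indecomposability makes the Ising vectors of $\mathcal{E}$ mutually conjugate under $G$, the $\sigma$-type hypothesis lets each $\sigma_f$ lift to a grading-preserving operator on $M$ via the Miyamoto-type construction (this is exactly what the paper means by ``since $M$ is of $\sigma$-type, it is a $G$-module by definition''), and the equality of traces then follows from conjugation-invariance. Your reduction to adjacent pairs and the discussion of the twisted-module lifting merely spell out details the paper leaves implicit.
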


\pf
Since $G$ is indecomposable, all the Ising vectors in $\mathcal{E}$ are mutually conjugate under $G$.
Since $M$ is of $\sigma$-type, it is a $G$-module by definition.
Then the claim follows from the invariance of traces under conjugation.
\qed
\medskip

Suppose $G$ is indecomposable.
Denote $I_G=\{ \sigma_e \mid e\in \mathcal{E}\}$.
It follows from (2) of Theorem \ref{thm:3.7} that $\abs{I_G}=\abs{\mathcal{E}}$.
Let $H$ be a non-trivial indecomposable 3-transposition subgroup of $G$
where the set of 3-transpositions of $H$ is given by $I_H=H\cap I_G$.
Recall that $I_G$ and $I_H$ have graph structures defined as in Section \ref{S:2}.
We denote the valencies of $I_G$ and $I_H$ by $k_G$ and $k_H$, respectively (see Definition \ref{valency}).
Let $\mathcal{E}'=\{ e\in \mathcal{E} \mid \sigma_e \in H\}$ and consider the subVOA
$W=\la \mathcal{E}'\ra$ of $V$.
Let $\w_V$ and $\w_W$ be the conformal vectors of $V$ and $W$, respectively.
We will frequently make use of the following inequality in the sequel.

\begin{lem}\label{lem:3.14}
  Let $J$ be a $V$-module of $\sigma$-type with respect to $\mathcal{E}$ with a real top weight $h$.
  Suppose that $J$ is a direct sum of irreducible $W$-submodules and there exists a bound
  $\lambda\in \R_{>0}$ such that top weights of irreducible $W$-submodules of $J$ are all real
  and less than or equal to $\lambda$.
  Then the following inequality holds.
  \[
    h \leq \dfr{\lambda(k_H+8)\abs{I_G}}{(k_G+8)\abs{I_H}}.
  \]
\end{lem}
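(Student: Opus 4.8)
The plan is to extract the inequality from a comparison of two trace computations on the module $J$: one using the conformal vector $\w_V$ of $V$ and one using the conformal vector $\w_W$ of the subVOA $W$. The key observation is that the zero-mode of a conformal vector acts as the degree operator, so its trace on a graded piece encodes the top weight, while on the other hand both $\w_V$ and $\w_W$ can be written as weighted sums of Ising vectors via \eqref{eq:2.3}. First I would recall that, since $G$ and $H$ are indecomposable, formula \eqref{eq:2.3} applied to $B(G)$ and $B(H)$ (pushed forward to the Griess algebra through the surjection of Lemma 3.6) gives
\[
  \w_V=\dfr{8}{k_G+8}\dsum_{e\in\mathcal{E}} e,
  \qquad
  \w_W=\dfr{8}{k_H+8}\dsum_{e\in\mathcal{E}'} e,
\]
so that the Virasoro zero-modes decompose as $L^V_0=\o(\w_V)$ and $L^W_0=\o(\w_W)$ into scalar multiples of sums of the $e_{(1)}$.

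Next I would compute $\tr_{J(0)}L^V_0$ and $\tr_{J(0)}L^W_0$ on the top level $J(0)$. On one side, $L^V_0$ acts as $h\cdot\id$ on $J(0)$, so $\tr_{J(0)}L^V_0=h\dim J(0)$; thus
\[
  h\dim J(0)
  =\dfr{8}{k_G+8}\dsum_{e\in\mathcal{E}}\tr_{J(0)}e_{(1)}.
\]
Here the crucial input is Lemma 3.13: because $G$ is indecomposable and $J$ is of $\sigma$-type, all the $\tr_{J(0)}e_{(1)}$ for $e\in\mathcal{E}$ are equal to a common value $\tau$. Hence the right-hand sum is $\abs{\mathcal{E}}\tau=\abs{I_G}\tau$, using $\abs{I_G}=\abs{\mathcal{E}}$. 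The same reasoning with $H$ in place of $G$, noting that $\mathcal{E}'$ consists exactly of those $e$ with $\sigma_e\in H$ so $\abs{\mathcal{E}'}=\abs{I_H}$ and the same common value $\tau$ appears (the Ising vectors in $\mathcal{E}'$ are still among those in $\mathcal{E}$, so their traces equal $\tau$ as well), yields
\[
  \tr_{J(0)}L^W_0
  =\dfr{8}{k_H+8}\,\abs{I_H}\,\tau.
\]
Eliminating $\tau$ between the two identities produces the exact relation
\[
  h\dim J(0)
  =\dfr{(k_H+8)\abs{I_G}}{(k_G+8)\abs{I_H}}\,\tr_{J(0)}L^W_0.
\]

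Finally I would convert the equality into the asserted inequality by bounding $\tr_{J(0)}L^W_0$ from above using the $W$-module structure. Since $J$ is a direct sum of irreducible $W$-modules all of whose top weights are real and at most $\lambda$, the operator $L^W_0$ acts on each irreducible $W$-summand with eigenvalues bounded below by its top weight; restricting to the global top level $J(0)$, every vector there must be a lowest-weight vector of some $W$-summand (it is annihilated by all positive modes of $V$, hence of $W$), so $L^W_0$ acts on $J(0)$ with eigenvalues equal to $W$-top weights, each $\le\lambda$. Therefore $\tr_{J(0)}L^W_0\le\lambda\dim J(0)$. Dividing by $\dim J(0)>0$ gives exactly
\[
  h\le\dfr{\lambda(k_H+8)\abs{I_G}}{(k_G+8)\abs{I_H}}.
\]
The main obstacle I anticipate is the last step: justifying carefully that every vector in the top level $J(0)$ is a genuine $W$-lowest-weight vector so that its $L^W_0$-eigenvalue is a $W$-top weight bounded by $\lambda$ — one must check that the $W$-grading and the $V$-grading are compatible enough that positivity of the $W$-weights on $J(0)$ holds, which is why the hypothesis that $J$ decomposes into irreducible $W$-modules with real top weights is needed rather than merely assumed for convenience.
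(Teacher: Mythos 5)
Your proposal is correct and is essentially the paper's own proof: both write $\w_V$ and $\w_W$ as the scaled sums of Ising vectors from \eqref{eq:2.3}, use Lemma \ref{lem:3.13} to identify the traces $\tr\, e_{(1)}$ on the top level for all $e\in\mathcal{E}$, and then compare $\tr\,(\w_V)_{(1)}=h\dim$ against the bound $\tr\,(\w_W)_{(1)}\leq \lambda\dim$ to deduce the inequality. The only difference is cosmetic: you spell out (correctly) why vectors in the top level of $J$ are annihilated by the positive modes of $W$ and hence lie in the top levels of the irreducible $W$-summands, a point the paper leaves implicit when asserting \eqref{eq:5.2}.
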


\pf
%Since $\sigma_e=(-1)^{e_{(1)}}$ for $e\in E_V$, any $V$-module of $\sigma$-type is $G_V$-invariant.
By \eqref{eq:2.3},  we have
\begin{equation}\label{eq:5.1}
  \tr_{J_h} (\w_V)_{(1)}= \dfr{8\abs{I_G}}{k_G+8} \tr_{J_h} {x^i}_{(1)}=h\dim J_h
\end{equation}
for any $i\in I_G$.
On the other hand, let $\lambda$ be a bound as in the statement.
Then again by \eqref{eq:2.3} we have
\begin{equation}\label{eq:5.2}
  \tr_{J_h} (\w_W)_{(1)}= \dfr{8\abs{I_H}}{k_H+8} \tr_{J_h} {x^i}_{(1)} \leq \lambda \dim J_h
\end{equation}
for any $i\in I_H$.
Since $J$ is of $\sigma$-type, we can apply Lemma \ref{lem:3.13} and obtain the inequality
from \eqref{eq:5.1} and \eqref{eq:5.2}.
\qed

\subsection{Examples of VOAs satisfying Condition \ref{cond:1}}\label{sec:3.2}
Next we discuss some explicit examples and review some of their properties.

\paragraph{Scaled root lattices. }
Let $R$ be an irreducible root lattice. We use $\Delta_R$ (or $\Delta$) to denote  its root system, i.e.,
the set of vectors in $R$ with squared norm 2.
Let $\Delta_R=\Delta_R^+\sqcup (-\Delta_R^+)$ be a partition of $\Delta_R$ into
positive and negative roots.
Denote by $l$ and $h$ the rank and the Coxeter number of $R$, respectively.
We denote by $\sqrt{2}R$ the lattice whose norm is twice of $R$.
Let $V_{\sqrt{2}R}^+$ be the fixed point subalgebra of the lattice VOA $V_{\sqrt{2}R}$
under the lift of $(-1)$-isometry on $R$.
Then $V_{\sqrt{2}R}^+$ is of OZ-type.
For a root $\alpha \in \Delta_R$, set
\begin{equation}\label{eq:wpm}
%  w^\pm(\alpha) := \dfr{1}{16}\alpha_{(-1)}^2\vac \pm \dfr{1}{4}(e^\alpha+e^{-\alpha})
   w^\pm(\alpha) := \dfr{1}{8}\alpha_{(-1)}^2\vac \pm \dfr{1}{4}(e^{\sqrt{2}\alpha}+e^{-\sqrt{2}\alpha})
  \in V_{\sqrt{2}R}^+.
\end{equation}
Then $w^\pm(\alpha)$ are Ising vectors of $V_{\sqrt{2}R}^+$ (cf.~\cite{DMZ}) and $V_{\sqrt{2}R}^+$ is
generated by Ising vectors $\{ w^\pm(\alpha) \mid \alpha\in \Delta_R^+\}$
(cf.~ \cite[Proposition 12.2.6]{FLM}).
Moreover, all Ising vectors of $V_{\sqrt{2}R}^+$ are of $\sigma$-type (cf.~\cite[Corollary 4.5]{LSY}).  
Thus $V_{\sqrt{2}R}^+$ satisfies Condition \ref{cond:1}.
\begin{lem}\label{lem:prod}
The inner products between Ising vectors $\{ w^\pm(\alpha) \mid \alpha\in \Delta_R^+\}$ are as follows.
\begin{equation}\label{eq:inner}
  (w^{\epsilon}(\alpha)\mymid w^{\epsilon'}(\beta))
  = \begin{cases}
    \dfr{1}{4} & \mbox{if $\alpha=\pm \beta$ and $\epsilon=\epsilon'$},
    \medskip\\
    0 & \mbox{if ``$(\alpha\mymid \beta)=0$'' or
    ``$\alpha=\pm \beta$ and $\epsilon=-\epsilon'$\,''},
    \medskip\\
    \dfr{1}{32} & \mbox{if $(\alpha\mymid \beta)=\pm 1$}.
  \end{cases}
\end{equation}
One also has the following product in the Griess algebra of $V_{\sqrt{2}R}^+$.
\begin{equation}\label{eq:prod}
    w^{\epsilon}(\alpha)_{(1)}w^{\epsilon'}(\beta)
  = \dfr{1}{4}\l( w^{\epsilon}(\alpha)+w^{\epsilon'}(\beta)-w^{-\epsilon\epsilon'}(r_\alpha(\beta))\r) ,  \quad \text{ if } (\alpha\mymid \beta)=\pm 1 
\end{equation}
where $r_\alpha\in O(R)$ is the reflection associated with $\alpha$.
\end{lem}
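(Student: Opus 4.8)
The plan is to compute both the inner products \eqref{eq:inner} and the Griess algebra product \eqref{eq:prod} directly from the explicit formula \eqref{eq:wpm} for $w^\pm(\alpha)$, using the standard structure of the lattice VOA $V_{\sqrt 2 R}$. All of the needed data reduces to three ingredients: the bilinear form pairing $(u\mymid v)$, which for degree-two vectors is read off from the $3$-product $u_{(3)}v$; the Heisenberg commutation relations governing the bosonic part $\tfr18\alpha_{(-1)}^2\vac$; and the operator product expansions among the lattice vertex operators $e^{\pm\sqrt2\alpha}$, whose $(n)$-products are controlled by the pairing $(\sqrt2\alpha\mymid\sqrt2\beta)=2(\alpha\mymid\beta)$ and the cocycle. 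First I would fix normalizations so that $(\vac\mymid\vac)=1$ and verify the self-pairing $(w^\epsilon(\alpha)\mymid w^\epsilon(\alpha))=\tfr14$, which is forced anyway since each $w^\pm(\alpha)$ is an Ising vector.

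For the inner product \eqref{eq:inner} I would split into cases according to the value of $(\alpha\mymid\beta)\in\{0,\pm1,\pm2\}$. When $(\alpha\mymid\beta)=0$ the Heisenberg modes $\alpha_{(n)}$ and $\beta_{(n)}$ commute and the lattice pieces $e^{\pm\sqrt2\alpha}$, $e^{\pm\sqrt2\beta}$ pair trivially because $\pm\sqrt2\alpha\pm\sqrt2\beta$ has positive norm, giving pairing $0$. When $\alpha=\pm\beta$, the bosonic contributions always agree and give a fixed scalar, while the lattice contribution $\pm\tfr14(e^{\sqrt2\alpha}+e^{-\sqrt2\alpha})$ pairs with itself according to the sign $\epsilon\epsilon'$: equal signs reinforce to give $\tfr14$, opposite signs cancel the lattice part against the boson part to give $0$. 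When $(\alpha\mymid\beta)=\pm1$ only cross terms of the form $e^{\sqrt2\alpha}$ with $e^{-\sqrt2\beta}$ can contribute (since $\sqrt2\alpha-\sqrt2\beta$ then has norm $2$, landing in degree one rather than two, so in fact it is the bosonic cross term $\tfr{1}{64}(\alpha_{(-1)}^2\vac\mymid\beta_{(-1)}^2\vac)$ that survives), yielding $\tfr1{32}$; I would confirm that the lattice--lattice and lattice--boson cross terms vanish for norm reasons.

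For the product \eqref{eq:prod} with $(\alpha\mymid\beta)=\pm1$, the cleanest route is to recognize $r_\alpha(\beta)=\beta-(\alpha\mymid\beta)\alpha$, so that $r_\alpha(\beta)$ is again a root with the right squared norm, and to verify that the three Ising vectors $w^\epsilon(\alpha)$, $w^{\epsilon'}(\beta)$, $w^{-\epsilon\epsilon'}(r_\alpha(\beta))$ generate a copy of the Griess algebra of an $A_2$-type configuration. I would then appeal to Proposition \ref{prop:3.4}(2): since $(w^\epsilon(\alpha)\mymid w^{\epsilon'}(\beta))=\tfr1{32}$, the product is forced to equal $\tfr14(w^\epsilon(\alpha)+w^{\epsilon'}(\beta)-\sigma_{w^\epsilon(\alpha)}w^{\epsilon'}(\beta))$, so the entire content of \eqref{eq:prod} is the identification $\sigma_{w^\epsilon(\alpha)}w^{\epsilon'}(\beta)=w^{-\epsilon\epsilon'}(r_\alpha(\beta))$. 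This last identity I would establish by computing how the $\sigma$-involution $\sigma_{w^\epsilon(\alpha)}$ acts: it is the lift of the reflection $r_\alpha$ on the lattice part (up to the sign bookkeeping carried by $\epsilon$), which sends $e^{\pm\sqrt2\beta}$ to $e^{\pm\sqrt2 r_\alpha(\beta)}$ and flips the bosonic direction $\beta\mapsto r_\alpha(\beta)$ accordingly.

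The main obstacle will be the careful sign and cocycle bookkeeping in the case $(\alpha\mymid\beta)=\pm1$, namely pinning down exactly why the resulting sign is $-\epsilon\epsilon'$ and why the reflected root appears rather than $\beta$ itself. The pairing computation is essentially a norm count and is routine once the cases are organized, but tracking the precise sign $\epsilon\epsilon'$ through the cocycle $\varepsilon(\sqrt2\alpha,\sqrt2\beta)$ in the lattice vertex algebra, and reconciling it with the normalization choice in \eqref{eq:wpm}, is where the genuine care is needed; I expect this to be the step most prone to sign errors and would cross-check the final formula against the known fact that $\sigma_{w^\epsilon(\alpha)}$ must be an order-two automorphism whose action on the set of Ising vectors is compatible with the graph structure of Proposition \ref{prop:3.4}(3).
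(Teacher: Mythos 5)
First, context: the paper never proves this lemma --- it is quoted as a standard lattice-VOA computation, with the surrounding citations to \cite{DMZ}, \cite{FLM}, \cite{DLMN}, \cite{LSY} doing the work --- so your proposal has to stand on its own as a computation. Your treatment of \eqref{eq:inner} is sound in outline: the invariant form respects the charge decomposition, so only boson--boson and $e^{\lambda}$--$e^{-\lambda}$ pairings survive, and the three cases come out as stated. One concrete slip: when $(\alpha\mymid\beta)=1$ the vector $\sqrt2\alpha-\sqrt2\beta$ has norm $4$, not $2$ (and norm $12$ when $(\alpha\mymid\beta)=-1$); the correct reason the lattice--lattice terms drop out of the pairing is that $(e^{\lambda}\mymid e^{\mu})\vac=e^{\lambda}_{(3)}e^{\mu}$, which vanishes unless $(\lambda\mymid\mu)\le-4$, i.e.\ unless $\mu=-\lambda$, and this never happens for $\lambda=\pm\sqrt2\alpha$, $\mu=\pm\sqrt2\beta$ with $\beta\ne\pm\alpha$. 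Also, your worry about cocycle bookkeeping can be eliminated at the outset: since $(\sqrt2R\mymid\sqrt2R)\subset2\Z$, the $2$-cocycle on $\sqrt{2}R$ may be chosen identically $1$.

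The genuine gap is in your derivation of \eqref{eq:prod}. You correctly note that, given \eqref{eq:inner} and Proposition \ref{prop:3.4}(2), formula \eqref{eq:prod} is \emph{equivalent} to the identity $\sigma_{w^{\epsilon}(\alpha)}w^{\epsilon'}(\beta)=w^{-\epsilon\epsilon'}(r_\alpha(\beta))$, and you propose to prove the latter by ``computing how $\sigma_{w^{\epsilon}(\alpha)}$ acts,'' asserting it is the lift of $r_\alpha$ (times a sign character when $\epsilon=+$). But $\sigma_{w^{\epsilon}(\alpha)}$ is \emph{defined} by its eigenvalues on $V[w^{\epsilon}(\alpha);0]$ and $V[w^{\epsilon}(\alpha);\shf]$; to verify that a concrete lattice automorphism coincides with it, you must know how $V_{\sqrt2R}^+$ decomposes as a $\la w^{\epsilon}(\alpha)\ra$-module (the DMZ-type decompositions of $V_{\Z\sqrt2\alpha}$ and its cosets into Ising modules) and check the sign on each piece --- at least as much work as a direct computation --- while the only shortcut, reading the involution's action off the Griess products, is circular, since those products are exactly what you are proving. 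The repair is cheap: the OPE toolkit you already set up for \eqref{eq:inner} proves \eqref{eq:prod} directly. With the trivial cocycle, for $(\alpha\mymid\beta)=-1$ (so $r_\alpha(\beta)=\alpha+\beta$) one has
\begin{align*}
  (\alpha_{(-1)}^2\vac)_{(1)}(\beta_{(-1)}^2\vac) &= 4(\alpha\mymid\beta)\,\alpha_{(-1)}\beta_{(-1)}\vac, \\
  (\alpha_{(-1)}^2\vac)_{(1)}\,e^{\pm\sqrt2\beta} &= 2(\alpha\mymid\beta)^2\,e^{\pm\sqrt2\beta} \quad\text{(and symmetrically)}, \\
  e^{\lambda}_{(1)}e^{\mu} &=
  \begin{cases}
    e^{\lambda+\mu}, & (\lambda\mymid\mu)=-2,\\
    0, & (\lambda\mymid\mu)\ge -1,
  \end{cases}
\end{align*}
so the only exponential cross-terms surviving in $w^{\epsilon}(\alpha)_{(1)}w^{\epsilon'}(\beta)$ are the matching-sign ones, $e^{\pm\sqrt2\alpha}_{(1)}e^{\pm\sqrt2\beta}=e^{\pm\sqrt2(\alpha+\beta)}$, with total coefficient $\epsilon\epsilon'/16$. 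Collecting all terms gives
$-\tfrac1{16}\alpha_{(-1)}\beta_{(-1)}\vac+\tfrac{\epsilon}{16}(e^{\sqrt2\alpha}+e^{-\sqrt2\alpha})+\tfrac{\epsilon'}{16}(e^{\sqrt2\beta}+e^{-\sqrt2\beta})+\tfrac{\epsilon\epsilon'}{16}(e^{\sqrt2(\alpha+\beta)}+e^{-\sqrt2(\alpha+\beta)})$,
which is exactly $\tfrac14\bigl(w^{\epsilon}(\alpha)+w^{\epsilon'}(\beta)-w^{-\epsilon\epsilon'}(\alpha+\beta)\bigr)$, because the exponential part of $-\tfrac14\,w^{-\epsilon\epsilon'}(\alpha+\beta)$ has coefficient $+\epsilon\epsilon'/16$: this is where the sign $-\epsilon\epsilon'$ comes from, with no appeal to the involution at all. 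The case $(\alpha\mymid\beta)=+1$ follows by replacing $\beta$ with $-\beta$, which changes neither $w^{\epsilon'}(\beta)$ nor $w^{-\epsilon\epsilon'}(r_\alpha(\beta))$. (Your identity for $\sigma_{w^{\epsilon}(\alpha)}$ then becomes a \emph{corollary} of the lemma plus Proposition \ref{prop:3.4}(2), rather than an input to it.)
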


It follows from Lemma \ref{lem:prod} that the Ising vectors $w^\pm(\alpha)$,
$\alpha\in \Delta_R^+$,  span a linear subspace which is closed under the product of
the Griess algebra of $V_{\sqrt{2}R}^+$ and they span the Griess algebra (cf.~\cite{DLMN}).
Indeed, the set $\{ w^\pm(\alpha) \mid \alpha\in \Delta_R^+\}$
exhausts all Ising vectors of $V_{\sqrt{2}R}^+$ except for the case $R=E_8$
(cf.~\cite[Theorem 4.4]{LSY}).

The 3-transposition groups $G_R=\la I_R\ra$ and the associated VOAs are given in Table 1  (cf.~\cite{Ma}),
where the parameters $I_R=\{ \sigma_{w^\pm(\alpha)} \mid \alpha \in \Delta_R^+\}$,  
$k=k_{G_R}$ is defined as in Definition  \ref{valency}, $c$ is the central charge, $d$ denotes the dimension of the Griess algebra, and $F_n$ denotes the natural module for the symmetric group.

\[
\renewcommand{\arraystretch}{2}
\begin{array}{c}
\begin{array}{|c|c|c|c|c|c|}
  \hline
  V_{\sqrt{2}R}^+ & G_R & \abs{I_R} & k & ~c~ & d
  \\ \hline
  ~V_{\sqrt{2}A_n}^+~\mbox{\small ($n>1$)}~ & F_{n+1}{:}\mathfrak{S}_{n+1} & n(n+1) & 4(n-1) & n
  & n(n+1)
  \\ \hline
  ~V_{\sqrt{2}D_n}^+~\mbox{\small ($n\geq 4$)}~ & F_n^2{:}\mathfrak{S}_n & ~2n(n-1)~ & ~8(n-2)~
  & ~n~ & ~\dfr{n(3n-1)}{2}~
  \\ \hline
  V_{\sqrt{2}E_6}^+ & 2^6{:}\mathrm{O}_6^-(2) & 72 & 40 & 6 & 57
  \\ \hline
  V_{\sqrt{2}E_7}^+ & ~2^6{:}\mathrm{Sp}_6(2)~ & 126 & 64 & 7 & 91
  \\ \hline
  V_{\sqrt{2}E_8}^+ & 2^8{:}\mathrm{O}_8^+(2) & 240 & 112 & 8 & 156
%  \\ \hline
%  V_{\sqrt{2}E_8}^+ & \mathrm{O}_{10}^+(2) & 496 & 240 & 8 & 156 & \mbox{extra Isings}
  \\ \hline
\end{array}
\\
\mbox{Table 1: Parameters of Griess algebras and 3-transposition groups}
\end{array}
\renewcommand{\arraystretch}{1}
\]

\paragraph{Commutants and Abelian Coset algebras.}
By Lemma \ref{lem:prod}, the linear span of
the subset $\{ w^-(\alpha) \mid \alpha \in \Delta_R^+\}$ is also closed
under the product in the Griess algebra of $V_{\sqrt{2}R}^+$.
As in \cite{LSY}, we denote by $M_R$ the subVOA generated by
$\{ w^-(\alpha) \mid \alpha \in \Delta_R^+\}$.  Note that the Griess algebra of $M_R$ is spanned linearly by $\{ w^-(\alpha) \mid \alpha \in \Delta_R^+\}$.  
It turns out that $M_R$ is also isomorphic to the abelian coset algebra $K(R,2)$ defined as follows.

\begin{df}[\cite{DW}] \label{KG}
Let $\mathfrak{g}$ be the simple Lie algebra associated with a root system of type $R$.  
For $k\in \Z_{\geq 0}$, let $L_{\hat{\mathfrak{g}}}(k,0)$ be the simple affine VOA associated
with $\hat{\mathfrak{g}}$ at level $k$ and $M_{\mathfrak{h}}$ the Heisenberg subVOA generated
by a Cartan subalgebra $\mathfrak{h}$ of the Lie algebra  on the weight one subspace of
$L_{\hat{\mathfrak{g}}}(k,0)$ which can be identified with $\mathfrak{g}$.
The abelian coset subalgebra is defined as the commutant
\begin{equation}\label{eq:3.3}
	K(R,k)= K(\mathfrak{g},k):=\com(M_{\mathfrak{h}},L_{\hat{\mathfrak{g}}}(k,0))
\end{equation}
of $M_{\mathfrak{h}}$ in $L_{\hat{\mathfrak{g}}}(k,0)$, 
where the commutant of $U$ in $V$ is defined as 
\[
\com(U,V) =\{ v\in V\mid u_{(n)} v=0 \text{ for all } u\in U, n\in \Z_{\geq 0} \} 
\]
for a subalgebra $U$ of $V$. The central charge of $K(\mathfrak{g},k)$ is $hl(k-1)/(h+k)$, where $l$ and $h$ are
the rank and the Coxeter number of $\mathfrak{g}$, respectively.
\end{df}
\begin{comment}
It is known \cite{DW} that the commutant of $K(\mathfrak{g},k)$ in $L_{\hat{\mathfrak{g}}}(k,0)$
is isomorphic to the lattice VOA $V_{\sqrt{k}R}$ and 
$K(\mathfrak{g},k)\tensor V_{\sqrt{k}R}$ is a conformal subVOA of $L_{\hat{\mathfrak{g}}}(k,0)$.  
\end{comment}
It is shown in \cite{DW} that for each $\alpha\in \Delta_R^+$,
there is an embedding
\begin{equation}\label{eq:embed}
  \iota_{\alpha,k} :K(\mathfrak{sl}_2,k)\to  K(\mathfrak{g},k),
\end{equation}
and $K(\mathfrak{g},k)$ is generated by subalgebras
$\mathrm{Im}\,\iota_{\alpha,k}\cong K(\mathfrak{sl}_2,k)$, $\alpha\in \Delta_R^+$, as a VOA.

For $k=2$, $K(\mathfrak{sl}_2,2)$ has central charge $1/2$ and it 
is isomorphic to $L(\shf,0)$.
Let  $x^\alpha$ be 
the conformal vector of $\mathrm{Im}\,\iota_{\alpha,2}$. Then  $x^\alpha$ is an Ising vector of
$K(\mathfrak{g},2)$, and  $K(\mathfrak{g},2)$ is generated by $x^\alpha$,
$\alpha\in \Delta_R^+$.

\begin{lem}
We have  $K(R,2) \cong M_R$. 
\end{lem}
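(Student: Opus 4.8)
\medskip
\noindent\textbf{Proof proposal.}

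The plan is to realize both $K(R,2)$ and $M_R$ as simple VOAs of OZ-type generated by the \emph{same} configuration of Ising vectors of $\sigma$-type indexed by $\Delta_R^+$, and then to apply the uniqueness of the VOA structure in Theorem~\ref{thm:3.7}(4). First I would check that both satisfy Condition~\ref{cond:1}. For $M_R$ this is immediate: it is a subVOA of $V_{\sqrt{2}R}^+$ containing the vacuum, so $(M_R)_1\subseteq (V_{\sqrt{2}R}^+)_1=0$ and it is of OZ-type, while its generators $w^-(\alpha)$ are Ising vectors of $\sigma$-type in $V_{\sqrt{2}R}^+$ and hence in $M_R$. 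For $K(R,2)$ one checks that the commutant of the Heisenberg subalgebra has trivial weight-one space, so $K(R,2)$ is of OZ-type, and that the generators $x^\alpha$ --- the conformal vectors of $\mathrm{Im}\,\iota_{\alpha,2}\cong L(\shf,0)$ --- are Ising vectors of $\sigma$-type (cf.~\cite{DW,LSY}). Thus both VOAs are generated by families of Ising vectors of $\sigma$-type indexed by $\Delta_R^+$ and satisfy Condition~\ref{cond:1}.

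The core step is to show that $x^\alpha\mapsto w^-(\alpha)$ induces an isomorphism of Griess algebras. By Lemma~\ref{lem:3.5} each family spans the corresponding Griess algebra, so it is enough to match the pairwise inner products and the products $x^\alpha_{(1)}x^\beta$ with the values in \eqref{eq:inner} and \eqref{eq:prod}. Because these quantities depend only on the root subsystem of rank $\le 2$ generated by $\alpha$ and $\beta$, the verification reduces to two local cases: when $(\alpha\mymid\beta)=0$ the subalgebra $\la x^\alpha,x^\beta\ra$ lies in $\mathrm{Im}\,\iota_{\alpha,2}\tensor\mathrm{Im}\,\iota_{\beta,2}$, giving $(x^\alpha\mymid x^\beta)=0$ and $x^\alpha_{(1)}x^\beta=0$; and when $(\alpha\mymid\beta)=\pm1$ it lies in a copy of $K(A_2,2)$, where one computes $(x^\alpha\mymid x^\beta)=\tfr{1}{32}$ and the Matsuo relation $x^\alpha_{(1)}x^\beta=\tfr{1}{4}\l(x^\alpha+x^\beta-x^{r_\alpha(\beta)}\r)$. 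In particular $\sigma_{x^\alpha}$ acts on the generators as the reflection $r_\alpha$, exactly as $\sigma_{w^-(\alpha)}$ does by \eqref{eq:prod}; hence the $3$-transposition groups $\la\sigma_{x^\alpha}\ra$ and $\la\sigma_{w^-(\alpha)}\ra$ are the same quotient of the Weyl group of $R$, and the two Griess algebras are isomorphic under $x^\alpha\mapsto w^-(\alpha)$.

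Finally, since both $K(R,2)$ and $M_R$ are simple --- simplicity of the abelian coset $K(R,2)$ being available from parafermion theory (cf.~\cite{DW}), and that of $M_R$ then following once the isomorphism is in place --- Theorem~\ref{thm:3.7}(4) guarantees that each VOA is determined up to isomorphism by its Griess algebra, so the Griess isomorphism above lifts to a VOA isomorphism $K(R,2)\cong M_R$. The base case $R=A_1$, where $K(A_1,2)\cong L(\shf,0)\cong M_{A_1}$, anchors the reduction. I expect the main obstacle to be the rank-two computation in $K(A_2,2)$ fixing the products $x^\alpha_{(1)}x^\beta$ and, in tandem, arranging the simplicity of both sides without circular reasoning; a clean alternative is to bypass the uniqueness theorem and build the isomorphism directly from the matching of the generating copies of $L(\shf,0)$ and their rank-two fusion data.
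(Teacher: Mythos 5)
Your strategy is genuinely different from the paper's: you want an abstract isomorphism via the uniqueness statement of Theorem~\ref{thm:3.7}(4) applied to two simple VOAs satisfying Condition~\ref{cond:1} with matching Griess algebras, whereas the paper proves the lemma by a concrete identification --- it embeds $L_{\hat{\mathfrak{g}}}(2,0)$ diagonally into $V_R\tensor V_R\cong V_{R\oplus R}$, observes that the relevant Heisenberg subVOA is the one of $V_{\mathcal{R}^+}$ so that $K(R,2)\subset V_{\mathcal{R}^-}\cong V_{\sqrt{2}R}$, and then checks by the Sugawara construction that $x^\alpha=w^-(\tilde{\alpha})$, so that $K(R,2)$ and $M_R$ are generated by the very same Ising vectors inside one lattice VOA. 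That identification is what makes the paper's argument short: all the properties your route must verify separately ($\sigma$-type, the Griess products, simplicity) are imported for free from the lattice realization.

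As written, your proposal has two genuine gaps, both of which you partially acknowledge but do not close. First, the simplicity of $M_R$: you need it as a \emph{hypothesis} to invoke Theorem~\ref{thm:3.7}(4), but you propose to deduce it ``once the isomorphism is in place,'' which is circular. (It can be repaired --- e.g.\ the invariant form of $V_{\sqrt{2}R}^+$ restricts to an invariant form on $M_R$ whose real form is positive definite, and the radical of the invariant form of an OZ-type VOA is its maximal ideal --- but this argument is absent from the proposal.) Second, Condition~\ref{cond:1} for $K(R,2)$ requires that the Ising vectors $x^\alpha$ be of $\sigma$-type \emph{on $K(R,2)$}, and your citation of \cite{DW,LSY} does not deliver this: \cite{DW} does not treat $\sigma$-type at all, and \cite{LSY} proves $\sigma$-type for the vectors $w^-(\alpha)$ of $M_R$ inside the lattice VOA, which presupposes exactly the identification the lemma is asserting. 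A non-circular coset-side argument exists (inside $L_{\hat{\mathfrak{g}}}(2,0)$ the $L(\shf,\os)$-parts for $\la x^\alpha\ra$ occur only in the $\hat{\mathfrak{sl}}_2^{\alpha}$-isotypic components of odd $\alpha^\vee$-weight, which cannot meet the $\mathfrak{h}$-weight-zero space $K(\mathfrak{g},2)$), but it must be supplied; without it, together with the missing simplicity of $M_R$, the hypotheses of Theorem~\ref{thm:3.7}(4) are not verified and the proof does not go through. The rank-two Griess computation in $K(A_2,2)$ is a smaller issue, as it is available from \cite{JL,LS}.
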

\begin{proof}
Let $\mathfrak{g}$ be the simple Lie algebra associated with the root lattice $R$.
Recall that $L_\mathfrak{g}(1,0) \cong V_R$ and the root lattice $R\oplus R$ contains two mutually orthogonal copies of $\sqrt{2} R$, namely  $\mathcal{R}^+ =\{ (\alpha,\alpha)\in R\oplus R \mid  \alpha\in R\}$ and $\mathcal{R}^- =\{ (\alpha,-\alpha)\in R\oplus R\mid \alpha\in R\}$.

One can embed  the level 2 affine VOA $L_\mathfrak{g}(2,0)$ diagonally into 
$L_\mathfrak{g}(1,0) \otimes L_\mathfrak{g}(1,0) \cong V_R\otimes V_R$, namely,
$L_\mathfrak{g}(2,0)$  is generated by  $(\alpha, \alpha)_{(-1)} \cdot \vac, e^{(\alpha,0)} + e^{(0,\alpha)}$ with $\alpha \in \Delta_R= R(2)$.   

The subspace $\mathfrak{h}=\mathrm{span}\{ (\alpha, \alpha)_{(-1)}\cdot \vac| \alpha \in \Delta_R\}$ is a Cartan subalgebra of  $L_\mathfrak{g}(2,0)_1$. The corresponding Heisenberg VOA $M_{\mathfrak{h}}$ agrees with the  Heisenberg subVOA of $V_{\mathcal{R}^+}$. It follows that  $K(R,2) \subset V_{\mathcal{R}^-}$. 

The embedding \eqref{eq:embed} is based on an embedding $L_{\hat{\mathfrak{sl}}_2}(2,0)\subset L_{\hat{\mathfrak{g}}}(2,0)$. Let $S_\alpha= \mathrm{span} 
\{ (\alpha, \alpha)_{(-1)} \cdot \vac, e^{(\alpha,0)} + e^{(0,\alpha)}, e^{-(\alpha,0)} + e^{-(0,\alpha)}\}$ be the $\mathfrak{sl}_2$-triple defined by $\alpha$.  Then $x^\alpha= \omega_{L_{\hat{S}_\alpha}(2,0)} - 1/8 (\alpha,\alpha)_{(-1)}^2\cdot \vac$.  By the Sugawara construction,  it is straightforward to show $x^\alpha= \omega^{-}(\tilde{\alpha})$, where $\tilde{\alpha} =(\alpha, -\alpha)\in \mathcal{R}^-$.  Thus, $K(R,2)\cong M_R$ since they are both generated by  $\omega^{-}(\tilde{\alpha})\in V_{\mathcal{R}^-}, \alpha \in \Delta_R^+$. 
\end{proof}

The set of all Ising vectors of $M_R\cong K(\mathfrak{g},2)$
are classified in \cite{LSY}; it is given by
$\{ x^\alpha\mid \alpha \in \Delta_R^+\}$ except for the case $\mathfrak{g}$ is of $E_8$ type.
Let $I_{\Delta_R} =\{ \sigma_{x^\alpha} \mid \alpha \in \Delta_R^+\}$. Then the 3 transposition  $G_{\Delta_R}=\la I_{\Delta_R}\ra$ for simple root lattices are as follows (cf.~\cite{Ma}), where 
$\abs{I_\Delta}=\abs{\Delta^+}$ and $k$, $c$, $d$  and $F_n$ are defined as in Table 1. 

\[
\renewcommand{\arraystretch}{2}
\begin{array}{c}
\begin{array}{|c|c|c|c|c|c|}
  \hline
  V & G_{\Delta_R} & \abs{I_{\Delta_R}} & k & c & d
  \\ \hline
  K(A_n,2) ~\mbox{\small ($n>1$)} & \mathfrak{S}_{n+1} & ~\dfr{n(n+1)}{2}~ & 2(n-1)
  & ~\dfr{n(n+1)}{n+3}~ & ~\dfr{n(n+1)}{2}~
  \\ \hline
  K(D_n,2) ~\mbox{\small ($n\geq 4$)} & F_n{:}\mathfrak{S}_n & n(n-1) & ~4(n-2)~ & n-1 & n(n-1)
  \\ \hline
  K(E_6,2) & \mathrm{O}_6^-(2) & 36 & 20 & \dfr{36}{7} & 36
  \\ \hline
  K(E_7,2) & ~\mathrm{Sp}_6(2)~ & 63 & 32 & \dfr{63}{10} & 63
  \\ \hline
  K(E_8,2) & \mathrm{O}_8^+(2) & 120 & 56 & \dfr{15}{2} & 120
%  \\ \hline
%  K(E_8,2) & \mathrm{Sp}_8(2) & 255 & 128 & \dfr{15}{2} & \mbox{extra Isings}
  \\ \hline
\end{array}
\\
\mbox{Table 2: Parameters of Griess algebras and 3-transposition groups}
\end{array}
\renewcommand{\arraystretch}{1}
\]

\paragraph{$V_{\sqrt{2}E_8}^+$ and $M_{E_8}$.}  
By \cite{DLMN} (see also \eqref{eq:2.3}),  the conformal vectors of $M_R$ and $V_{\sqrt{2}R}^+$ are given by 
\begin{equation}\label{eq:3.8}
  \eta =\dfr{4}{h+2}\dsum_{\alpha\in \Delta_R^+} w^-(\alpha)\quad \text{ and } \quad 
  %=\dfr{2}{h+2}\dsum_{\alpha\in \sqrt{2}R\atop (\alpha|\alpha)=4} w^-(\alpha),~~~~~
  \w =\dfr{1}{2h}\dsum_{\alpha\in \Delta_R^+} \alpha_{(-1)}^2\vac, 
  %=\dfr{1}{8h}\dsum_{\alpha\in \sqrt{2}R\atop (\alpha|\alpha)=4} \alpha_{(-1)}^2\vac , 
\end{equation}
respectively and the central charges of $\eta$ and $\w$ are $hl/(h+2)$ and $l$, respectively.
Then  
$ %\begin{equation}\label{eq:xi}
\xi:=\w-\eta
$ %\end{equation}
is a Virasoro vector of central charge $2l/(h+2)$ (cf.~\cite[Eq. (2.5)]{DLMN}).  
In the case of $R=E_8$, the central charge of $\xi$ is $1/2$ so that $\xi$ is an Ising vector. That means $V_{\sqrt{2}E_8}^+$ contains some Ising vectors not contained in the set $\{ w^\pm(\alpha) \mid \alpha\in \Delta^+\}$. 

For $x\in E_8$, define $\varphi_x:=(-1)^{(1/\sqrt{2})x_{(0)}}$.
Then $\varphi_x$ is an automorphism of $V_{\sqrt{2}E_8}^+$ and the map $x\mapsto \varphi_x$
defines a homomorphism from the additive group $E_8$ to $\aut\,V_{\sqrt{2}E_8}^+$ with
kernel $2E_8$.
Therefore, we have totally $[E_8:2E_8]=2^8$ Ising vectors of the form $\varphi_x (\xi)$,
$x\in E_8$, in $V_{\sqrt{2}E_8}^+$. 
We will call Ising vectors of the form $\varphi_x (\xi)$ of \emph{$E_8$-type},
whereas those in $\{ w^\pm(\alpha) \mid \alpha\in \Delta^+\}$ of \emph{$A_1$-type}.
The VOA $V_{\sqrt{2}E_8}^+$ contains 240 Ising vectors of $A_1$-type and 256 Ising vectors
of $E_8$-type and the set of Ising vectors is given by the union of those
Ising vectors (cf.~\cite{G}).
Since $M_{E_8}\cong K(E_8,2)$ is a subVOA of $V_{\sqrt{2}E_8}^+$, it also contains 135
Ising vectors of $E_8$-type other than 120 Ising vectors of $A_1$-type (cf.~\cite{LSY}).
Let  $I_V=\{ \sigma_e \mid e\in E_V\}$. The group $G_V=\la I_V\ra,$ $V=V_{\sqrt{2}E_8}^+$ or $K(E_8,2)$  are  as follows.

\[
\renewcommand{\arraystretch}{2}
\begin{array}{c}
\begin{array}{|c|c|c|c|c|c|}
  \hline
  V & G_V & \abs{I_V} & k & c & d
  \\ \hline
  ~K(E_8,2)~ & ~\mathrm{Sp}_8(2)~ & 255=120+135 & 128 & ~\dfr{15}{2}~ & 120
  \\ \hline
  V_{\sqrt{2}E_8}^+ & \mathrm{O}_{10}^+(2) & ~496=240+256~ & ~240~ & 8 & ~156~
  \\ \hline
\end{array}
\\
\mbox{Table 3: Parameters of Griess algebras and 3-transposition groups}
\end{array}
\renewcommand{\arraystretch}{1}
\]
There is one more example from Matsuo's classification, which is related to the commutant VOA $\com(K(A_2,2),V_{\sqrt{2}E_8}^+)$ \cite{LSY,Ma}. We also include the following table for completeness.

\[
\renewcommand{\arraystretch}{2}
\begin{array}{c}
\begin{array}{|c|c|c|c|c|c|}
  \hline
  V & G_V & \abs{I_V} & k & c & d
  \\ \hline
  ~\com(K(A_2,2),V_{\sqrt{2}E_8}^+)~ & ~\mathrm{O}_8^-(2)~ & ~136~ & ~72~ & ~\dfr{34}{5}~ & ~85
  \\ \hline
\end{array}
\\
\mbox{Table 4: Parameters of the Griess algebras and the 3-transposition group}
\end{array}
\renewcommand{\arraystretch}{1}
\]

\begin{rem}\label{rem:3.8}
The non-degenerate quotients of the algebras $B(G)$ associated with
$G=\mathrm{O}_{10}^+(2)$ and $2^8{:}\mathrm{O}_8^+(2)$ are isomorphic and realized by
the Griess algebra of $V_{\sqrt{2}E_8}^+$.
Similarly, $B(G)$ associated with $G=\mathrm{O}_8^+(2)$ and
$\mathrm{Sp}_8(2)$ have isomorphic non-degenerate quotients which are realized by
the Griess algebra of $K(E_8,2)$.
This implies that there are two $\sigma$-closed generating sets in each of $V_{\sqrt{2}E_8}^+$ and
$K(E_8,2)$. 
\end{rem}

\begin{rem}\label{rem:3.9}
  It is known (cf.~\cite{DLY1,LSY}) that $K(D_n,2)\cong V_{\sqrt{2}A_{n-1}}^+$ for $n\geq 3$,
  where we identify $D_3=A_3$.
  
\begin{comment}
There is an involution $\sigma$ of $V_{A_1^n}$ such that
$\sigma(w^-(\alpha_i-\alpha_j))=w^-(\alpha_i-\alpha_j)$,
$\sigma(w^+(\alpha_i-\alpha_j))=w^-(\alpha_i+\alpha_j)$,
$\sigma(w^-(\alpha_i+\alpha_j))=w^+(\alpha_i-\alpha_j)$,
$\sigma(w^+(\alpha_i+\alpha_j))=w^+(\alpha_i+\alpha_j)$
so that $\sigma$ induces an isomorphism from $M_{D_n}$ to $V_{\sqrt{2}A_{n-1}}^+$.
\end{comment}
\end{rem}

\section{Simplicity for type $D$}
Let $n\geq 4$ and let $V^{(n)}$ be a VOA satisfying Condition \ref{cond:1}. Assume that there exists a finite $\sigma$-closed generating set $\mathcal{E}$ of $V^{(n)}$ such that $G=\la \sigma_e\mid e\in \mathcal{E}\ra$ is isomorphic to $F_n{:}\mathfrak{S}_n$.
It follows from  Theorem \ref{thm:3.7} (4) that
$V^{(n)}$ has the unique simple quotient isomorphic to $K({D_{n}},2)\cong V_{\sqrt{2}A_{n-1}}^+$ (cf.~Table 1 and Remark \ref{rem:3.9}). In this section,  we will prove that $V^{(n)}$ is indeed simple and isomorphic to $V_{\sqrt{2}A_{n-1}}^+$.

Since the algebra $B(F_n{:}\mathfrak{S}_n)$ is non-degenerate (cf.~Lemma \ref{lem:6.1}), $B(F_n{:}\mathfrak{S}_n)$ is isomorphic to the Griess algebra of $V^{(n)}$ and the Griess  algebra of its simple quotient $V_{\sqrt{2}A_{n-1}}^+$. Therefore, we may identify 
the set of Ising vectors of $V^{(n)}$ with those of $V_{\sqrt{2}A_{n-1}}^+$. Recall that the Griess algebra of $V_{\sqrt{2}A_{n-1}}^+$  has a basis consisting of Ising vectors
\begin{equation}\label{eq:4.2}
  \{ w^\pm(\al) \mid \al \in \Delta^+_{A_{n-1}} \}.
\end{equation}
Without loss of generality, we may view it as a subset of $V^{(n)}$.

\medskip

Notice that $V^{(n)}$ itself is a $V^{(n)}$-module of $\sigma$-type (cf.~Definition \ref{df:sigma}); 
thus, any ideals or quotient modules of $V^{(n)}$ are $V^{(n)}$-modules of $\sigma$-type. 
In order to analyze the structure of $V^{(n)}$, we will first classify irreducible $V^{(n)}$-modules of $\sigma$-type.

\subsection{Irreducible $V^{(n)}$-modules of $\sigma$-type}
%Since $V^{(n)}$ has the unique simple quotient isomorphic to $V_{\sqrt{2}A_{n-1}}^+$
Since there is a surjection from $V^{(n)}$ to $K(D_n,2)\cong V_{\sqrt{2}A_{n-1}}^+$ 
by Theorem
\ref{thm:3.7}, irreducible $V_{\sqrt{2}A_{n-1}}^+$-modules of $\sigma$-type
are also irreducible $V^{(n)}$-modules of $\sigma$-type.
We first consider irreducible $V_{\sqrt{2}A_{n-1}}^+$-modules of $\sigma$-type.

Let $\{\alpha_1,\dots,\alpha_{n-1}\}$ be a set of simple roots of the lattice $A_{n-1}$ such that
$(\alpha_i\mymid \alpha_i)=2$ for $1\leq i<n$,
$(\alpha_i\mymid \alpha_j)=-1$ if $\abs{i-j}=1$
and $(\alpha_i\mymid \alpha_j)=0$ if $\abs{i-j}>1$ for $1\leq i<j<n$.
Let
\begin{equation}\label{eq:4.1}
\Lambda_{n-1}=\frac{\sqrt{2}}{2n}(\alpha_1+2\alpha_2+\cds +(n-1)\alpha_{n-1}). 
\end{equation} 
Note that 
$\Lambda_{n-1}$ is the unique vector in $(\sqrt{2}A_{n-1})^*$ such that
$(\Lambda_{n-1}\mymid \sqrt{2}\alpha_i)=\delta_{i,n-1}$ for $1\leq i<n$.

\begin{lem}\label{lem:4.1}
  The number of inequivalent irreducible $V_{\sqrt{2}_{A_{n-1}}}^+$-modules of $\sigma$-type
  is equal to $(n+3)/2$ if $n$ is odd and to $n/2+3$ if $n$ is even.
\end{lem}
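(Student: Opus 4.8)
The plan is to combine the Abe--Dong classification of irreducible $V_{\sqrt{2}A_{n-1}}^+$-modules with a rank-one reduction that detects the $L(\shf,\os)$-components responsible for failing to be of $\sigma$-type. Write $L=\sqrt{2}A_{n-1}$ and let $\theta$ be the lift of the $(-1)$-isometry. By the classification, every irreducible $V_L^+$-module is either an untwisted module arising from some $V_{L+\gamma}$ with $\gamma\in L^*/L$, or a $\theta$-twisted module. First I would identify the discriminant group: since $L^*=\tfr{1}{\sqrt{2}}A_{n-1}^*$, the assignment $\gamma=\tfr{1}{\sqrt{2}}\mu\mapsto\mu$ gives $L^*/L\cong A_{n-1}^*/2A_{n-1}$, and the relation $(\gamma\mymid\sqrt{2}\alpha)=(\mu\mymid\alpha)$ converts all inner-product conditions into conditions on $\mu$ in the weight lattice $P=A_{n-1}^*$ (with root lattice $Q=A_{n-1}$).

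The heart of the argument is the $\sigma$-type criterion for the untwisted modules. For a root $\alpha$ the Ising vectors $w^\pm(\alpha)$ lie in $V_N^+$ with $N=\Z\sqrt{2}\alpha$, $(\sqrt{2}\alpha\mymid\sqrt{2}\alpha)=4$, and generate a commuting pair of copies of $L(\shf,0)$. Restricting $V_{L+\gamma}$ to $V_N$ and using the standard decomposition of the four irreducible $V_N$-modules $V_{N+i\sqrt{2}\alpha/4}$ $(i\in\Z/4)$ under this Ising pair, the $L(\shf,\os)$-components occur precisely in the cosets with $i$ odd. Since the projection of $L+\gamma$ onto $\R\sqrt{2}\alpha$ meets the odd cosets if and only if $(\mu\mymid\alpha)$ is odd, I would conclude that $V_{L+\gamma}$ is of $\sigma$-type with respect to all of $\mathcal{E}$ if and only if $(\mu\mymid\alpha)\in 2\Z$ for every root $\alpha$, equivalently $(\mu\mymid\alpha_i)\in 2\Z$ for the simple roots, equivalently $\mu\in 2P$. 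Thus the $\sigma$-type cosets form the subgroup $D_0=2P/2Q\cong P/Q\cong\Z/n$ of $L^*/L$, generated by $2\Lambda_{n-1}=\sqrt{2}\omega_{n-1}$. The same rank-one reduction shows that a $\theta$-twisted module restricts to a $\theta$-twisted $V_N$-module on each factor, which always carries an $L(\shf,\os)$-component; hence no twisted module is of $\sigma$-type and the count reduces to the untwisted modules indexed by $D_0$.

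Finally I would count the irreducible $V_L^+$-modules attached to $D_0\cong\Z/n$ via the splitting rule from the classification: a coset $\gamma$ contributes two irreducible modules $V_{L+\gamma}^{\pm}$ when $2\gamma=0$ in $L^*/L$ (namely $\gamma=0$, together with the unique order-two element when $n$ is even), while each pair $\{\gamma,-\gamma\}$ with $2\gamma\ne 0$ contributes a single irreducible module $V_{L+\gamma}\cong V_{L-\gamma}$. For $n$ odd, $D_0$ has no nonzero $2$-torsion, giving $2+(n-1)/2=(n+3)/2$ modules; for $n$ even, the additional order-two coset contributes $2+2+(n-2)/2=n/2+3$ modules, exactly as claimed.

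I expect the main obstacle to be the $\sigma$-type criterion itself. One must be careful that the projection of $L$ onto a single root direction is $\tfr{1}{2}\Z\sqrt{2}\alpha$ rather than $\Z\sqrt{2}\alpha$, so that the restriction of $V_{L+\gamma}$ to $V_N$ sees the cosets $i\in\{0,2\}$ (no $\os$) exactly when $(\mu\mymid\alpha)$ is even and the cosets $i\in\{1,3\}$ (with $\os$) exactly when it is odd; verifying that this single parity condition simultaneously governs both $w^+(\alpha)$ and $w^-(\alpha)$, uniformly over all roots $\alpha$, is where the argument has to be made precise.
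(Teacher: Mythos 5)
Your proposal is correct and follows essentially the same route as the paper's proof: both invoke the Abe--Dong classification to reduce to untwisted modules $V_{\sqrt{2}A_{n-1}+\lambda}$, characterize the $\sigma$-type ones by the evenness condition $(\lambda\mid \sqrt{2}A_{n-1})\subset 2\Z$ via the rank-one decomposition of $V_{\Z\beta}$-modules ($\beta$ of norm $4$) under the Ising pair $w^\pm(\alpha)$, exclude the twisted modules, and then count using $V_{L+\lambda}\cong V_{L-\lambda}$ together with the $\pm$-splitting at the $2$-torsion cosets. The only differences are presentational: you derive the coset enumeration directly as $2P/2Q\cong P/Q\cong \Z/n$ where the paper cites Lemma 4.1 of \cite{AYY}, and you spell out the parity computation that the paper treats as ``directly verified.''
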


\pf
It is shown in Theorem 4.4 of \cite{LSY} that for any Ising vector $x$ of
$V_{\sqrt{2}A_{n-1}}^+$, there exists a vector $\beta \in \sqrt{2}A_{n-1}$ of squared norm 4
such that $x\in V_{\Z\beta}^+\subset V_{\sqrt{2}A_{n-1}}^+$.
The irreducible modules of $V_{\Z\beta}^+$ and $V_{\sqrt{2}A_{n-1}}^+$ are classified in \cite{AD}
and it turns out that any irreducible $V_{\sqrt{2}A_{n-1}}^+$-module is a submodule of
either untwisted or $\Z_2$-twisted irreducible $V_{\sqrt{2}A_{n-1}}$-modules.
It is directly verified that $x$ is not of $\sigma$-type on irreducible
$V_{\Z\beta}^+$-submodules contained in a twisted $V_{\Z\beta}$-module, and $x$ is of
$\sigma$-type on a $V_{\Z\beta}^+$-module of untwisted type if and only if it is a submodule
of either $V_{\Z\beta}$ or $V_{\Z\beta+\beta/2}$.
Therefore, if $M$ is an irreducible $V_{\sqrt{2}A_{n-1}}^+$-module of $\sigma$-type,
then it is untwisted type and there is a $\lambda\in (\sqrt{2}A_{n-1})^*$ such that
$M\subset V_{\sqrt{2}A_{n-1}+\lambda}$ and $(\lambda\mymid \sqrt{2}A_{n-1})\subset 2\Z$.
The cosets $\lambda+\sqrt{2}A_{n-1}$ satisfying the latter condition are given by
$2i\Lambda_{n-1}+\sqrt{2}A_{n-1}$, $0\leq i<n$ (cf.~Lemma 4.1 of \cite{AYY}).
It is known that $V_{\sqrt{2}A_{n-1}+\lambda}\cong V_{\sqrt{2}A_{n-1}-\lambda}$ as
$V_{\sqrt{2}A_{n-1}}^+$-modules for any $\lambda\in (\sqrt{2}A_{n-1})^*$ by Proposition 3.1
of \cite{AD}.
If $n$ is odd, all $V_{\sqrt{2}A_{n-1}+2i\Lambda_{n-1}}$, $1\leq i\leq (n-1)/2$, are inequivalent
irreducible $V_{\sqrt{2}A_{n-1}}^+$-modules whereas in the case $i=0$ it splits into a direct sum
$V_{\sqrt{2}A_{n-1}}=V_{\sqrt{2}A_{n-1}}^+\oplus V_{\sqrt{2}A_{n-1}}^-$ of two inequivalent
irreducible $V_{\sqrt{2}A_{n-1}}^+$-modules (cf.~\cite{AD}).
Therefore, the number of inequivalent irreducible $V_{\sqrt{2}A_{n-1}}^+$-modules of
$\sigma$-type is $(n-1)/2+2=(n+3)/2$ as claimed.

If $n$ is even, by \cite{AD},
all $V_{\sqrt{2}A_{n-1}+2i\Lambda_{n-1}}$, $1\leq i\leq n/2-1$, are inequivalent
irreducible $V_{\sqrt{2}A_{n-1}}^+$-modules of $\sigma$-type whereas we have the following splitting
in the case $i=0$ and $i=n/2$:
\[
  V_{\sqrt{2}A_{n-1}} = V_{\sqrt{2}A_{n-1}}^+\oplus V_{\sqrt{2}A_{n-1}}^-,~~~
  V_{\sqrt{2}A_{n-1}+n\Lambda_{n-1}} = V_{\sqrt{2}A_{n-1}+n\Lambda_{n-1}}^+ \oplus V_{\sqrt{2}A_{n-1}+n\Lambda_{n-1}}^-.
\]
Therefore, the number of inequivalent irreducible $V_{\sqrt{2}A_{n-1}}^+$-modules of
$\sigma$-type is $n/2-1+4=n/2+3$ as in the assertion.
\qed

\begin{rem}\label{rem:4.2}
  It follows from Theorem A.1 of \cite{AYY} that
  the top weight and the dimension of the top level of $V_{\sqrt{2}A_{n-1}+2i\Lambda_{n-1}}$
  are $i(n-i)/n$ and $\binom{n}{i}$, respectively, for $0\leq i<n$.

Recall that for a VOA $V$ and an $\N$-gradable $V$-module $M=\oplus_{i\geq 0} M(i)$ with $M(0)\neq 0$, the lowest degree subspace $M(0)$ is called the \emph{top level} of $M$. The smallest eigenvalue of $L(0)=\omega_{(1)}$ on $M(0)$ is called the \emph{top weight} of $M$, where $\omega$ is the conformal vector of $V$. 
\end{rem}

Since irreducible $V_{\sqrt{2}A_{n-1}}^+$-modules of $\sigma$-type
are also irreducible $V^{(n)}$-modules of $\sigma$-type, we obtain the following.

\begin{cor}\label{cor:4.3}
  There exist at least $(n+3)/2$ irreducible $V^{(n)}$-modules of $\sigma$-type if $n$ is odd,
  and there exist at least $n/2+3$ such modules if $n$ is even.
\end{cor}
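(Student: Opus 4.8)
The plan is to transport the irreducible $\sigma$-type modules counted in Lemma \ref{lem:4.1} along the canonical surjection onto the simple quotient. By Theorem \ref{thm:3.7}, $V^{(n)}$ has a unique simple quotient isomorphic to $V_{\sqrt{2}A_{n-1}}^+$, so there is a surjective VOA homomorphism $\pi:V^{(n)}\to V_{\sqrt{2}A_{n-1}}^+$. At the beginning of this section we identified the Ising vectors of $\mathcal{E}$ with the Ising vectors $\{w^\pm(\al)\mid \al\in\Delta^+_{A_{n-1}}\}$ of $V_{\sqrt{2}A_{n-1}}^+$ via the Griess-algebra isomorphism $B(F_n{:}\mathfrak{S}_n)\cong V^{(n)}_2$; since both Griess algebras have the same dimension $n(n-1)$, the restriction $\pi|_{V_2^{(n)}}$ is an isomorphism carrying each $e\in\mathcal{E}$ to the corresponding Ising vector of $V_{\sqrt{2}A_{n-1}}^+$.

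Given an irreducible $V_{\sqrt{2}A_{n-1}}^+$-module $M$ of $\sigma$-type, I would regard $M$ as a $V^{(n)}$-module by letting $v\in V^{(n)}$ act through $\pi(v)$. First I would observe that, because $\pi$ is surjective, the $V^{(n)}$-submodules of $M$ coincide with its $V_{\sqrt{2}A_{n-1}}^+$-submodules, so $M$ remains irreducible over $V^{(n)}$. Likewise, any $V^{(n)}$-module isomorphism between two such pullbacks intertwines the $V_{\sqrt{2}A_{n-1}}^+$-actions through $\pi$, so inequivalent $V_{\sqrt{2}A_{n-1}}^+$-modules remain inequivalent over $V^{(n)}$; thus the assignment $M\mapsto M$ is injective on isomorphism classes.

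The one point requiring verification is that the pulled-back module is again of $\sigma$-type in the sense of Definition \ref{df:sigma}, now with respect to $\mathcal{E}$. This is exactly where the identification of Ising vectors enters: for each $e\in\mathcal{E}$, the subVOA $\la e\ra$ acts on the $V^{(n)}$-module $M$ precisely as $\la\pi(e)\ra$ acts on $M$ as a $V_{\sqrt{2}A_{n-1}}^+$-module, and $\pi(e)$ is the Ising vector of $V_{\sqrt{2}A_{n-1}}^+$ defining $\sigma$-type there. Hence every irreducible $\la e\ra$-submodule of $M$ is isomorphic to $L(\shf,0)$ or $L(\shf,\shf)$, so $M$ is of $\sigma$-type over $V^{(n)}$. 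Combining this with the count in Lemma \ref{lem:4.1} yields at least $(n+3)/2$ inequivalent irreducible $V^{(n)}$-modules of $\sigma$-type when $n$ is odd and at least $n/2+3$ when $n$ is even. I do not expect a genuine obstacle here; the estimate is only a lower bound precisely because $V^{(n)}$ is not yet known to be simple, so there may be further $\sigma$-type modules that do not factor through $\pi$—a gap that the remainder of the section is designed to close.
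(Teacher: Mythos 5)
Your proof is correct and follows exactly the paper's route: the paper's entire argument is the one-line observation that, via the surjection onto the simple quotient $V_{\sqrt{2}A_{n-1}}^+$ from Theorem \ref{thm:3.7}, irreducible $V_{\sqrt{2}A_{n-1}}^+$-modules of $\sigma$-type pull back to irreducible $V^{(n)}$-modules of $\sigma$-type, so Lemma \ref{lem:4.1} gives the lower bound. You have merely spelled out the details (preservation of irreducibility, inequivalence, and $\sigma$-type under the identification of Ising vectors) that the paper leaves implicit.
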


\medskip

To show that $V^{(n)}$ has exactly $(n+3)/2$ (resp. $n/2+3$) irreducible modules of $\sigma$-type if $n$ is odd (resp. even), we consider some quotient algebra of the Zhu algebra as in \cite[Theorem 4.1]{JLY}.  

Let $ A(V^{(n)})=V^{(n)}/O(V^{(n)})$ be the Zhu algebra of $V^{(n)}$ (cf.~\cite{Z}).
For $u\in V^{(n)}$, we denote the image of $u$ in $A(V^{(n)})$ by $[u]$.
Recall that $A(V^{(n)})$ acts on the top level $M(0)$ of any $\N$-gradable module $M=\oplus_{i\geq 0} M(i)$ and there is a one to one correspondence between irreducible $A(V^{(n)})$-modules and irreducible $V^{(n)}$-modules \cite{Z}.

By Lemma 4.2 of \cite{JLY}, $A(V^{(n)})$ is generated by
$\{[ w^\pm(\al)] \mid \al\in \Delta_{A_{n-1}}^+\}$.
If $x\in V^{(n)}$ is an Ising vector then we have
\[
  [x]\l( [x]-\frac{1}{2}\r)\l( [x]-\frac{1}{16}\r)=0
\]
on any module over $A(V^{(n)})$ 
(cf.~\cite[Proposition 3.3]{DMZ}).

%for any $x\in \{ w^\pm(\al) \mid \al\in \Delta_{A_{n-1}}^+\}$.
Let $I^{(n)}$ be the two-sided ideal of $A(V^{(n)})$ generated by $[x]([x]-1/2)$ with
$x\in \{ w^\pm(\al) \mid \al\in \Delta_{A_{n-1}}^+\}$.
For an element $[a]$ in $A(V^{(n)})$, we denote its image in $A(V^{(n)})/I^{(n)}$ by
$\overline{[a]}$.
Then
\begin{equation}\label{eq:4.3}
  \overline{[w^\epsilon(\al)]}\l(\overline{[w^\epsilon(\al)]}-\frac{1}{2}\r)=0
\end{equation}
for $\al\in \Delta_{A_{n-1}}^+$.
%Since $V^{(n)}$ is generated by Ising vectors of $\sigma$-type,
%it follows from the fusion rules among $L(\shf,0)$-modules that
Clearly, a simple $\N$-gradable $V^{(n)}$-module $M=\oplus_{i\geq 0} M(i)$ is of $\sigma$-type
if and only if its top level $M(0)$ is a simple module over $A(V^{(n)})/I^{(n)}$.
% We have the following result.

For $\al\in \Delta_{A_{n-1}}^{+}$ and $\epsilon=\pm$, set
\begin{equation}\label{eq:4.4}
  s^\epsilon(\alpha):= \overline{[\mathbbm{1}-4 w^\epsilon(\al)]}.
%  \tilde{s}^{\al}=\overline{[\mathbbm{1}-4\tw^{\al}]}.
\end{equation}
By Lemma \ref{lem:prod}, Proposition 3.14 of \cite{JLY} and \eqref{eq:4.3}, one can
modify Lemma 3.3 of \cite{JL} to obtain the following relations.
\begin{equation}\label{eq:4.5}
\begin{array}{l}
  s^\epsilon(\alpha)s^\epsilon(\alpha)=1,~~~
  s^+(\alpha)s^-(\alpha)=s^-(\alpha)s^+(\alpha),
  \medskip\\
  s^\epsilon(\alpha)s^{\epsilon'}(\beta) = s^\epsilon(\beta)s^{\epsilon'}(\alpha)
  ~~\mbox{ if }~~ (\alpha\mymid \beta)=0,
  \medskip\\
  s^{\epsilon'}(\alpha) s^{\epsilon}(\beta) s^\epsilon(\alpha)
  = s^{-\epsilon\epsilon'}(r_\alpha(\beta))
  ~~\mbox{ if }~~ \abs{(\alpha\mymid \beta)}=1,
\end{array}
\end{equation}
where $r_\alpha$ denotes the reflection associated with $\alpha\in \Delta_{A_{n-1}}$.
Since $A(V^{(n)})$ is generated by $[w^\pm(\alpha)]$, 
$A(V^{(n)})/I^{(n)}$ is generated by $s^\pm(\alpha_i)$, $1\leq i<n$, by the relations in \eqref{eq:4.5},
where $\al_1, \dots, \al_{n-1}$ are the simple roots of $\Delta_{A_{n-1}}$ as before.
Indeed, one can verify that
\[
  s^+(\alpha_{i+1}) = s^-(\alpha_i)s^-(\alpha_{i+1})s^+(\alpha_i)s^-(\alpha_{i+1})s^-(\alpha_i)
  ~\mbox{ for }~ 1\leq i<n-1,
\]
so that $A(V^{(n)})/I^{(n)}$ is generated by $s^+(\alpha_1)$ and $s^-(\alpha_i)$, $1\leq i<n$.

We recall from \cite{GP} presentations of the Coxeter groups of type $B_n$ and $D_n$.
Let $W_n$ be the group generated by  $t$, $s_1,\cdots,s_{n-1}$ subject to the following relations
(cf.~Sections 1.4.1 of \cite{GP}):
\begin{equation}\label{eq:4.6}
\begin{array}{l}
  t^2=s_1^2=\cds =s_{n-1}^2=(ts_1)^4=1,~~ts_i=s_it ~\mbox{ for }~ i>1,
  \medskip\\
  s_is_{i+1}s_i=s_{i+1}s_is_{i+1} ~\mbox{ for }~ 1\leq i<n,~~
  s_is_j=s_js_i ~\mbox{ for }~ \abs{i-j}>1.
\end{array}
\end{equation}
Then $s_1,\cdots,s_{n-1}$ generate a subgroup $H_n$ of $W_n$ isomorphic to
the symmetric group $\mathfrak{S}_n$.
Set $t=t_0$ and $t_i=s_it_{i-1}s_i$ for $1\leq i<n$.
Then $t_0,\dots,t_{n-1}$ generate a normal subgroup $N_n$ isomorphic to an elementary
abelian 2-group of order $2^n$, and we have a semidirect product decomposition
$W_n=N_n\rtimes H_n\cong 2^n{:}\mathfrak{S}_n$.
Let $u=ts_1t$ and $W_n'$ the subgroup of $W_n$ generated by $u$, $s_1,\dots,s_{n-1}$.
Then one has
\begin{equation}\label{eq:4.7}
  us_2u=s_2us_2,~~~
  us_i=s_iu ~\mbox{ for }~ i\ne 2.
\end{equation}
Set $u_1=us_1$ and $u_i=s_iu_{i-1}s_i$ for $1< i<n$.
Then $u_i=t_0t_i$ for $1\leq i<n$ so that $u_1,\dots,u_{n-1}$ generate a subgroup $N_n'$
of index 2 in $N_n$ and we have a decomposition
$W_n'=N_n'\rtimes H_n\cong 2^{n-1}{:}\mathfrak{S}_n$ (cf.~Section 1.4.8 of \cite{GP}).
The groups $W_n$ and $W_n'$ are Coxeter groups of type $B_n$ and $D_n$, respectively.

\begin{lem}\label{lem:4.4}
  $A(V^{(n)})/I^{(n)}$ is isomorphic to a quotient of the group algebra of the
  Coxeter group $W_n'$ of type $D_n$.
\end{lem}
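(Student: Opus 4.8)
The plan is to realize $A(V^{(n)})/I^{(n)}$ as a quotient of the group algebra $\C[W_n']$ by constructing a surjective algebra homomorphism $\phi\colon \C[W_n']\to A(V^{(n)})/I^{(n)}$. By the universal property of the group algebra it is enough to define $\phi$ on the Coxeter generators of $W_n'$, to check that the images satisfy the defining relations \eqref{eq:4.7} together with the type $A_{n-1}$ relations among $s_1,\dots,s_{n-1}$ inherited from \eqref{eq:4.6}, and then to observe that $\phi$ is onto. The last point is immediate: it was shown just before the lemma that $A(V^{(n)})/I^{(n)}$ is generated by $s^+(\alpha_1)$ and the $s^-(\alpha_i)$, $1\le i<n$, so surjectivity follows as soon as these elements lie in the image of $\phi$.

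Concretely, I would set $\phi(s_i)=s^-(\alpha_i)$ for $1\le i<n$ and $\phi(u)=s^+(\alpha_1)$, where $\alpha_1,\dots,\alpha_{n-1}$ are the simple roots of $\Delta_{A_{n-1}}$. The elements $s^-(\alpha_1),\dots,s^-(\alpha_{n-1})$ realize the symmetric group part of the diagram: by \eqref{eq:4.5} one has $s^-(\alpha_i)^2=1$, the commutation $s^-(\alpha_i)s^-(\alpha_j)=s^-(\alpha_j)s^-(\alpha_i)$ when $(\alpha_i\mymid\alpha_j)=0$ (i.e.\ $|i-j|>1$), and, since $|(\alpha_i\mymid\alpha_{i+1})|=1$ and $r_{\alpha_i}(\alpha_{i+1})=\alpha_i+\alpha_{i+1}$, the braid identity
\[
 s^-(\alpha_i)s^-(\alpha_{i+1})s^-(\alpha_i)=s^-(\alpha_i+\alpha_{i+1})=s^-(\alpha_{i+1})s^-(\alpha_i)s^-(\alpha_{i+1}).
\]
For the fork node $u=s^+(\alpha_1)$ I would check $u^2=1$, that $u$ commutes with $s_1=s^-(\alpha_1)$ by the relation $s^+(\alpha_1)s^-(\alpha_1)=s^-(\alpha_1)s^+(\alpha_1)$, that $u$ commutes with $s_i=s^-(\alpha_i)$ for $i\ge 3$ because $(\alpha_1\mymid\alpha_i)=0$, and finally the defining fork relation $us_2u=s_2us_2$.

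The heart of the matter is this last relation, the one point where a generator of ``$+$''-type meets one of ``$-$''-type. Using \eqref{eq:4.5} with the pair $\alpha_1,\alpha_2$ (for which $(\alpha_1\mymid\alpha_2)=-1$ and $r_{\alpha_1}(\alpha_2)=r_{\alpha_2}(\alpha_1)=\alpha_1+\alpha_2$), and keeping careful track of the sign exponent $-\epsilon\epsilon'$, one evaluates the two products independently and finds that each collapses to the same element:
\[
 s^+(\alpha_1)s^-(\alpha_2)s^+(\alpha_1)=s^+(\alpha_1+\alpha_2)=s^-(\alpha_2)s^+(\alpha_1)s^-(\alpha_2),
\]
which is precisely $us_2u=s_2us_2$. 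This sign bookkeeping is the only delicate step; the remaining relations involve merely orthogonality or the purely $\mathfrak{S}_n$-type braid moves and are routine. Having verified all of \eqref{eq:4.6}--\eqref{eq:4.7}, $\phi$ extends to a well-defined algebra epimorphism, whence $A(V^{(n)})/I^{(n)}\cong \C[W_n']/\ker\phi$ is a quotient of $\C[W_n']$, as claimed.
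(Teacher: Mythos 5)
Your proof is correct and takes essentially the same route as the paper's: the identical assignment $u\mapsto s^{+}(\alpha_1)$, $s_i\mapsto s^{-}(\alpha_i)$, verification of the type $D_n$ Coxeter relations \eqref{eq:4.6}--\eqref{eq:4.7} via the relations \eqref{eq:4.5}, extension to the group algebra, and surjectivity from the generation statement established just before the lemma. The only difference is that you spell out the relation checks (including the key fork identity $s^{+}(\alpha_1)s^{-}(\alpha_2)s^{+}(\alpha_1)=s^{+}(\alpha_1+\alpha_2)=s^{-}(\alpha_2)s^{+}(\alpha_1)s^{-}(\alpha_2)$, with the correct sign bookkeeping) which the paper dismisses as a direct verification.
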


\pf
Define a map from the generators of the group algebra of $W_n'$ to
those of $A(V^{(n)})/I^{(n)}$ as follows.
\[
\begin{array}{l}
  u\longmapsto s^+(\alpha_1),~~~
  s_i\longmapsto s^-(\alpha_i) ~\mbox{ for }~ 1\leq i<n.
\end{array}
\]
By \eqref{eq:4.5}, one can directly verify that
this map preserves the defining relations \eqref{eq:4.6} and \eqref{eq:4.7} of
the Coxeter group $W_n'$ of type $D_n$ and can be uniquely extended to a homomorphism from
the group algebra to $A(V^{(n)})/I^{(n)}$. It  is surjective as it covers generators.
Thus the claim follows.
\qed
\medskip

\begin{comment}
One can verify that the homomorphism in Lemma \ref{lem:4.4} gives the correspondences
$u_1\longmapsto s^-(\alpha_1)s^+(\alpha_1)$ and
\[
  u_{i+1}\longmapsto s^-(\alpha_1)\cdots s^-(\alpha_i) (s^-(\alpha_{i+1})s^+(\alpha_{i+1}))
  s^-(\alpha_i)\cdots s^-(\alpha_1) ~\mbox{ for }~1\leq i<n-1.
\]
\end{comment}

\begin{df}
A pair $(\lam,\mu)=([\lam_1, \cdots, \lam_r], [\mu_1,\cdots,\mu_s])$ is called
a \emph{bipartition} of $n$ if $\lam$ and $\mu$ are partitions of non-negative integers such that
$|\lam|+|\mu|=\sum\limits_{i=1}^{r}\lam_i+\sum\limits_{j=1}^{s}\mu_j=n$.
\end{df}

It is known that irreducible modules over $W_n$ are in one to one correspondence
with bipartitions of $n$ (cf.~Section 3.4 of \cite{GP}).
For a bipartition $(\lam,\mu)$ of $n$, let $\chi_{(\lam,\mu)}\in {\rm Irr}(W_n)$ be
the irreducible character of $W_n$ constructed as in Definition 5.5.3 of \cite{GP}.
Denote by $\chi'_{(\lam,\mu)}$ the restriction of $\chi_{(\lam,\mu)}$ to $W'_n$.
Since $W'_n$ is a subgroup of $W_n$ of index 2, all irreducible characters of $W_n'$ are
obtained in the decompositions of $\chi'_{(\lam,\mu)}$, and we have the following classification
(cf.~Section 5.6.1 of \cite{GP}).

\begin{thm}\label{gp1}
  Let $(\lambda,\mu)$ be a bipartition of $n$.
  \\
  \textup{(1)}
  If $\lam\neq \mu$, then $\chi'_{(\lam,\mu)}=\chi'_{(\mu,\lam)}\in \mathrm{Irr}(W'_n)$.
  \\
  \textup{(2)}
  If $\lam=\mu$, then $\chi'_{(\lam,\lam)}=\chi'_{(\lam,+)}+\chi'_{(\lam,-)}$
  with two distinct characters $\chi'_{(\lam,\pm)}\in \mathrm{Irr}(W_n')$.
\end{thm}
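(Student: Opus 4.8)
The statement to prove, Theorem~\ref{gp1}, is a classification of the irreducible characters of the Coxeter group $W_n'$ of type $D_n$ obtained by restricting irreducible characters of the hyperoctahedral group $W_n\cong 2^n{:}\mathfrak{S}_n$ of type $B_n$. This is a standard result in Clifford theory applied to the index-2 subgroup $W_n'\leq W_n$, so the plan is to invoke Clifford theory together with the known parametrization of $\mathrm{Irr}(W_n)$ by bipartitions.

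\textbf{The plan.} Since $[W_n:W_n']=2$, restriction of any $\chi\in\mathrm{Irr}(W_n)$ to $W_n'$ is governed by Clifford theory: $\chi|_{W_n'}$ is either irreducible or splits as a sum of two distinct irreducibles, and the two cases are distinguished by whether $\chi$ is fixed under twisting by the nontrivial linear character $\varepsilon$ of $W_n/W_n'$, equivalently whether $\chi\cong\chi\otimes\varepsilon$. First I would identify this sign character $\varepsilon$ explicitly: it is the linear character of $W_n$ that is trivial on $W_n'$ and nontrivial on $W_n\setminus W_n'$, which under the semidirect product structure $W_n=N_n\rtimes H_n$ corresponds to the character detecting the parity of the number of sign-change generators (the coset of $t_0$). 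The key combinatorial input is the effect of tensoring by $\varepsilon$ on the bipartition labels: one shows $\chi_{(\lambda,\mu)}\otimes\varepsilon\cong\chi_{(\mu,\lambda)}$, i.e.\ twisting by the sign character interchanges the two partitions of the bipartition. This is precisely the content of Proposition~5.5.4 and the discussion in Section~5.6.1 of \cite{GP}, and it immediately organizes the two cases.

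\textbf{Carrying out the two cases.} With the identification $\chi_{(\lambda,\mu)}\otimes\varepsilon\cong\chi_{(\mu,\lambda)}$ in hand, the dichotomy of Clifford theory reads off directly from the labels. In case (1), if $\lambda\neq\mu$ then $\chi_{(\lambda,\mu)}\not\cong\chi_{(\mu,\lambda)}=\chi_{(\lambda,\mu)}\otimes\varepsilon$, so $\chi_{(\lambda,\mu)}$ is \emph{not} $\varepsilon$-fixed; Clifford theory then forces $\chi'_{(\lambda,\mu)}=\chi_{(\lambda,\mu)}|_{W_n'}$ to remain irreducible, and moreover $\chi_{(\lambda,\mu)}$ and $\chi_{(\mu,\lambda)}$ restrict to the \emph{same} irreducible character of $W_n'$ since they lie in one $W_n'$-orbit of induction, giving $\chi'_{(\lambda,\mu)}=\chi'_{(\mu,\lambda)}$. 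In case (2), if $\lambda=\mu$ then $\chi_{(\lambda,\lambda)}\otimes\varepsilon\cong\chi_{(\lambda,\lambda)}$, so $\chi_{(\lambda,\lambda)}$ \emph{is} $\varepsilon$-fixed, and Clifford theory then forces the restriction to split into two distinct irreducible constituents $\chi'_{(\lambda,+)}$ and $\chi'_{(\lambda,-)}$ of equal degree. A dimension count then confirms that these restrictions and splittings exhaust $\mathrm{Irr}(W_n')$, so the parametrization is complete.

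\textbf{The main obstacle.} The genuinely nontrivial step is establishing the twisting rule $\chi_{(\lambda,\mu)}\otimes\varepsilon\cong\chi_{(\mu,\lambda)}$, which requires understanding the explicit construction of $\chi_{(\lambda,\mu)}$ via induction from Young subgroups of $W_n$ (Definition~5.5.3 of \cite{GP}) and tracking how the sign character $\varepsilon$ interacts with that induction. Everything else is formal Clifford theory plus a degree comparison. Since the excerpt explicitly cites Sections~3.4, 5.5, and 5.6.1 of \cite{GP} for exactly this material, I would simply quote the relevant statements there rather than reconstruct the character-theoretic computation; the proof of Theorem~\ref{gp1} is therefore a short appeal to \cite{GP} once the sign character $\varepsilon$ cutting out $W_n'$ inside $W_n$ is identified.
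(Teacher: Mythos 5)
Your proposal is correct, and it matches the paper's treatment: the paper gives no independent proof of Theorem~\ref{gp1}, but states it as a known classification cited directly from Section~5.6.1 of \cite{GP}, which is exactly where your argument also bottoms out (the Clifford-theory dichotomy plus the twisting rule $\chi_{(\lam,\mu)}\otimes\varepsilon\cong\chi_{(\mu,\lam)}$ are the content of that reference). Your sketch of the index-2 Clifford theory is a sound account of the mechanism behind the cited result, so this is essentially the same approach, just with the standard machinery made explicit.
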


By Lemma \ref{lem:4.4} and Theorem \ref{gp1}, each irreducible module $M$ over
$A(V^{(n)})/I^{(n)}$ corresponds to some character $\chi'_{(\lam,\mu)}$ or $\chi'_{(\lam,\pm)}$ if $\lam=\mu$.
We will not distinguish $\chi_{(\lam,\mu)}$ and $\chi'_{(\lam,\mu)}$ if $\lam\neq \mu$.
%So does $A(V_{\sqrt{2}A_{n-1}}^+)/I^{(n)}$.
%\blue{We denote the corresponding irreducible modules over $A(V^{(n)})/I^{(n)}$ by
%$(M, (\lam,\mu))$ in the case $\lambda \ne \mu$ and
%$(M, (\lam,\pm))$ if $\lam=\mu$. }
We may also assume that $\abs{\lam}\geq \abs{\mu}$.

\begin{lem}\label{lem:4.6}
  Let $(\lam,\mu)=([\lam_1, \cdots, \lam_r], [\mu_1,\cdots,\mu_s])$ be a bipartition of $n$
  which corresponds to an irreducible module over $A(V^{(n)})/I^{(n)}$.
  Then $r+s\leq 2$.
\end{lem}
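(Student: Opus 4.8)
The plan is to induct on $n$ and, for a fixed irreducible module $M=\oplus_{i\geq 0}M(i)$ over $A(V^{(n)})/I^{(n)}$ whose top level $M(0)$ realizes the $W_n'$-representation attached to $(\lambda,\mu)$, to rule out three or more parts by combining a global top-weight estimate with a local obstruction in the Zhu algebra. First I would record the top-weight identity obtained by combining \eqref{eq:2.3}, Lemma \ref{lem:3.13} and the substitution $\overline{[w^\epsilon(\alpha)]}=\tfrac14(1-s^\epsilon(\alpha))$ forced by \eqref{eq:4.4}: tracing $\mathrm{o}(\w_V)=\tfrac{8}{k_G+8}\sum_{i\in I_G}\mathrm{o}(x^i)$ over $M(0)$ and using Lemma \ref{lem:3.13} gives $h=\tfrac{n-1}{2}\bigl(1-\chi_{(\lambda,\mu)}(s)/\chi_{(\lambda,\mu)}(1)\bigr)$ for a reflection $s$, where $h$ is the top weight of $M$. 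Combined with Lemma \ref{lem:3.14} applied to the type-$D_{n-1}$ subgroup $H$ (whose VOA $W=\la\mathcal{E}'\ra$ is, by the inductive hypothesis, $V_{\sqrt{2}A_{n-2}}^+$, so that the top weights of its $\sigma$-type submodules are bounded as in Remark \ref{rem:4.2}), this controls $h$ and already excludes the ``tall'' bipartitions such as $(\,[1^n],\varnothing\,)$, whose reflection character $-1$ forces $h=n-1$, violating the bound for $n\geq 3$.

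The crux is the ``wide'' many-part bipartitions, for instance $(\,[n-2,1,1],\varnothing\,)$, whose top weight is only $h=2$: these are invisible to the top-weight bound, so a finer argument is needed. Here I would use that the number of parts is detected \emph{locally}. By the branching rule for $W_n'$ restricted to the parabolic reflection subgroup generated by a few consecutive simple reflections --- of type $A_2$ (an $\mathfrak{S}_3$) or $A_3\cong D_3$ --- a bipartition with at least three parts necessarily contains, upon restriction, the sign-type representation on which every reflection acts by $-1$; equivalently, $\overline{[w]}$ acts by $\tfrac12$ for every Ising vector of the corresponding local configuration. Let $U=\la\mathcal{E}''\ra$ be the sub-VOA generated by these consecutive Ising vectors. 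The inclusion $U\hookrightarrow V^{(n)}$ induces an algebra homomorphism $A(U)/I(U)\to A(V^{(n)})/I^{(n)}$, so $M(0)$ restricts to a module over $A(U)/I(U)$ and, by Zhu's correspondence, each of its irreducible constituents is the top level of a $\sigma$-type $U$-module. Since the forbidden sign-type representation is \emph{not} realized on the top level of any $\sigma$-type module of the local algebra (this is the type-$A$ statement for $K(A_2,2)$, respectively the $n=3$ base case of the present lemma for $V_{\sqrt{2}A_2}^+$), its appearance in $M(0)$ is impossible; hence $(\lambda,\mu)$ has at most two parts.

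The main obstacle is twofold. First, making the combinatorial reduction precise for \emph{bi}partitions: I must verify that ``at least three parts'' is exactly equivalent to the containment of the designated sign-type local representation, taking proper account of the distinction between the $\lambda$- and $\mu$-parts (governed by the generator $u=s^+(\alpha_1)$ versus the $s_i=s^-(\alpha_i)$), of the index-$2$ passage from $W_n$ to $W_n'$, and of the split characters $\chi'_{(\lambda,\pm)}$ of Theorem \ref{gp1}. Second, and most importantly, establishing the local Zhu-algebra fact that the sign-type representation does not occur among $\sigma$-type modules of $U$; this is where the explicit relations \eqref{eq:4.5}, the products of Lemma \ref{lem:prod}, and the module classification of the small VOAs $K(A_2,2)$ and $V_{\sqrt{2}A_2}^+$ enter, and where the induction on $n$ is anchored. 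Once these two points are in place, the bound $r+s\leq 2$ follows uniformly.
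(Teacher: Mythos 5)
Your proposal works, and its engine is ultimately the same as the paper's --- push the top level $M(0)$ down to a type-$A$ subVOA $K(A_m,2)$ through the Zhu-algebra homomorphism induced by inclusion, then invoke the classification of $\sigma$-type $K(A_m,2)$-modules from \cite{JL} --- but you run it \emph{locally} ($m=2$) where the paper runs it \emph{globally} ($m=n-1$), and this changes the shape of the argument. The paper restricts the $W_n'$-character to the full subgroup $H_n\cong\mathfrak{S}_n$ generated by the $s^-(\alpha_i)$, observes via the Littlewood--Richardson rule \eqref{eq:4.8} that the concatenation $[\lambda_1,\dots,\lambda_r,\mu_1,\dots,\mu_s]$, a partition with $r+s$ rows, occurs in that restriction, and concludes from \cite{JL} (only partitions with at most two rows are compatible with $\sigma$-type $K(A_{n-1},2)$-modules) that $r+s\leq 2$; no induction and no top-weight estimates are needed. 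Your version needs only the rank-two input --- the sign representation of $\mathfrak{S}_3$ does not occur on the top level of any $\sigma$-type $K(A_2,2)$-module, where $\la w^-(\alpha_i),w^-(\alpha_{i+1})\ra\cong K(A_2,2)$ is supplied directly by \cite[Theorem 4.1]{JLY}, not by induction --- but pays for it with the branching combinatorics you flag. Both flagged obstacles are in fact resolvable: for $\lambda\neq\mu$ the claim is just \eqref{eq:4.8} followed by ordinary $\mathfrak{S}_n\downarrow\mathfrak{S}_3$ branching (a partition with at least three rows contains $[1,1,1]$, so its restriction contains the sign character), and for the split case $\lambda=\mu$ of Theorem \ref{gp1} you may choose the parabolic $\la s_i,s_{i+1}\ra$ with $i\geq 2$, which is centralized by $t_0$; then $\chi'_{(\lambda,+)}$ and $\chi'_{(\lambda,-)}$ have equal restrictions to it, and since their sum contains the sign character, each does. (The paper's global argument faces the same split-case point, since $t_0$ does not normalize $H_n$; your local choice actually handles it more cleanly.) Finally, two pieces of your plan are superfluous: the top-weight identity $h=\frac{n-1}{2}\bigl(1-\chi_{(\lambda,\mu)}(s)/\chi_{(\lambda,\mu)}(1)\bigr)$ combined with Lemma \ref{lem:3.14} eliminates only bipartitions that the local sign obstruction already eliminates, and the induction on $n$ enters only through that redundant step (and through a misplaced idea that the $K(A_2,2)$ fact is an inductive base case --- it is an external input from \cite{JL}, since $V^{(n)}$ is of type $D$ and has no $n=3$ instance). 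Dropping both leaves a self-contained proof and keeps this lemma independent of the simplicity results it feeds into.
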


\pf
Let $|\lam|=a$ and $|\mu|=b$ with $a\geq b$.
%Then $a+b=n$.
By Lemma 6.1.4 of \cite{GP}, we have
\begin{equation}\label{eq:4.8}
  \mathrm{Res}_{H_n}^{W_n} \chi_{(\lam,\mu)}
  =\mathrm{Ind}_{\mathfrak{S}_a\times \mathfrak{S}_b}^{\mathfrak{S}_n}(\chi_{\lam}\boxtimes \chi_{\mu})
  =\sum_{\nu\in\mathrm{Irr}\,\mathfrak{S}_n}c_{\lam\mu}^{\nu}\,\chi_{\nu},
\end{equation}
where $\chi_{\lam}\in \mathrm{Irr}(\mathfrak{S}_a)$ and
$\chi_{\mu}\in \mathrm{Irr}(\mathfrak{S}_b)$ are corresponding irreducible characters of
symmetric groups and $c_{\lam\mu}^{\nu}$ are the Littlewood-Richardson coefficients
(cf.~Section 6.1.6 of \cite{GP}).
In particular, $c_{\lam\mu}^{\nu}\neq 0$ if $\nu$ is a partition reordered from
$[\lam_1, \cdots, \lam_r, \mu_1,\cdots,\mu_s]$.
It follows from Lemma \ref{lem:4.4} that the subalgebra $S$ of $A(V^{(n)})/I^{(n)}$ generated by
$s^-(\alpha)$, $\alpha\in \Delta^+_{A_{n-1}}$, is isomorphic to a quotient of
the group algebra of $H_n\cong \mathfrak{S}_n$.
By \cite[Theorem 4.1]{JLY}, the subalgebra $U$ of $V^{(n)}$ generated by
$w^-(\al)$, $\al\in \Delta^{+}_{A_{n-1}}$, is isomorphic to $K(A_{n-1},2)$,
and there is a homomorphism from the Zhu algebra $A(U)$ to the subalgebra $S$.
Therefore, irreducible characters $\chi_\nu$ of $H_n$ in \eqref{eq:4.8}
correspond to some irreducible $K(A_{n-1},2)$-submodules of $\sigma$-type.
The irreducible modules over $K(A_{n-1},2)$ are classified in \cite{JL} and
it turns out that only $H_n$-submodules in \eqref{eq:4.8} labeled by partitions
$\nu=[\nu_1,\cdots,\nu_m]$ of $n$ such that $m\leq 2$ could correspond to some
irreducible $K(A_{n-1},2)$-submodule of $\sigma$-type
(cf.~Lemmas 4.2, 4.8 and Theorem 4.16 of \cite{JL}).
Thus we obtain $r+s\leq 2$.
\qed

Before we give some bipartitions of $n=4$,
which correspond to the irreducible modules of $A(V^{(4)})/I^{(4)}$,
we will first prove the following result.

\begin{lem}\label{lem:4.7}
  $V^{(4)}$ is simple and isomorphic to $K(D_4,2)\cong V_{\sqrt{2}A_{3}}^+$.
\end{lem}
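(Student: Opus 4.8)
The plan is to establish $V^{(4)}$ is simple by bounding the number of irreducible $V^{(4)}$-modules of $\sigma$-type from above and matching it against the lower bound from Corollary \ref{cor:4.3}. Since $V^{(4)}$ is generated by Ising vectors of $\sigma$-type with $G\cong F_4{:}\mathfrak{S}_4$, the Griess algebra is $B(F_4{:}\mathfrak{S}_4)$, which by Lemma \ref{lem:6.1} is non-degenerate, so $V^{(4)}$ has the unique simple quotient $V_{\sqrt{2}A_3}^+\cong K(D_4,2)$. To prove simplicity it suffices to show that the radical of $V^{(4)}$ is zero, and for this I would show that every irreducible $V^{(4)}$-module of $\sigma$-type is in fact a $V_{\sqrt{2}A_3}^+$-module. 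Equivalently, I want to count the simple modules over $A(V^{(4)})/I^{(4)}$ and show there are exactly $n/2+3 = 5$ of them (since $n=4$ is even), matching the lower bound from Corollary \ref{cor:4.3}.

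First I would use the structural results already assembled: by Lemma \ref{lem:4.4}, $A(V^{(4)})/I^{(4)}$ is a quotient of the group algebra $\C[W_4']$ of the Coxeter group of type $D_4$. By Theorem \ref{gp1} the irreducible $\C[W_4']$-modules are indexed by bipartitions of $4$ (with the $\lambda=\mu$ case splitting into a pair $\chi'_{(\lambda,\pm)}$). Then by Lemma \ref{lem:4.6}, only those bipartitions $(\lambda,\mu)$ with $r+s\le 2$ can descend to irreducible modules of $A(V^{(4)})/I^{(4)}$. Enumerating the bipartitions of $4$ with at most two total parts gives a short explicit list; I would write it out and identify which ones survive the constraint, keeping track of the splitting in Theorem \ref{gp1}(2) when $\lambda=\mu$.

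Next, for each surviving candidate bipartition I would compute the top weight of the corresponding module using Lemma \ref{lem:3.14}: each such module has a predicted top weight, and the inequality $h\le \lambda(k_H+8)\abs{I_G}/((k_G+8)\abs{I_H})$ restricts which representations can actually carry a $\sigma$-type $V^{(4)}$-module structure with a valid (nonnegative, real) top weight. Using the parameters from Table 1 ($\abs{I_G}=n(n+1)=20$, $k_G=4(n-1)=12$ for $n=4$) together with the top-weight data from Remark \ref{rem:4.2} (top weights $i(n-i)/n$ and dimensions $\binom{n}{i}$), I would cross-check the surviving list against the known five irreducible $V_{\sqrt{2}A_3}^+$-modules of $\sigma$-type. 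The goal is to show the upper bound on the count is also $5$, so the inequality in Corollary \ref{cor:4.3} is an equality.

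The main obstacle I expect is the bipartition-by-bipartition bookkeeping: correctly applying the branching in \eqref{eq:4.8} and the $\lambda=\mu$ splitting of Theorem \ref{gp1}, and then verifying that exactly the right candidates are excluded by the weight inequality of Lemma \ref{lem:3.14}, so that no spurious extra irreducible $\sigma$-type module survives. Once the count of irreducible $\sigma$-type $A(V^{(4)})/I^{(4)}$-modules is pinned at $5$, it equals the number of irreducible $V_{\sqrt{2}A_3}^+$-modules of $\sigma$-type, forcing every irreducible $\sigma$-type $V^{(4)}$-module to factor through the simple quotient; hence the maximal proper ideal of $V^{(4)}$ acts trivially on all $\sigma$-type modules, and since $V^{(4)}$ is itself a $\sigma$-type module, the radical vanishes and $V^{(4)}\cong V_{\sqrt{2}A_3}^+$ is simple.
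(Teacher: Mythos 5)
Your proposal inverts the paper's logical order, and the two steps where it departs from the paper both break down. The first gap is in pinning the count at $5$: the weight inequality of Lemma \ref{lem:3.14} cannot exclude the one spurious bipartition $([2,2],\varnothing)$. If that bipartition occurred, its top level would be the $2$-dimensional $\mathfrak{S}_4$-module with character $\chi_{[2,2]}$, on which every generator $s^{\pm}(\alpha)$ has trace $0$; hence each $[w^{\pm}(\alpha)]=\tfrac14(1-s^{\pm}(\alpha))$ has trace $\tfrac12$, so $\tr\,[\omega]=\tfrac{8}{k_G+8}\abs{I_G}\cdot\tfrac12=\tfrac{8}{16}\cdot 12\cdot\tfrac12=3$ and the predicted top weight is $h=3/2$. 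This is \emph{exactly} the bound $\lambda(k_H+8)\abs{I_G}/((k_G+8)\abs{I_H})=1\cdot 12\cdot 12/(16\cdot 6)=3/2$ coming from $H\cong\mathfrak{S}_4$, $W\cong K(A_3,2)\cong V_{\sqrt{2}A_2}^+$, so the inequality leaves six candidates, not five. In the paper, this bipartition is eliminated only in Lemma \ref{lem:4.8}, whose proof computes characters of the top levels of the irreducible modules of the \emph{simple} VOA $V_{\sqrt{2}A_3}^+$, i.e., it uses Lemma \ref{lem:4.7} as an input; trying to obtain the count first is therefore circular relative to the paper's development.

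The second gap is the final inference from the count to simplicity. Knowing that every irreducible $\sigma$-type $V^{(4)}$-module factors through the simple quotient only tells you that the maximal ideal $J_1$ annihilates each \emph{composition factor} of $V^{(4)}$, i.e., $J_1\cdot J_i\subseteq J_{i+1}$ along a composition series; it does not force $J_1=0$, since $V^{(4)}$ is not known to be semisimple over itself (indeed $a_{(-1)}\vac=a\neq 0$ for $0\neq a\in J_1$, so $J_1$ never ``acts trivially'' on $V^{(4)}$). This is precisely why the paper, for $n\geq 5$, still needs the commutant, rationality and fusion-rule arguments of Lemmas \ref{lem:4.10}--\ref{lem:4.13} and Theorem \ref{thm:4.14} \emph{after} the module count of Lemma \ref{lem:4.9} is in hand. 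The paper's actual proof of Lemma \ref{lem:4.7} avoids both problems by applying Lemma \ref{lem:3.14} to the maximal ideal $J$ itself: non-degeneracy of $B(F_4{:}\mathfrak{S}_4)$ forces the Griess algebras of $V^{(4)}$ and its simple quotient to agree, so $J$ would have integral top weight $h\geq 3$, while the inequality gives $h\leq 3/2$ --- a contradiction. The argument is effective only because the ideal's top weight is an integer $\geq 3$; against general candidate modules with fractional weights up to $3/2$, the same inequality has no force.
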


\pf
Suppose $V^{(4)}$ is not simple.
Then there exists a non-trivial ideal $J$ of $V^{(4)}$ with the top weight $h\in \Z_{\geq 3}$
such that $V^{(4)}/J\cong V_{\sqrt{2}A_3}^+$.
Recall that the set $E_{V^{(4)}}$ of Ising vectors of $V^{(4)}$ can be identified with the set
$\{ w^-(\alpha) \mid \alpha \in \Delta_{D_4}^+\}$ and
$G=\la \sigma_e\mid e\in E_{V^{(n)}}\ra \cong F_4{:}\mathfrak{S}_4$.
Let $\{\beta_1,\beta_2,\beta_3,\beta_4\}$ be a simple roots of $\Delta_{D_4}$ such that
$(\beta_2\mymid \beta_i)=-1$ for $i\ne 2$.
Let $\Delta_1$ be the root system generated by $\{\beta_1,\beta_2,\beta_3\}$.
Then $\Delta_1$ is a root system of type $A_3$.
Let $\mathcal{E}'=\{ w^-(\alpha) \mid \alpha\in \Delta_1^+\}$ and $W=\la \mathcal{E}'\ra$.
It follows from \cite[Theorem 4.1]{JLY} and Remark \ref{rem:3.9} that $W$ is isomorphic to
$K(A_3,2)\cong V_{\sqrt{2}A_2}^+$.
The corresponding subgroup of $G$ is $H=\la \sigma_e \mid e\in \mathcal{E}'\ra\cong \mathfrak{S}_4$.
By Lemma \ref{lem:4.1} and Remark \ref{rem:4.2}, $W\cong V_{\sqrt{2}A_2}^+$ has three
irreducible modules of $\sigma$-type with top weights $0$, $2/3$ and $1$.
Applying Lemma \ref{lem:3.14} with $\abs{I_G}=12$, $\abs{I_H}=6$,
$k_G=8$, $k_H=4$ and $\lambda=1$, we obtain $h\leq 3/2$, a contradiction.
Therefore, $V^{(4)}$ is simple and isomorphic to $K(D_4,2)\cong V_{\sqrt{2}A_3}^+$
by Theorem \ref{thm:3.7}.
\qed
\medskip

By Lemmas \ref{lem:4.1} and \ref{lem:4.7}, $A(V^{(4)})/I^{(4)}$ has five irreducible modules
which are the top levels of the following five irreducible modules over $V^{(4)}$ of $\sigma$-type.
\[
  V_{\sqrt{2}A_3}^+, ~~~
  V_{\sqrt{2}A_3}^-, ~~~
  V_{\sqrt{2}A_3+2\Lambda_3},~~~
  V_{\sqrt{2}A_3+4\Lambda_3}^+,~~~
  V_{\sqrt{2}A_3+4\Lambda_3}^-.
\]
%which we denote by $M(i)$, $i=1,2,3,4,5$, respectively.
%Then we have the following lemma.

\begin{lem}\label{lem:4.8}
  The correspondence between the irreducible modules of
  $A(V^{(4)})/I^{(4)}$ and bipartitions of $4$ is as follows.
  \[
  \begin{array}{l}
     V_{\sqrt{2}A_3}^+ \longleftrightarrow ([4], \varnothing),~~~~~
     V_{\sqrt{2}A_3}^- \longleftrightarrow ([3,1], \varnothing),
     \medskip\\
     V_{\sqrt{2}A_3+2\Lambda_3} \longleftrightarrow  ([3], [1]),~~~~~
     V_{\sqrt{2}A_3+4\Lambda_3}^\pm \longleftrightarrow  ([2],\pm)
     \end{array}
  \]
\end{lem}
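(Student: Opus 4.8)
The plan is to establish the correspondence in Lemma \ref{lem:4.8} by matching invariants on both sides: on the VOA side, each of the five irreducible $\sigma$-type modules over $V^{(4)}\cong V_{\sqrt{2}A_3}^+$ has a definite top weight and top-level dimension (computable from Remark \ref{rem:4.2} with $n=4$), while on the group-algebra side, each bipartition of $4$ with $r+s\leq 2$ (by Lemma \ref{lem:4.6}) labels an irreducible $W_4'$-module of a definite dimension. First I would enumerate the admissible bipartitions: by Lemma \ref{lem:4.6} the parts satisfy $r+s\leq 2$, and since $|\lambda|\geq|\mu|$ the candidates are $([4],\varnothing)$, $([3,1],\varnothing)$, $([3],[1])$, and $([2],[2])$; the last splits into $([2],+)$ and $([2],-)$ by Theorem \ref{gp1}(2) since $\lambda=\mu=[2]$, giving exactly five irreducibles, matching Lemma \ref{lem:4.1}.

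Next I would compute the two numerical invariants. Using Remark \ref{rem:4.2} with $n=4$, the modules $V_{\sqrt{2}A_3+2i\Lambda_3}$ have top weight $i(4-i)/4$ and top-level dimension $\binom{4}{i}$: thus $i=0$ gives weight $0$ and dimension $1$, which must split into $V_{\sqrt{2}A_3}^\pm$; $i=1$ gives weight $3/4$ and dimension $4$ for $V_{\sqrt{2}A_3+2\Lambda_3}$; and $i=2$ gives weight $1$ and dimension $6$, splitting into $V_{\sqrt{2}A_3+4\Lambda_3}^\pm$ of dimension $3$ each. On the group side I would compute the dimensions $\chi'_{(\lambda,\mu)}(1)$ of the corresponding $W_4'$-characters via the branching formula \eqref{eq:4.8} and the known dimensions of the $\mathfrak{S}_4$-irreducibles. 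The dimension count alone distinguishes $([4],\varnothing)$ and $([3,1],\varnothing)$ (the two weight-$0$ modules) from $([3],[1])$ (dimension $4$) and from $([2],\pm)$ (dimension $3$).

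The main obstacle is that dimension counting does not by itself separate the two modules sitting over a common $V_{\sqrt{2}A_3}$-decomposition, namely distinguishing $V_{\sqrt{2}A_3}^+$ from $V_{\sqrt{2}A_3}^-$ (both appearing inside $V_{\sqrt{2}A_3}$) and likewise $V_{\sqrt{2}A_3+4\Lambda_3}^+$ from $V_{\sqrt{2}A_3+4\Lambda_3}^-$. To pin down these assignments I would use the action of a distinguished element: the adjoint module $V^{(4)}$ itself must correspond to the trivial-type character, forcing $V_{\sqrt{2}A_3}^+\leftrightarrow([4],\varnothing)$ since $([4],\varnothing)$ restricts to the trivial character of $H_4\cong\mathfrak{S}_4$ and $s^-(\alpha)$ acts as $\overline{[\vac-4w^-(\alpha)]}$ which equals $+1$ on the vacuum-type eigenspace. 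The remaining label $([3,1],\varnothing)$ then goes to $V_{\sqrt{2}A_3}^-$ by elimination. For the pair $([2],\pm)$ I would track the eigenvalue of the central involution $u_1\cdots u_3\in N_4'$ (equivalently a product of the $\sigma$-involutions) or the sign of the $s^+(\alpha_1)$-action, which takes opposite values on the two modules by Theorem \ref{gp1}(2); the self-conjugate bipartition forces the splitting to match the $\pm$-decomposition of $V_{\sqrt{2}A_3+4\Lambda_3}$ under the relevant lattice automorphism. Once the three numerically-determined assignments are fixed and the two $\pm$-pairs are separated by an eigenvalue computation, the correspondence is forced.
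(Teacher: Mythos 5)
Your overall strategy (enumerate admissible bipartitions, then match invariants of top levels) is the same as the paper's, but the invariant you choose --- dimension of the top level --- is too weak, and this creates two genuine gaps. First, your enumeration is incomplete: Lemma \ref{lem:4.6} only bounds the total number of parts by $2$, so $([2,2],\varnothing)$ is also admissible ($r=2$, $s=0$), giving six candidate labels, not five. Your claim of ``exactly five irreducibles, matching Lemma \ref{lem:4.1}'' therefore does not follow from the enumeration alone; $([2,2],\varnothing)$ must be excluded at the matching stage (it has dimension $2$ and no top level is $2$-dimensional). This is not cosmetic: the exclusion of $([2,2],\varnothing)$ is precisely what the base case of the induction in Lemma \ref{lem:4.9} needs from Lemma \ref{lem:4.8}.

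Second, and more seriously, dimension counting cannot separate the three modules $V_{\sqrt{2}A_3}^-$, $V_{\sqrt{2}A_3+4\Lambda_3}^+$, $V_{\sqrt{2}A_3+4\Lambda_3}^-$ from the three labels $([3,1],\varnothing)$, $([2],+)$, $([2],-)$: all six objects have dimension $3$. Your sentence asserting that dimension distinguishes $([3,1],\varnothing)$ from $([2],\pm)$ is false, and consequently the step ``$([3,1],\varnothing)$ then goes to $V_{\sqrt{2}A_3}^-$ by elimination'' is circular --- elimination only works after $([2],\pm)$ have been pinned to $V_{\sqrt{2}A_3+4\Lambda_3}^\pm$, which you have not done. (You also misidentify the obstacle: $V_{\sqrt{2}A_3}^+$ versus $V_{\sqrt{2}A_3}^-$ is trivially settled by dimensions $1$ versus $3$; the real issue is $V_{\sqrt{2}A_3}^-$ versus the pair $V_{\sqrt{2}A_3+4\Lambda_3}^\pm$.) The paper resolves this by computing the full $H_4\cong\mathfrak{S}_4$-module characters of the top levels, not just their dimensions: the top level of $V_{\sqrt{2}A_3+4\Lambda_3}^+$ has character $\chi_{[4]}+\chi_{[2,2]}$, and the presence of $\chi_{[2,2]}$ forces, via the restriction rule \eqref{eq:4.8}, a label with $\mu\neq\varnothing$, i.e.\ $([2],\pm)$. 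Even this leaves $V_{\sqrt{2}A_3}^-$ and $V_{\sqrt{2}A_3+4\Lambda_3}^-$ (both with $\mathfrak{S}_4$-character $\chi_{[3,1]}$) to be separated, which requires seeing how the $s^+(\alpha)$ act: on the top level of $V_{\sqrt{2}A_3}^-$ (spanned by $h_{(-1)}\vac$, $h$ in the Cartan subalgebra) the exponential parts of $w^\pm(\alpha)$ annihilate everything, so $s^+(\alpha)=s^-(\alpha)$ there and the normal subgroup $N_4'$ acts trivially, forcing $\mu=\varnothing$; this fails on the top levels of $V_{\sqrt{2}A_3+4\Lambda_3}^\pm$. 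Your proposed eigenvalue computation on elements of $N_4'$ is exactly the right kind of tool, but you aim it only at distinguishing $([2],+)$ from $([2],-)$ --- which is merely a labeling convention (see the paper's footnote) --- rather than at the place where it is actually needed.
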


\pf
Consider the subalgebra $S$ of $A(V^{(4)})/I^{(4)}$ generated by
$s^-(\alpha)$, $\alpha\in \Delta_{A_3}^+$.
Then $S$ is a quotient of the group algebra of $H_4\cong \mathfrak{S}_4$.
By explicit calculations, one can determine the characters of the top levels of irreducible
$V^{(4)}\cong V_{\sqrt{2}A_3}^+$-modules of $\sigma$-type as $H_4$-module as follows\footnote{%
Strictly speaking, there is an ambiguity to assign labelings $\pm$ to $V_{\sqrt{2}A_3+4\Lambda_3}^\pm$.
We just assign $\pm$ so that we can get the decomposition as the displayed.}.
\[
\begin{array}{ccccccccc}
  V_{\sqrt{2}A_3}^+ && V_{\sqrt{2}A_3}^- && V_{\sqrt{2}A_3+2\Lambda_3} && V_{\sqrt{2}A_3+4\Lambda_3}^+ &&
  V_{\sqrt{2}A_3+4\Lambda_3}^-
  \medskip\\
  \chi_{[4]} && \chi_{[3,1]} && \chi_{[4]}+\chi_{[3,1]} && \chi_{[4]}+\chi_{[2,2]} && \chi_{[3,1]}
\end{array}
\]
Now the assertion follows from Lemma \ref{lem:4.6} and the rule \eqref{eq:4.8}.
\qed

\begin{lem}\label{lem:4.9}
%  $A(V^{(n)})/I^{(n)}\cong A(V_{\sqrt{2}A_{n-1}}^+)/I^{(n)}$.
  The number of irreducible $V^{(n)}$-module of $\sigma$-type is
  exactly $(n+3)/2$ if $n$ is odd and $n/2+3$ if $n$ is even.
  In particular, the simple quotient of $V^{(n)}$ acts on each irreducible
  $V^{(n)}$-module of $\sigma$-type.
\end{lem}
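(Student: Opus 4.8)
The plan is to show that the number of irreducible $V^{(n)}$-modules of $\sigma$-type is \emph{at most} the lower bound already established in Corollary \ref{cor:4.3}, so that the two bounds coincide. By Lemma \ref{lem:4.4}, every irreducible $A(V^{(n)})/I^{(n)}$-module corresponds to an irreducible character of the Coxeter group $W_n'$ of type $D_n$, and by Theorem \ref{gp1} these characters are labeled by bipartitions $(\lambda,\mu)$ of $n$ (with the labels $(\lambda,+),(\lambda,-)$ when $\lambda=\mu$). The key constraint is Lemma \ref{lem:4.6}, which forces $r+s\leq 2$ where $r,s$ are the numbers of parts of $\lambda,\mu$. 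I would first enumerate all bipartitions of $n$ satisfying this constraint and assuming $\abs{\lambda}\geq \abs{\mu}$.

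First I would list the admissible bipartitions. Since each of $\lambda,\mu$ has at most two parts total between them, the possibilities are: $(\lambda,\mu)=([n],\varnothing)$; $([n-1,1],\varnothing)$; $([n-i],[i])$ for $1\leq i\leq \lfloor n/2\rfloor$; and the self-conjugate case $([n/2],[n/2])$ when $n$ is even, which by Theorem \ref{gp1}(2) splits into two irreducible characters $([n/2],\pm)$. Counting these: the two partitions of $[n]$-type give $2$; the family $([n-i],[i])$ with $1\leq i\leq \lfloor n/2\rfloor$ gives $\lfloor n/2\rfloor$ characters, except that when $n$ is even the top member $i=n/2$ is the self-conjugate one that must be counted as a single restriction but splits into two. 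I would carefully separate the odd and even cases to match the exact formulas $(n+3)/2$ and $n/2+3$.

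For $n$ odd, there is no self-conjugate bipartition with two equal parts, so the count is $2+(n-1)/2=(n+3)/2$, matching Corollary \ref{cor:4.3}. For $n$ even, the family $([n-i],[i])$ with $1\leq i\leq n/2-1$ contributes $n/2-1$ characters, the self-conjugate case $([n/2],[n/2])$ contributes $2$ (the $\pm$ split), and the two $[n]$-type partitions contribute $2$, giving $(n/2-1)+2+2=n/2+3$, again matching. Since each of these bipartitions survives the restriction imposed by Lemma \ref{lem:4.6} and, conversely, every $\sigma$-type irreducible gives such a bipartition, the number of irreducible $A(V^{(n)})/I^{(n)}$-modules is at most these values; combined with the lower bounds from Corollary \ref{cor:4.3}, equality holds. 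The final sentence of the statement then follows because each irreducible $V^{(n)}$-module of $\sigma$-type is, by Lemma \ref{lem:4.1}, a module for the simple quotient $V_{\sqrt{2}A_{n-1}}^+$, so the simple quotient acts on all of them.

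The main obstacle I anticipate is verifying that the admissible bipartitions genuinely correspond to \emph{distinct} irreducible $A(V^{(n)})/I^{(n)}$-modules and that no further collapsing occurs beyond the $\pm$ split of Theorem \ref{gp1}; in other words, one must confirm that the surjection from the group algebra of $W_n'$ does not identify two of these characters on $A(V^{(n)})/I^{(n)}$. The $n=4$ base case in Lemma \ref{lem:4.8} already exhibits the explicit matching and serves as a template; I expect the general argument to proceed by checking that the top weights and top-level dimensions recorded in Remark \ref{rem:4.2} distinguish the modules $V_{\sqrt{2}A_{n-1}+2i\Lambda_{n-1}}$, thereby ruling out accidental coincidences and pinning down the correspondence uniquely.
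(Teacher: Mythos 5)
Your proposal has a genuine gap, and it sits exactly where the paper's proof does its real work. Your enumeration of bipartitions allowed by Lemma \ref{lem:4.6} is incomplete: the constraint $r+s\leq 2$ does \emph{not} restrict the first partition to $[n]$ or $[n-1,1]$. Every two-row bipartition $([n-i,i],\varnothing)$ with $2\leq i\leq n/2$ also satisfies $r+s\leq 2$ (here $r=2$, $s=0$), so your claimed list of admissible labels is missing roughly $n/2$ candidates. If these are not eliminated, the upper bound you get is about $n$, not $(n+3)/2$ (resp.\ $n/2+3$), and it does not match the lower bound of Corollary \ref{cor:4.3}; the "two bounds coincide" conclusion collapses. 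This is not a bookkeeping slip: ruling out $([n-i,i],\varnothing)$ for $2\leq i\leq n/2$ is the hard part of the lemma, and nothing in Lemmas \ref{lem:4.4}--\ref{lem:4.6} or Theorem \ref{gp1} does it for free.

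The paper handles precisely this point by induction on $n$. The base case $n=4$ is settled by Lemmas \ref{lem:4.7} and \ref{lem:4.8} (which required first proving that $V^{(4)}$ is simple, via the inequality of Lemma \ref{lem:3.14}), where one checks explicitly that no irreducible $A(V^{(4)})/I^{(4)}$-module is labeled by $([2,2],\varnothing)$. For the inductive step, one uses the branching rule for restriction from $W_n$ to $W_{n-1}$ (equation \eqref{eq:4.9}, from Section 6.1.9 of \cite{GP}): if $([n-i,i],\varnothing)$ with $2\leq i\leq n/2$ occurred for $A(V^{(n)})/I^{(n)}$, then its restriction would contain $\chi_{([n-i,i-1],\varnothing)}$ (or $\chi_{([n-3,2],\varnothing)}$ when $i=2$), i.e.\ a forbidden two-row label would occur one level down, contradicting the inductive hypothesis. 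Your proposal never invokes the restriction $W_n\downarrow W_{n-1}$ at all, so it has no mechanism to exclude these labels. By contrast, the issue you flag as the main obstacle --- possible collapsing of distinct characters under the surjection onto $A(V^{(n)})/I^{(n)}$ --- is harmless: collapsing could only decrease the count, and the lower bound of Corollary \ref{cor:4.3} already prevents that. Your argument for the final sentence of the statement (every $\sigma$-type irreducible factors through the simple quotient, since the count matches the list coming from $V_{\sqrt{2}A_{n-1}}^+$) is fine once the correct count is established.
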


\pf
Since the top level of a $V^{(n)}$-module of $\sigma$-type is a module over $A(V^{(n)})/I^{(n)}$,
it is enough to count the number of inequivalent irreducible $A(V^{(n)})/I^{(n)}$-modules.
We will prove that the possible bipartitions of $n$ which correspond to
the irreducible modules over $A(V^{(n)})/I^{(n)}$ are
\[
  ([n],\varnothing),~~~
  ([n-1,1],\varnothing),~~~
  ([n-i],[i]),~~ 1\leq i\leq \dfr{n-1}{2}
\]
if $n\in 2\Z+1$, and
\[
  ([n],\varnothing),~~~
  ([n-1,1],\varnothing),~~~
  ([n/2],\pm), ~~~
  ([n-i],[i]),~~ 1\leq i\leq \dfr{n}{2}-1
\]
if $n\in 2\Z$.
Then the claim follows from Corollary \ref{cor:4.3} and Lemma \ref{lem:4.4}.

We proceed by induction on $n\geq 4$.
The statement is true for $n=4$ by Lemmas \ref{lem:4.7} and \ref{lem:4.8}.
Suppose that the statement holds for $V^{(m)}$ with $4\leq m\leq n-1$.
Let $(\lam,\mu)=([\lam_1, \cdots, \lam_r], [\mu_1,\cdots,\mu_s])$ with $\abs{\lam}>\abs{\mu}$
be a bipartition of $n$ which corresponds to an irreducible module over $A(V^{(n)})/I^{(n)}$.
%We may assume that $\abs{\lam}\neq \abs{\mu}$.
By Lemma \ref{lem:4.6}, $r+s\leq 2$.
The bipartitions satisfying $r+s\leq 2$ are
\[
  ([n],\varnothing),~~~
%  ([n-1,1],\varnothing),~~~
  ([n-i,i], \varnothing),~~~
  ([n-i],[i]),~~  1\leq i\leq \dfr{n-1}{2}
\]
if $n$ is odd, and
\[
  ([n],\varnothing),~~
  ([n/2,n/2], \varnothing),~~
  ([n/2],\pm),~~
%  ([n-1,1],\varnothing),~~
  ([n-i,i],\varnothing),~~
  ([n-i],[i]),~~1\leq i\leq \dfr{n}{2}-1
\]
if $n$ is even.
By Corollary \ref{cor:4.3}, it suffices to show that bipartitions
$([n-i,i], \varnothing)$ for $2\leq i\leq n/2$ do not correspond to any irreducible
modules over $A(V^{(n)})/I^{(n)}$.
% if $n$ is odd, and so do those $([n-i,i],\varnothing)$, $2\leq i< n/2$ if $n$ is even.

Suppose false and $([n-i,i], \varnothing)$ corresponds to an irreducible module over
$A(V^{(n)})/I^{(n)}$ for some $2\leq i\leq n/2$.
Recall the following branching rule (cf.~Section 6.1.9 of \cite{GP}).
\begin{equation}\label{eq:4.9}
  \mathrm{Res}_{W_{n-1}}^{W_{n}} \chi_{(\lam,\mu)}
  = \sum_{d\in I(\lam)} \chi_{(\lam^{(d)},\mu)} + \sum_{d\in I(\mu)}\chi_{(\lam, \mu^{(d)})},
\end{equation}
where $I(\lam)$ is the set of all $d\in \{1,2,\cdots,r\}$ such that $d=r$,
or $d< r$ and $\lam_d>\lam_{d+1}$, and $\lam^{(d)}$ with $d\in I(\lam)$ denotes
the partition of $|\lam|-1$ which is obtained from $\lam$ by decreasing $\lam_d$ by $1$.
If $i>2$, then $\chi_{([n-i, i-1], \varnothing)}$ will appear in the right hand side of
\eqref{eq:4.9} which contradicts the inductive assumption.
If $i=2$, then $\chi_{([n-3, 2], \varnothing)}$ will appear in the right hand side of \eqref{eq:4.9})
which again contradicts the inductive assumption.
\qed

\medskip

We will prove that $V^{(n)}\cong V_{\sqrt{2}A_{n-1}}^+$ by induction on $n$. 
Let $\Delta_{A_{n-2}}$ be the root subsystem  generated by the simple roots 
$\{\alpha_1,\dots,\alpha_{n-2}\}$ and $V^{(n-1)}$ the subVOA of $V^{(n)}$ generated by
$\{ w^\pm(\alpha) \mid \alpha \in \Delta_{A_{n-2}}^+\}$. 
 Let $U=\com(V^{(n-1)},V^{(n)})$ be the commutant of $V^{(n-1)}$ in $V^{(n)}$.
\begin{comment}
Let $\w$ and $\eta$ be the conformal vectors of $V^{(n)}$ and $V^{(n-1)}$, respectively.
Since $V^{(n)}$ is of OZ-type, $\xi:= \w-\eta$ is the conformal vector of $U$  (cf.~\cite{FZ}) and $\la \xi \ra$ is a simple Virasoro VOA of central charge $1$ by \eqref{eq:4.10}.
\end{comment}
By induction hypothesis, we may assume that $V^{(n-1)}$ is simple and  $V^{(n-1)}\cong V_{\sqrt{2}A_{n-2}}^+$.

Recall the vector $\Lambda_{n-1}\in (\sqrt{2}A_{n-1})^*$ in \eqref{eq:4.1} and set $\gamma =2n\Lambda_{n-1}$. 
Note that $\Delta_{A_{n-2}}$ is orthogonal to $\gamma$ and $\Z\gamma= \sqrt{2}A_{n-1} \cap \Z\Lambda_{n-1}$; thus, 
$\sqrt{2}A_{n-1}$ contains a full rank sublattice $\sqrt{2}A_{n-2} \perp \Z\gamma$ and $\com( V_{\sqrt{2}A_{n-2}}^+, V_{\sqrt{2}A_{n-1}}^+)\cong V_{\Z\gamma}^+$. 

Since $V^{(n)}$ has the simple quotient isomorphic to $V_{\sqrt{2}A_{n-1}}^+$, the commutant $U$ has the simple quotient
isomorphic to $V_{\Z\gamma}^+$.

\subsection{Lattice VOA $V_{\Z\gamma}$ and its subVOA $V_{\Z\gamma}^+$}

Consider the lattice VOA $V_{\Z\gamma}$ and its subVOA $V_{\Z\gamma}^+$, 
where $\gamma =2n\Lambda_{n-1}$.
Note that $(\gamma\mymid \gamma)/2=n(n-1)$ is not a square of positive integers.  

We denote by $M(1)$ the Heisenberg subalgebra of $V_{\Z\gamma}$ generated by
$\gamma=\gamma_{(-1)}\vac$ and by $M(1)^+$ the fixed point subalgebra inside $V_{\Z\gamma}^+$.
Set $\gamma^*=(1/2n(n-1))\gamma$.
Then $(\Z\gamma)^*=\Z \gamma^*$ and the irreducible $V_{\Z\gamma}$-modules are given by
$V_{\Z\gamma+i\gamma^*}$, $0\leq i < 2n(n-1)$.
We denote by $M(1,\alpha)$ the $M(1)$-submodule of $V_{\Z\gamma^*}$ generated by
$e^{\alpha}\in \C[\Z \gamma^*]$ for $\alpha\in \Z \gamma^*$.

Let $V(c,h)$ be the Verma module over the Virasoro algebra with central charge $c$ and
highest weight $h$ and $L(c,h)$ its simple quotient.
Recall from \cite{KR} that if $h=n^2/4$ for some $n\in \Z_{\geq 0}$ then $V(1,h)$ is reducible and we have the following
short exact sequence.
\begin{equation}\label{eq:4.10}
  0\longto V\l( 1,(n+2)^2/4\r)\longto V\l( 1,n^2/4\r)\longto  L\l(1,n^2/4\r) \longto 0.
\end{equation}
Otherwise, $V(1,h)=L(1,h)$.
Let $0\neq \alpha\in \Z\gamma^*$. Then $(\alpha\mymid \alpha)/2$ is not of
the form $n^2/4$ for any integer $n$; thus, $M(1,\alpha)$ is simple and isomorphic to
$V(1,(\alpha\mymid\alpha)/2)=L(1,(\alpha\mymid\alpha)/2)$
as a module over the Virasoro algebra at $c=1$ (cf.~\cite{DG,KR}).
Therefore we have the following decompositions as modules over the Virasoro algebra
 for $1\leq i< 2n(n-1)$ (cf.~Proposition 2.2 of \cite{DG}),
\begin{equation}\label{eq:4.11}
  V_{\Z\gamma+i\gamma^*}
  = \bigoplus_{p\in \Z} M(1,p\gamma+i\gamma^*)
  = \bigoplus_{p\in \Z} L(1,h_{n,p,i}),~~~
  h_{n,p,i}=\dfr{(2n(n-1)p+i)^2}{4n(n-1)},
\end{equation}
whereas $M(1)=M(1,0)$ and $M(1)^\pm$ are irreducible.
We also have the following decompositions (cf.~Theorem 2.7 and Lemma 2.10 of \cite{DG}).
\begin{equation}\label{eq:4.12}
\begin{array}{l}
  \ds
  V_{\Z\gamma}
  = \bigoplus_{p\in \Z} M(1,p\gamma)
  = M(1)\oplus \bigoplus_{p\geq 1} L(1,p^2n(n-1))^{\oplus 2},~~~
  M(1)=\bigoplus_{p\geq 0} L(1,p^2),
  \medskip\\
  \ds
  V_{\Z\gamma}^+
%  = \bigoplus_{p\in \Z} M(1,p\gamma)
  = M(1)^+\oplus \bigoplus_{p\geq 1} L(1,p^2n(n-1)),~~~
  M(1)^+=\bigoplus_{p\geq 0} L(1,4p^2),
  \medskip\\
  \ds
  V_{\Z\gamma}^-
%  = \bigoplus_{p\in \Z} M(1,p\gamma)
  = M(1)^-\oplus \bigoplus_{p\geq 1} L(1,p^2n(n-1)),~~~
  M(1)^-=\bigoplus_{p\geq 0} L(1,(2p+1)^2).
\end{array}
\end{equation}
It is shown in Theorem 2.9 of \cite{DG} that $V_{\Z \gamma}^+$ is generated by its conformal
vector together with the following primary vectors of weight 4 and $n(n-1)$.
\begin{equation}\label{eq:4.13}
  u = 2{\gamma_{(-1)}^*}^2\gamma_{(-1)}^2\vac-2\gamma^*_{(-3)}\gamma
      + 3\gamma_{(-2)}^*\gamma_{(-2)}\vac,~~~~
  v = e^{\gamma}+e^{-\gamma}.
\end{equation}
It is also shown in (loc.~cit.) that the conformal vector and $u$ generates the subVOA
$M(1)^+$ in $V_{\Z\gamma}^+$.

\subsection{$V^{(n)}$ as $V^{(n-1)}\otimes U$-modules}

Denote the conformal vectors of $V^{(n-1)}$ and $U$ by $\eta$ and $\xi$, respectively.
Let 
\begin{equation}\label{eq:4.14}
  \pi: U\longto V_{\Z\gamma}^+
\end{equation}
be the natural surjection and take preimages $\tilde{u}$ and $\tilde{v}$ of $u$ and $v$ defined in
\eqref{eq:4.13} with respect to the map $\pi$.
Since $\tilde{u}$ is of weight four and the weight two subspaces of $V^{(n)}$ and its simple
quotient $V_{\sqrt{2}A_{n-1}^+}$ are isomorphic, we have $\xi_{(3)}\tilde{u}=0$.
Therefore, by replacing $\tilde{u}$ by $\tilde{u}-(1/6)\xi_{(0)}\xi_{(2)}\tilde{u}$ if necessary,
we may assume that $\tilde{u}$ is a primary vector for $\la \xi\ra$ of weight 4.

Now suppose that $V^{(n)}$ is not simple. 
Since $V^{(n)}$ itself is of $\sigma$-type, by Lemma \ref{lem:4.9}, there are only finitely many irreducible modules of $\sigma$-type and $V^{(n)}$ has a composition series
\begin{equation}\label{eq:4.15}
  V^{(n)}=J_0 \supset J_1\supset \cds \supset J_k=0
\end{equation}
with $k>1$ and  the composition factors $J_i/J_{i+1}$, $0\leq i<k$, are isomorphic to
one of the following irreducible $V^{(n)}$-modules of $\sigma$-type.
\begin{equation}\label{eq:4.16}
  V_{\sqrt{2}A_{n-1}}^{\pm},~~
  V_{\sqrt{2}A_{n-1}+2i\Lambda_{n-1}},~~1\leq i< \frac{n}{2},~
  \mbox{ and} ~~ V_{\sqrt{2}A_{n-1}+n\Lambda_{n-1}}^{\pm} ~\mbox{ if $n$ is even}.
\end{equation}
In practice, the top factor $J_{0}/J_{1}=V^{(n)}/J_{1}$ is isomorphic to $V_{\sqrt{2}A_{n-1}}^+$
but the remaining factors $J_{i}/J_{i+1}$ for $i\geq 1$ are not isomorphic to
$V_{\sqrt{2}A_{n-1}}^\pm$ since $V^{(n)}$ is of OZ-type.
Seen as a module over the conformal subalgebra $V^{(n-1)}\tensor U$ of $V^{(n)}$,
each factor is also a module over the simple quotient of $U$ isomorphic to $V_{\Z\gamma}^+$
again by Lemma \ref{lem:4.9}.
Therefore, it is a direct sum of irreducible $V_{\Z\gamma}^+$-modules
among the followings.
\begin{equation}\label{eq:4.17}
  V_{\Z\gamma}^{\pm}, ~~~
  V_{\Z\gamma+\gamma/2}^{\pm},~~~
  V_{\Z\gamma+i\gamma^*},~~ 1\leq i<n(n-1).
\end{equation}
It follows from \eqref{eq:4.11} and \eqref{eq:4.12} that each factor $J_{i}/J_{i+1}$
in \eqref{eq:4.15} is semisimple as a $\la \xi\ra$-module.

\begin{lem}\label{lem:4.10}
  There is no highest weight vector for $\la \eta\ra\tensor \la \xi\ra$ of weights $(0,m^2)$
  with $m\in \Z_{\geq 0}$ in the composition factors $J_{i}/J_{i+1}$ in \eqref{eq:4.15}
  for $1\leq i<k$.
\end{lem}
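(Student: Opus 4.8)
The plan is to restrict each composition factor to the conformal subalgebra $V^{(n-1)}\tensor U$ and to locate precisely where a putative highest weight vector of weight $(0,m^2)$ could sit. By Lemma~\ref{lem:4.9} every factor $J_i/J_{i+1}$ is a module over the simple quotients $V^{(n-1)}\cong V_{\sqrt{2}A_{n-2}}^+$ (by the inductive hypothesis) and $V_{\Z\gamma}^+$, hence a finite direct sum of irreducible $V_{\sqrt{2}A_{n-2}}^+\tensor V_{\Z\gamma}^+$-modules. As $\sqrt{2}A_{n-2}\perp\Z\gamma$ is a full-rank sublattice of $\sqrt{2}A_{n-1}$, this is the lattice coset decomposition
\[
  V_{\sqrt{2}A_{n-1}+\lambda}
  =\bigoplus_{x} V_{\sqrt{2}A_{n-2}+\pi_1(x)}\tensor V_{\Z\gamma+\pi_2(x)},
\]
the sum running over representatives $x$ of $(\sqrt{2}A_{n-1}+\lambda)/(\sqrt{2}A_{n-2}\perp\Z\gamma)$, with $\pi_1,\pi_2$ the orthogonal projections onto the two factors. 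By \eqref{eq:4.16} and the OZ-property, the factors that actually occur for $i\geq 1$ all carry a nonzero coset shift $\lambda$: they are $V_{\sqrt{2}A_{n-1}+2i\Lambda_{n-1}}$ for $1\le i<n/2$ and, for $n$ even, $V_{\sqrt{2}A_{n-1}+n\Lambda_{n-1}}^{\pm}$, while $V_{\sqrt{2}A_{n-1}}^{\pm}$ are discarded because $V^{(n)}$ is of OZ-type.

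I would then pin down the two tensor components of a hypothetical highest weight vector $x$ of weight $(0,m^2)$. Since $\eta$ is the conformal vector of $V_{\sqrt{2}A_{n-2}}^+$ and each coset module $V_{\sqrt{2}A_{n-2}+\mu}$ has minimal $\eta_{(1)}$-eigenvalue $\min_{y\in\sqrt{2}A_{n-2}+\mu}(y\mymid y)/2$, the condition $\eta_{(1)}x=0$ forces $\pi_1(x)\equiv 0$ and $x=\vac\tensor w$ with $\vac$ the unique weight-zero vacuum. For the second component I would invoke the explicit $c=1$ decompositions \eqref{eq:4.11} and \eqref{eq:4.12}: as each factor is already semisimple as a $\la\xi\ra$-module, a $\la\xi\ra$-highest weight vector of perfect-square weight $m^2$ generates a summand $L(1,m^2)$, and such summands occur only inside the Heisenberg pieces $M(1)^{\pm}$ of $V_{\Z\gamma}^{\pm}$ (weights $(2p)^2$ and $(2p+1)^2$). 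Indeed, the lattice summands carry the weights $p^2n(n-1)$ of \eqref{eq:4.12} and $h_{n,p,i}$ of \eqref{eq:4.11}; since $(\gamma\mymid\gamma)/2=n(n-1)$ is not a perfect square, $\sqrt{n(n-1)}$ is irrational and neither family contains a value $m^2$. Hence $w\in M(1)^{\pm}$, i.e.\ $\pi_2(x)\equiv 0$.

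Combining the two constraints, such a vector must lie in a constituent $V_{\sqrt{2}A_{n-2}}^+\tensor M(1)^{\pm}$ indexed by $\pi_1(x)\equiv\pi_2(x)\equiv 0$, and this constituent appears in the displayed decomposition only when $\lambda\equiv 0\pmod{\sqrt{2}A_{n-2}\perp\Z\gamma}$. But $\Lambda_{n-1}\perp\sqrt{2}A_{n-2}$ and $\gamma=2n\Lambda_{n-1}$, so each such $\lambda$ is a non-integral multiple of $\gamma$: either $\lambda=2i\Lambda_{n-1}=(i/n)\gamma$ with $n\nmid i$ (as $1\le i<n/2$), or $\lambda=n\Lambda_{n-1}=\gamma/2$. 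As $\lambda$ is orthogonal to $\sqrt{2}A_{n-2}$, it lies in $\sqrt{2}A_{n-2}\perp\Z\gamma$ only if $\lambda\in\Z\gamma$, which fails in both cases. This contradiction rules out any highest weight vector of weight $(0,m^2)$ in $J_i/J_{i+1}$ for $i\ge 1$.

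I expect the delicate points to be twofold. First is the number-theoretic step that the lattice-shifted sectors in \eqref{eq:4.11}--\eqref{eq:4.12} omit every weight $m^2$, which rests on the irrationality of $\sqrt{n(n-1)}$. Second, and more essential, is the use of the OZ-property to exclude $V_{\sqrt{2}A_{n-1}}^{\pm}$: the lemma is in fact sharp here, since the coset-zero module $V_{\sqrt{2}A_{n-1}}^-$ \emph{does} contain highest weight vectors of weight $(0,m^2)$ (with $m$ odd) arising from $V_{\sqrt{2}A_{n-2}}^+\tensor M(1)^-$, so it is precisely the OZ-property (forcing $V^{(n)}_1=0$) that keeps this module out of the factors with $i\ge 1$.
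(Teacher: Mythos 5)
Your proposal rests on the same two pillars as the paper's proof: the arithmetic fact that the $\xi$-weights appearing in the shifted sectors of \eqref{eq:4.11} and \eqref{eq:4.12} are never perfect squares, and the OZ-property of $V^{(n)}$. The packaging differs. The paper never decomposes the factors over $V_{\sqrt{2}A_{n-2}}^+\tensor V_{\Z\gamma}^+$; it takes the $V_{\Z\gamma}^+$-submodule $M$ of $J_i/J_{i+1}$ generated by the putative vector, concludes from the non-square weights in \eqref{eq:4.17} that $M$ is a direct sum of copies of $V_{\Z\gamma}^{\pm}$, and then --- since all of $M$ has $\eta$-weight $0$, so $\xi$-weights equal total weights there --- reads off a vector of total weight $0$ or $1$ in $J_i/J_{i+1}$, contradicting OZ directly. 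You instead pin the vector into a zero-coset constituent via the lattice decomposition and then appeal to the exclusion of $V_{\sqrt{2}A_{n-1}}^{\pm}$ from the factors with $i\geq 1$, an exclusion the paper justifies by that same OZ argument. Up to the last step your reasoning is correct, and your closing remark about sharpness ($\vac\tensor M(1)^-$ inside $V_{\sqrt{2}A_{n-1}}^-$) is a valid observation.

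The final step, however, contains a genuine error. You assert that the constituent $V_{\sqrt{2}A_{n-2}}^+\tensor M(1)^{\pm}$ appears in the decomposition of $V_{\sqrt{2}A_{n-1}+\lambda}$ \emph{only when} $\lambda\in\sqrt{2}A_{n-2}\oplus\Z\gamma$, and you verify only that the chosen representatives $\lambda=2i\Lambda_{n-1}$ and $\lambda=n\Lambda_{n-1}$ fail this condition. But membership of the representative in the sublattice is a sufficient, not a necessary, condition for the zero coset to occur: the zero-coset constituent appears if and only if $(\sqrt{2}A_{n-1}+\lambda)\cap(\sqrt{2}A_{n-2}\oplus\Z\gamma)\neq\varnothing$, which (since the sublattice sits inside $\sqrt{2}A_{n-1}$) is equivalent to $\lambda\in\sqrt{2}A_{n-1}$. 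For instance $\lambda=\sqrt{2}\alpha_{n-1}$ lies outside $\sqrt{2}A_{n-2}\oplus\Z\gamma$ (a sublattice of index $n-1$ in $\sqrt{2}A_{n-1}$), yet it indexes the module $V_{\sqrt{2}A_{n-1}}$, whose decomposition certainly contains $V_{\sqrt{2}A_{n-2}}\tensor V_{\Z\gamma}$. Your orthogonality argument cannot be used to repair this, because a general element of the coset $2i\Lambda_{n-1}+\sqrt{2}A_{n-1}$ is no longer orthogonal to $\sqrt{2}A_{n-2}$. What you actually need is $2i\Lambda_{n-1}\notin\sqrt{2}A_{n-1}$ for $1\leq i\leq n/2$; this is true, and it is precisely the statement that the modules in \eqref{eq:4.16} with $i\geq 1$ are indexed by nonzero cosets in $(\sqrt{2}A_{n-1})^*/\sqrt{2}A_{n-1}$, as recorded in the proof of Lemma \ref{lem:4.1}. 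With that substitution your argument closes; as written, the step is invalid.
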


\pf
Suppose we have a highest weight vector $x$ for $\la \eta\ra\tensor \la \xi\ra$
with highest weights $(0,m^2)$ in a factor $J_{i}/J_{i+1}$ for $i\geq 1$.
Then $x$ generates a $\la \xi\ra$-submodule isomorphic to $L(1,m^2)$ in $J_{i}/J_{i+1}$.
Consider the $V_{\Z\gamma}^+$-submodule $M$ generated by $x$ in $J_{i}/J_{i+1}$.
Since $h_{n,p,i}$ in \eqref{eq:4.11} is not a square of integers for $1\leq i<2n(n-1)$,
$M$ is a direct sum of irreducible $V_{\Z\gamma}^+$-submodules isomorphic to $V_{\Z\gamma}^\pm$.
In fact, if $m$ is even, then $M$ is a direct sum of $V_{\Z\gamma}^+$-submodules isomorphic
to $V_{\Z\gamma}^+$, and if $m$ is odd, then $M$ is a direct sum of submodules isomorphic to
$V_{\Z\gamma}^-$ by \eqref{eq:4.12}.
Since $\eta$ and $\xi$ are orthogonal, the $\la \xi\ra$-weights and the $\la \w\ra$-weights
are the same on $M$.
Therefore, if $M$ contains a submodule isomorphic to $V_{\Z\gamma}^+$,
then $\dim (V^{(n)}/J_{i+1})_0\geq 2$, and if $M$ contains a submodule isomorphic to
$V_{\Z\gamma}^-$, then $\dim (V^{(n)}/J_{i+1})_1>0$.
Either way, we obtain a contradiction.
\qed

\begin{lem}\label{lem:4.11}
  The $\langle \xi \rangle$-submodule $X$ of $U$ generated by $\tilde{u}$ is simple.
\end{lem}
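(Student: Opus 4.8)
The plan is to realize $X$ as a highest weight module over the Virasoro algebra at central charge $1$ and to show its unique singular vector must vanish by invoking Lemma~\ref{lem:4.10}. Since $\tilde u$ is a primary vector of weight $4$ for $\la\xi\ra$ and $\xi$ has central charge $1$, the module $X$ is a quotient of the Verma module $V(1,4)$. As $4=4^2/4$, the exact sequence \eqref{eq:4.10} shows that $V(1,4)$ has a unique maximal proper submodule, generated by a singular vector $w$ of weight $(4+2)^2/4=9$; hence $X$ is simple precisely when the image of $w$ in $X$ is zero. So I would argue by contradiction, assuming that this image, still denoted $w$, is nonzero, and show it produces a forbidden highest weight vector.

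Next I would locate $w$ inside the composition series \eqref{eq:4.15}. Because $U$ is a subVOA with conformal vector $\xi$, the modes of $\xi$ preserve $U$, so $w\in X\subseteq U$. The surjection $\pi$ of \eqref{eq:4.14} is the restriction to $U$ of the quotient map $V^{(n)}\to V^{(n)}/J_1\cong V_{\sqrt2 A_{n-1}}^+$, so that $\ker\pi=U\cap J_1$. On the other hand $\pi(\tilde u)=u$ is the weight-$4$ primary generator of the summand $L(1,4)$ of $M(1)^+=\bigoplus_{p\geq0}L(1,4p^2)$ appearing in \eqref{eq:4.12}; thus $\pi$ maps $X$ onto the simple Virasoro module $L(1,4)$, and the singular vector is forced to $\pi(w)=0$. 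Therefore $w\in\ker\pi=U\cap J_1\subseteq J_1$, and since $w\neq0$ there is a unique index $1\leq i<k$ with $w\in J_i\setminus J_{i+1}$, so the image $\bar w$ of $w$ in the composition factor $J_i/J_{i+1}$ is nonzero.

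Finally I would read off the highest weight data of $\bar w$. Since $w\in U$ we have $L^\eta_m w=0$ for all $m\geq-1$, so $\bar w$ has $\la\eta\ra$-weight $0$ and is annihilated by $L^\eta_m$ for $m>0$; and since $w$ is a Virasoro singular vector for $\la\xi\ra$ of weight $9=3^2$, the same relations descend to $\bar w$. Hence $\bar w$ is a highest weight vector for $\la\eta\ra\tensor\la\xi\ra$ of weight $(0,3^2)$ lying in a composition factor $J_i/J_{i+1}$ with $i\geq1$, which directly contradicts Lemma~\ref{lem:4.10}. Consequently $w=0$, and $X$ is simple. The step I expect to require the most care is the identification $\ker\pi=U\cap J_1$ together with the verification that $\pi$ sends $X$ onto the simple module $L(1,4)$ (so that $w$ genuinely lands in $J_1$); once $w$ is transferred to a composition factor, the appeal to Lemma~\ref{lem:4.10} is immediate.
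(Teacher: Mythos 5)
Your proof is correct and follows essentially the same route as the paper: pass to the weight-$9$ singular vector furnished by \eqref{eq:4.10}, use the simplicity of the $\la\xi\ra$-module generated by $u=\pi(\tilde u)$ in the simple quotient to force that singular vector into $J_1$, locate it in a composition factor $J_i/J_{i+1}$ with $i\geq 1$, and contradict Lemma~\ref{lem:4.10}. The only difference is that you spell out details the paper leaves implicit (the identification $\ker\pi=U\cap J_1$ and the verification that the singular-vector relations descend to a highest weight vector of weight $(0,3^2)$), which are fine.
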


\pf
Suppose that $X$ is not simple.
Then by \eqref{eq:4.10} $X$ has a singular vector $x$ of weight $9$.
By Lemma \ref{lem:4.9}, there are finitely many irreducible $V^{(n)}$-modules of $\sigma$-type.
Since $u=\pi(\tilde{u})$ generates a simple module over $\la \xi\ra$ in the simple quotient
$V^{(n)}/J_{1}\cong V_{\sqrt{2}A_{n-1}}^+$, the singular vector $x$ is in the maximal ideal $J_{1}$
and there exists $0< i<k$ such that $x\in J_{i}$ and $x \not\in J_{i+1}$.
Then the image of $x$ in $J_{i}/J_{i+1}$ determines a highest weight vector for
$\la \eta\ra\tensor \la \xi\ra$ with the highest weight $(0,3^2)$, contradicting Lemma \ref{lem:4.10}.
Thus $X$ is simple.
\qed

\begin{comment}
It follows from Lemma \ref{lem:4.9} that the simple quotient $V_{\sqrt{2}A_{n-1}}^+$ of $V^{(n)}$
and its subalgebra $V_{\Z\gamma}^+$ act on the composition factors $J_{i}/J_{i+1}$.
Since the weight of $\ol{x}$ with respect to $\xi$ is 9, it follows from the decompositions
\eqref{eq:4.11} and \eqref{eq:4.12} that the $V_{\Z\gamma}^+$-submodule generated by $\ol{x}$
is isomorphic to $V_{\Z\gamma}^-$ with the conformal weight 1.
Then $U$ contains an element of conformal weight 1 with respect to the conformal vector
$\omega$ of $V^{(n)}$ as $\tilde{v} \in U$ is in the commutant of $V^{(n-1)}$ in $V^{(n)}$,
but this contradicts our assumption that $V^{(n)}$ is of OZ-type.
Thus $X$ is simple as a $\la \xi\ra$-module.
\end{comment}

\begin{lem}\label{lem:4.12}
  The subalgebra $Y$ of $U$ generated by $\omega$ and $\tilde{u}$ is simple and
  isomorphic to $M(1)^+$.
\end{lem}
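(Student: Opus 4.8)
The plan is to prove that the natural surjection $\pi$ of \eqref{eq:4.14} restricts to an isomorphism from $Y$ onto $M(1)^+$; since $M(1)^+$ is a simple VOA, this gives both assertions simultaneously. First I would note that, as $\omega=\eta+\xi$ and $a_{(m)}w=0$ for $a\in V^{(n-1)}$, $w\in U$ and $m\geq 0$, the vector $\omega$ acts on $U$ exactly as the conformal vector $\xi$ of $U$ does; thus $Y$ is the $c=1$ subVOA of $U$ generated by $\xi$ and $\tilde u$. Because $M(1)^+$ is generated by its conformal vector together with $u=\pi(\tilde u)$, the map $\pi$ sends $Y$ onto $M(1)^+$, and recalling \eqref{eq:4.12} we have $M(1)^+=\bigoplus_{p\geq 0}L(1,4p^2)$ as a module over $\la\xi\ra\cong L(1,0)$. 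It remains to show that $K:=\ker(\pi|_Y)$ vanishes.

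Since $\pi$ is induced by the quotient $V^{(n)}\to V^{(n)}/J_1\cong V_{\sqrt 2 A_{n-1}}^+$, one has $\ker\pi=U\cap J_1$, so $K=Y\cap J_1\subseteq J_1$. The ideal $K$ of $Y$ is graded, and as $Y$ is of OZ-type ($Y_0=\C\vac$, $Y_1=0$) a nonzero $K$ has its lowest-weight space in some degree $h\geq 2$. Any $0\neq x\in K_h$ is then a highest weight vector for $\la\xi\ra$, because $\xi_{(m)}x\in K$ has degree $<h$ for $m\geq 2$; and since $x\in U$ we also have $\eta_{(m)}x=0$ for $m\geq 0$, so $x$ is a highest weight vector for $\la\eta\ra\tensor\la\xi\ra$ of weight $(0,h)$. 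Choosing $i\geq 1$ with $x\in J_i\setminus J_{i+1}$, the image of $x$ in $J_i/J_{i+1}$ is such a vector, so Lemma \ref{lem:4.10} forces $h$ to be a non-square.

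The contradiction will come from showing that, independently, every $\la\xi\ra$-highest weight vector of $Y$ has perfect-square weight. This is clear for the visible part: lifting each highest weight vector $w_p\in M(1)^+$ of weight $4p^2$ to a $\la\xi\ra$-primary $\tilde w_p\in Y$, the submodule $\la\xi\ra\tilde w_p$ surjects onto $L(1,4p^2)$, and were it not simple its singular vector would occur in degree $(2p+1)^2$ and lie in $K$, which is impossible by Lemma \ref{lem:4.10}; hence $\la\xi\ra\tilde w_p\cong L(1,4p^2)$. For the remaining highest weight vectors I would argue from the internal $\la\xi\ra$-module structure of $Y$: by Lemma \ref{lem:4.11} we have $\la\xi\ra\tilde u\cong L(1,4)=L(1,2^2)$, and $Y$ is generated over $\la\xi\ra\cong L(1,0)$ by $\tilde u$, so every $\la\xi\ra$-constituent of $Y$ occurs in an iterated product of copies of $L(1,2^2)$. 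The $c=1$ fusion rule $L(1,j^2)\boxtimes L(1,k^2)=\bigoplus_{l=\abs{j-k}}^{\,j+k}L(1,l^2)$ then forces every such constituent to be some $L(1,l^2)$, so every highest weight of $Y$ is a perfect square. Together with the preceding paragraph this shows $K$ can have no lowest-weight vector; hence $K=0$ and $\pi|_Y\colon Y\to M(1)^+$ is an isomorphism, so $Y$ is simple and $Y\cong M(1)^+$.

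The main obstacle is this last step: controlling $Y$ as a module over $\la\xi\ra\cong L(1,0)$. Because the simple $c=1$ Virasoro VOA is not rational, one cannot appeal to semisimplicity, and the perfect-square constraint on the highest weights must be obtained either from the $c=1$ fusion rules above or, equivalently, by pulling the explicit decompositions \eqref{eq:4.11}--\eqref{eq:4.12} back through $\pi$ and checking directly that the products $\tilde u_{(k)}\tilde u$ create no primary of non-square weight. By contrast, the surjectivity of $\pi|_Y$ and the simplicity of each $\la\xi\ra\tilde w_p$ are immediate from Lemmas \ref{lem:4.10} and \ref{lem:4.11}.
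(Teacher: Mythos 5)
Your overall strategy --- show that $\pi|_Y\colon Y\to M(1)^+$ is injective, hence an isomorphism onto a simple VOA --- is the same goal the paper pursues, and your easy steps are fine: surjectivity of $\pi|_Y$, the identification $\la\xi\ra\tilde u\cong L(1,4)$ from Lemma \ref{lem:4.11}, and the observation via Lemma \ref{lem:4.10} that a nonzero lowest-weight vector of $K=Y\cap \ker\,\pi$ would have non-square weight. But the step you yourself call ``the main obstacle'' is exactly where your argument has a genuine gap, and you do not resolve it: you assert that, since $Y$ is generated over $\la\xi\ra$ by $\tilde u$, every $\la\xi\ra$-constituent of $Y$ occurs in an iterated product of copies of $L(1,2^2)$, and that the $c=1$ fusion rule then forces every constituent to be some $L(1,l^2)$. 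Fusion rules do not constrain arbitrary submodules or subquotients of a product $A\cd B$ formed inside a VOA; they count intertwining operators, hence they constrain only the simple \emph{quotients} of $A\cd B$ (targets obtained by composing the product intertwining operator with a quotient map). Because $L(1,0)$ at $c=1$ is not rational, $A\cd B$ need not be semisimple, so the simple submodule $L(1,h)$ generated by a lowest-weight vector of $K$ (simple because $h$ is a non-square, by the Verma module structure) is not automatically a quotient, and no contradiction with \eqref{eq:4.19} follows from fusion rules alone. This is precisely the difficulty the paper's proof is designed to overcome.

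The paper's device is worth contrasting with your sketch. It inducts along the filtration $P^0=\la\xi\ra$, $P^1=\la\xi\ra\oplus X$, $P^{i+1}=P^1\cd P^i$, shows that $M=P^{i+1}\cap \ker\,\pi$ is contained in $X\cd Q$ where $Q\cong L(1,(2i)^2)$ is the top constituent of $P^i$, and then --- the key point --- passes to a composition factor of $V^{(n)}$ from \eqref{eq:4.15}: by Lemma \ref{lem:4.9} together with \eqref{eq:4.11} and \eqref{eq:4.12}, those factors are semisimple $\la\xi\ra$-modules, so the image of $M$ there splits off as a direct summand, is therefore a genuine quotient of the image of $X\cd Q$, and this yields a nonzero intertwining operator of type $L(1,2^2)\times L(1,(2i)^2)\to L(1,h)$ contradicting \eqref{eq:4.19} (with a separate multiplicity-one argument disposing of the case $h=(2i+2)^2$, which you also do not address). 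Your proposal contains no counterpart to this semisimplification step, so the perfect-square constraint on the $\la\xi\ra$-highest weights of $Y$ --- the heart of the lemma --- remains unproved. A lesser issue: your lifting of each highest weight vector $w_p\in M(1)^+$ to a $\la\xi\ra$-primary vector $\tilde w_p\in Y$ is not automatic either; the paper can arrange this only for $\tilde u$ itself, using the fact that $V^{(n)}$ and its simple quotient agree in weights $\leq 2$.
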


\pf
Let $X\cong L(1,4)$ be the $\la \xi\ra$-submodule generated by $\tilde{u}$ as in Lemma
\ref{lem:4.11}.
Set $P^0=\la \xi\ra$, $P^1=\la \xi\ra\oplus X\cong L(1,0)\oplus L(1,4)$ and
$P^{i+1}:=P^1\cd P^i$ for $i\geq 1$.
We will show that
\begin{equation}\label{eq:4.18}
  P^i\cong \bigoplus_{0\leq j\leq i} L(1,(2j)^2)\cong P^{i-1}\oplus L(1,(2i)^2)
\end{equation}
as a $\la \xi\ra$-module for $i\geq 1$.
Clearly, this is true for $i=1$.
Suppose $P^i$ has the decomposition as in \eqref{eq:4.18} for $i\geq 1$.
Consider the image $\pi(P^{i+1})=\pi(P^i)+\pi(X)\cd \pi(P^i)$ in the simple quotient
$V_{\Z\gamma}^+$ where $\pi$ is the surjection in \eqref{eq:4.14}.
Since $V_{\Z\gamma}^+$ is a semisimple $\la \xi\ra$-module and has a decomposition as in
\eqref{eq:4.12}, it follows from the fusion rules
\begin{equation}\label{eq:4.19}
\begin{split}
  L(1,2^2)\times L(1,j^2)
  =& ~L(1,(j-2)^2) + L(1,(j-1)^2) +L(1,j^2)
  \medskip\\
  &~~~ + L(1,(j+1)^2)+L(1,(j+2)^2)
\end{split}
\end{equation}
of $L(1,0)$-modules for $j\geq 2$
(cf.~Theorem 4.7 of \cite{DJ} and Theorem 3.4 of \cite{Ml}) that
\begin{equation}\label{eq:4.20}
  \pi(P^{i+1})
  \cong \bigoplus_{0\leq j\leq i+1} L(1,(2j)^2)
  \cong \pi(P^i)\oplus L(1,(2i+2)^2)
\end{equation}
as $\la \xi\ra$-modules.
Let $u^{i+1}$ be a highest weight vector of $L(1,(2i+2)^2)$ in \eqref{eq:4.20} and
$\tilde{u}^{i+1}$ its preimage in $P^{i+1}$.
%We will show that the $\la \xi\ra$-submodule generated by $\tilde{u}^{i+1}$ is simple.
Let $M=P^{i+1}\cap \ker\, \pi$.
Since $\ker\,\pi \subset J_{1}$,  we have $M\subset J_{1}$, where $J_{1}$ is the
maximal ideal of $V^{(n)}$ in \eqref{eq:4.15}.
By definition, we have $P^i\subset P^{i+1}$, and by inductive assumption, we have
$P^i\cap M=0$.
Let $Q$ be the irreducible $\la \xi\ra$-submodule of $P^i$ isomorphic to $L(1,(2i)^2)$,
i.e., $P^i=P^{i-1}\oplus Q$.
It follows from the fusion rule \eqref{eq:4.19} and the inductive assumption that
$\tilde{u}^{i+1}\in X\cd Q$ and $M\subset X\cd Q$.
Suppose $M\ne 0$ and let $h\in \Z_{>1}$ be the top weight of $M$, i.e.,
$M=\bigoplus_{n\geq 0} M_{n+h}$ with $M_h\ne 0$.
If $h\geq (2i+2)^2$, then $\tilde{u}^{i+1}$ is a highest weight vector for $\la \xi\ra$
and it follows from the structure \eqref{eq:4.10} of Verma modules and Lemma \ref{lem:4.10}
that $\tilde{u}^{i+1}$ generates a simple $\la \xi\ra$-submodule isomorphic to $L(1,(2i+2)^2)$
in $P^{i+1}$ and hence $M$ splits in $P^{i+1}$.
\[
  P^{i+1} = P^i \oplus L(1,(2i+2)^2)\oplus M.
\]
Let $J_{t}$ be an ideal in \eqref{eq:4.15} such that $M_h\not\subset J_{t}$ and
$M/J_{t}$ is semisimple as a $\la \xi\ra$-module.
Then $L(1,h) \subset M/J_{t}$.
Since $M\subset X\cd Q$, we obtain a non-zero $L(1,0)$-intertwining operator of type
\[
  L(1,2^2)\times L(1,(2i)^2)\to L(1,h)
\]
in $M/J_{t}\subset (X\cd Q)/J_{t}$ with $h\geq (2i+2)^2$.
Then by the fusion rule \eqref{eq:4.19} we must have $h=(2i+2)^2$ but the fusion coefficient
of this type is one and we already have $\tilde{u}^{i+1}$ in $X\cd Q$, a contradiction.
Therefore, $h<(2i+2)^2$.
Each non-zero vector of $M_h$ is a highest weight vector for $\la \xi\ra$.
By Lemma \ref{lem:4.10}, $h$ is not a perfect square, and the $\la \xi\ra$-submodule
generated by $M_h$ is a direct sum of irreducible submodules isomorphic to $L(1,h)$ by \cite{KR}.
%Let $Q$ be the irreducible $\la \xi\ra$-submodule of $P^i$ isomorphic to $L(1,(2i)^2)$.
Since $M\subset X\cd Q$, we obtain a non-zero $L(1,0)$-intertwining operator of type
\[
  L(1,2^2)\times L(1,(2i)^2)\to L(1,h)
\]
for a non-square $h\in \Z_{>1}$, which contradicts the fusion rule \eqref{eq:4.19}.
Therefore, $M=0$
%and $\tilde{u}^{i+1}$ generates a $\la \xi\ra$-submodule isomorphic to $L(1,(2i+2)^2)$,
and hence $P^{i+1}$ has the desired decomposition.
Since $Y=\cup_{i\geq 0}P^i$, we conclude that $Y\cong M(1)^+$ by \eqref{eq:4.12}.
\qed

\begin{lem}\label{lem:4.13}
  The subVOA $U=\com(V^{(n-1)},V^{(n)})$ contains a
  conformal subalgebra isomorphic to $V_{\Z\gamma}^+$.
  In particular, $V^{(n)}$ is a conformal extension of $V_{\sqrt{2}A_{n-2}}^+\tensor V_{\Z\gamma}^+$.
\end{lem}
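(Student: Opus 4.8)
The plan is to exhibit the required conformal copy of $V_{\Z\gamma}^+$ as the subalgebra $U'=\la\xi,\tilde u,\tilde v\ra$ of $U$ generated by the conformal vector $\xi=\w-\eta$ together with the lifts $\tilde u$, $\tilde v$ of the generators $u$, $v$ of $V_{\Z\gamma}^+$ from \eqref{eq:4.13}, and to show that the surjection $\pi$ of \eqref{eq:4.14} restricts to an isomorphism $\pi|_{U'}\colon U'\to V_{\Z\gamma}^+$. Since $\xi$ has central charge $1$, matching that of $V_{\Z\gamma}^+$, this yields a conformal embedding. By Lemma \ref{lem:4.12} the subalgebra $\la\xi,\tilde u\ra$ is already isomorphic to $M(1)^+$, so $\pi$ is injective there and only the lattice (charged) part $\bigoplus_{p\geq 1}L(1,p^2n(n-1))$ of $V_{\Z\gamma}^+$ remains to be controlled.

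First I would observe that, because $(\gamma\mymid\gamma)/2=n(n-1)$ is not a perfect square, none of the numbers $p^2n(n-1)$ is of the form $m^2/4$; hence by \eqref{eq:4.10} each Verma module $V(1,p^2n(n-1))$ is already simple. In particular the $\la\xi\ra$-submodule $Z=\la\xi\ra\tilde v$ is simple and isomorphic to $L(1,n(n-1))$, so here the analogue of Lemma \ref{lem:4.11} is automatic and no singular-vector analysis is needed.

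Next, imitating the inductive construction in Lemma \ref{lem:4.12}, I would build the charged tower from iterated products of $\tilde v$. Setting $Z^{[1]}=Z$ and letting $Z^{[p]}$ be the $\la\xi\ra$-submodule generated by the weight-$p^2n(n-1)$ component of $Z^{[p-1]}\cd Z$, I would prove by induction that $\pi$ maps $\la\xi,\tilde u\ra+\sum_{q=1}^{p}Z^{[q]}$ isomorphically onto $M(1)^+\oplus\bigoplus_{q=1}^{p}L(1,q^2n(n-1))$. The induction step rests on the $c=1$ fusion rules (the lattice analogue of \eqref{eq:4.19}, from the references cited there): modulo the pieces already constructed, $Z^{[p-1]}\cd Z$ realizes only the constituents $L(1,p^2n(n-1))$ and $L(1,(p-2)^2n(n-1))$, each with multiplicity one. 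To kill a spurious contribution in $\ker\pi$, set $K=\ker(\pi|_{U'})\subseteq J_1$ and note that every vector of $U'$ has $\la\eta\ra$-weight $0$. If $K\neq 0$, choose a $\la\xi\ra$-highest weight vector $x\in K$ of weight $h$ and the largest $i\geq 1$ with $x\in J_i$; its image in $J_i/J_{i+1}$ is a highest weight vector for $\la\eta\ra\tensor\la\xi\ra$ of weight $(0,h)$. If $h$ is a perfect square this contradicts Lemma \ref{lem:4.10}. If $h=q^2n(n-1)$, then the weight computation used in the proof of Lemma \ref{lem:4.10} (cf.~\eqref{eq:4.11} and \eqref{eq:4.12}) shows that the only irreducible $V_{\Z\gamma}^+$-constituents carrying a Virasoro-highest weight vector of weight $q^2n(n-1)$ are $V_{\Z\gamma}^{\pm}$, which contain a vector of weight $0$ or $1$; this is impossible since $J_i\subseteq J_1$ forces $(J_i/J_{i+1})_0=(J_i/J_{i+1})_1=0$. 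As the fusion rules confine the weights occurring in the tower to these two types, we get $K=0$ and $\pi|_{U'}$ is an isomorphism.

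Finally, with $U'\cong V_{\Z\gamma}^+$ a conformal subalgebra of $U$ commuting with $V^{(n-1)}\cong V_{\sqrt{2}A_{n-2}}^+$ inside $V^{(n)}$ and $\eta+\xi=\w$, the tensor product $V^{(n-1)}\tensor U'\cong V_{\sqrt{2}A_{n-2}}^+\tensor V_{\Z\gamma}^+$ embeds as a conformal subalgebra of $V^{(n)}$, which is the asserted conformal extension. The hard part will be the kernel analysis in the charged sectors: the weights $p^2n(n-1)$ are not perfect squares, so Lemma \ref{lem:4.10} does not apply verbatim, and one must combine the $c=1$ fusion multiplicities with the refinement above—using the irreducibility of $V_{\Z\gamma}^{\pm}$ as $V_{\Z\gamma}^+$-modules together with the OZ-type vanishing $(J_1)_0=(J_1)_1=0$—to ensure that no extra constituent slips into $\ker\pi$.
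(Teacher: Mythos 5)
Your strategy is recognizably parallel to the paper's in outline (lift the generators of $V_{\Z\gamma}^+$ into $U$, control products by $c=1$ fusion rules, kill the kernel by an OZ-type weight argument), and one ingredient of your proposal is correct and genuinely useful: the extension of Lemma \ref{lem:4.10} from square weights $m^2$ to the weights $q^2n(n-1)$, using \eqref{eq:4.11}, \eqref{eq:4.12} and the OZ-type vanishing $(J_1)_0=(J_1)_1=0$. However, there is a genuine gap at the foundation of your construction. You claim that, since $n(n-1)$ is not of the form $m^2/4$, the submodule $Z=\la\xi\ra\tilde{v}$ is \emph{automatically} simple and isomorphic to $L(1,n(n-1))$, ``no singular-vector analysis needed.'' The non-square condition only gives simplicity of the \emph{Verma module} $V(1,n(n-1))$; it says nothing unless $\tilde{v}$ is actually a primary (singular) vector for $\la\xi\ra$. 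An arbitrary preimage of $v$ satisfies only $\xi_{(k+1)}\tilde{v}\in\ker\,\pi$ for $k>0$, and these obstructions sit in weights $n(n-1)-k\geq 3$, where $\ker\,\pi$ has no reason to vanish (at this stage $V^{(n)}$ is possibly non-simple, which is the whole point). For $\tilde{u}$ the paper can force primarity by an explicit correction only because the obstructions land in weights $\leq 3$, which vanish by OZ-type and the nondegeneracy of the Griess algebra; nothing of the sort is available in weight $n(n-1)$. Producing primary lifts of $v$ and of all the charged vectors in your tower amounts to splitting the $\la\xi\ra$-module extensions of the $L(1,p^2n(n-1))$ by $\ker\,\pi$, and that splitting is precisely the content of the first half of the paper's proof, which your proposal omits: the paper shows that $\tilde{T}=\pi^{-1}(T)$ is a direct sum of highest weight $\la\xi\ra$-modules by a restricted-dual argument (dualizing $0\to A\to\tilde{T}\to\tilde{T}/A\to 0$ and using Lemma \ref{lem:4.10} to ensure all top weights in play are non-squares, adapted from Proposition 5.11 of \cite{DLTYY}), and only then obtains the decomposition $\tilde{T}=T'\oplus\ker\,\pi$.

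Without that splitting your argument is circular: the tower $Z^{[p]}$ and the fusion-rule ``confinement'' of weights both presuppose that the objects being multiplied are highest weight modules, which is exactly what is in question; and your kernel analysis cannot close the loop by itself, because it only excludes singular vectors of weights $m^2$ and $q^2n(n-1)$ from $K=\ker(\pi|_{U'})$, while a priori $K$ could contain singular vectors of the remaining weights $h_{n,p,j}$ of \eqref{eq:4.11} with $j\not\equiv 0 \pmod{2n(n-1)}$, coming from the other modules in the list \eqref{eq:4.17}. Once the semisimplicity of $\tilde{T}$ is established, your route (Virasoro fusion applied directly to the charged tower) and the paper's route (identify $T'[p]\cong M(1,p\gamma)$ as modules over $Y\cong M(1)^+$ via the multiplicity-one rule \eqref{eq:4.24}, then close up the algebra with the $M(1)^+$ fusion rules \eqref{eq:4.25}) become essentially interchangeable; but the splitting step is the real mathematical content of Lemma \ref{lem:4.13}, not a technicality to be waved away.
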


\pf
Let $Y$ be the subalgebra of $U$ isomorphic to $M(1)^+$ as in Lemma \ref{lem:4.12}.
Recall the surjection $\pi$ in \eqref{eq:4.14}.
Then $\pi(Y)=M(1)^+$.
The simple quotient of $U$ is isomorphic to $V_{\Z \gamma}^+$ which is semisimple as
a $\la \xi\ra$-module as in \eqref{eq:4.12}.
Denote the complement of $M(1)^+$ in $V_{\Z\gamma}^+$ by $T$ so that we have
a decomposition $V_{\Z\gamma}^+=M(1)^+\oplus T$ as a $\la \xi\ra$-module.
Set $\tilde{T}=\pi^{-1}(T)$.
Then $U=Y\oplus \tilde{T}$ as a $\la \xi\ra$-module.
First we will show that $\tilde{T}$ is a semisimple $\la \xi\ra$-module.
The following argument is a slight modification of the proof of Proposition 5.11 of \cite{DLTYY}.
There is a filtration
\begin{equation}\label{eq:4.21}
  \tilde{T}=\tilde{T}_0\supset \tilde{T}_1\supset \cds \supset \tilde{T}_k=0
\end{equation}
such that $\tilde{T}_1=\ker\,\pi$ is the maximal ideal,
$\tilde{T}_i$ are ideals of $U$ and factors $\tilde{T}_i/\tilde{T}_{i+1}$ are semisimple as
$\la \xi\ra$-modules by \eqref{eq:4.15}.
Moreover, each irreducible $\la \xi\ra$-submodule of $\tilde{T}_i/\tilde{T}_{i+1}$ is
a highest weight module whose highest weight is a non-square integer by \eqref{eq:4.11},
\eqref{eq:4.12} and Lemma \ref{lem:4.10}.
In particular, any highest weight vector in $\tilde{T}_i/\tilde{T}_{i+1}$ generates a
simple $\la \xi\ra$-submodule by the structures of Verma modules at $c=1$ (cf.~\cite{KR}).
Let $h$ be the top weight of $\tilde{T}$ and $(\tilde{T})_h$ the top level of $\tilde{T}$.
Let $A$ be the $\la \xi\ra$-submodule generated by $(\tilde{T})_h$.
Then $A$ is a direct sum of irreducible highest weight modules isomorphic to $L(1,h)$.
Consider the following exact sequence of $\la \xi\ra$-modules.
\begin{equation}\label{eq:4.22}
  0\longto A \longto \tilde{T} \longto \tilde{T}/A\longto 0.
\end{equation}
Since $\tilde{T}$ has a $\N$-grading with finite dimensional homogeneous components,
we can take its restricted dual.
So taking the dual modules, i.e., restricted duals\footnote{%
Here $M^*$ stands for the restricted dual of $\N$-gradable module $M$.},
we obtain the following exact sequence of $\la \xi\ra$-modules.
\begin{equation}\label{eq:4.23}
  0 \longto (\tilde{T}/A)^* \longto {\tilde{T}\, }^* \longto A^* \longto 0.
\end{equation}
Since the top weight $h$ of ${\tilde{T}\,}^*$ is not a perfect square,
its top level generates a semisimple $\la \xi\ra$-submodule isomorphic to $A^*\cong A$
as $L(1,h)^*\cong L(1,h)$.
Therefore, the extension \eqref{eq:4.23} splits and so does the extension \eqref{eq:4.22}.
We can apply the same argument to the complement of $A$ in $\tilde{T}$ which is isomorphic to
$\tilde{T}/A$, and since the top weight of $\tilde{T}/A$ is strictly larger than $h$,
we see by induction that $\tilde{T}$ is a direct sum of highest weight $\la \xi\ra$-submodules.

Thus, we can decompose $\tilde{T}=T'\oplus \ker\,\pi$ such that $\pi(T')=T$ as
$\la \xi\ra$-modules.
By \eqref{eq:4.12}, we have the following decomposition as a $\la \xi\ra$-module.
\[
  T'=\bigoplus_{p\geq 1} T'[p],~~~T'[p]\cong L(1,p^2n(n-1)).
\]
We prove that each $T'[p]$ is an irreducible $Y$-module.
Recall some  fusion rules of $L(1,0)$-modules from \cite{DJ}.
Let $m$, $p$, $q$ be positive integers such that $p$ is not a square.
Then it is shown in Theorem 4.7 of \cite{DJ} that
\begin{equation}\label{eq:4.24}
  \dim \binom{L(1,q)}{L(1,m^2)~~~L(1,p)}_{L(1,0)}=\delta_{p,q}.
\end{equation}
Since $V^{(n)}$ is of OZ-type, there is no irreducible $V_{\Z\gamma}^+$-submodule isomorphic to
$V_{\Z\gamma}^\pm$ in the composition factors of $\ker\,\pi$.
Then it follows from the decompositions in \eqref{eq:4.11} and \eqref{eq:4.12} that
there is no overlapping between irreducible $\la \xi\ra$-submodules  of $T'$ and $\ker\,\pi$;
thus, the multiplicity of $L(1,p^2n(n-1))$ in $\tilde{T}$ is just one.
Therefore, the fusion rules \eqref{eq:4.24} guarantee $Y\cd T'[p]\subset T'[p]$,
showing that $T'[p]$ is a $Y$-modules.
Indeed, $T'[p]$ is irreducible and isomorphic to $M(1,p\gamma)$ for $p\in \Z_{>0}$ (cf.~\cite{DN}).
It also follows from the fusion rules \eqref{eq:4.24} that $\ker\,\pi$ is a $Y$-submodule.

Finally, we prove that $Y\oplus T'$ forms a subVOA of $U$ isomorphic to $V_{\Z\gamma}^+$.
Seen as a $Y\cong M(1)^+$-module, we have the following decomposition.
\[
  Y\oplus T'=M(1)^+\oplus \bigoplus_{p\in \Z_{>0}}M(1,p\gamma).
\]
Recall some fusion rules of $M(1)^+$-modules from \cite{A}.
Let $p$ and $q$ be positive integers such that $p\geq q$.
Then $M(1,p\gamma)\cong M(1,-p\gamma)$ and $M(1,q\gamma)\cong M(1,-q\gamma)$ as $M(1)^+$-modules
and we have the following fusion rules (cf.~Theorem 4.5 of \cite{A}\footnote{%
In the case $p=q$, $M(1,0)$ appears which we will understand as $M(1,0)=M(1)^+ + M(1)^-$.}).
\begin{equation}\label{eq:4.25}
  M(1,p\gamma)\times M(1,q\gamma)
  = M(1,(p+q)\gamma)+ M(1,(p-q)\gamma).
\end{equation}
Although $\tilde{T}_1=\ker\,\pi$ may not be a semisimple $Y$-module, we can apply
the fusion rules \eqref{eq:4.25} to deduce that $(T'\cd T')\cap \ker\,\pi=0$
by considering the filtration \eqref{eq:4.21} obtained from \eqref{eq:4.15}.
Each factor $\tilde{T}_i/\tilde{T}_{i+1}$ is a semisimple $Y$-module and there is no
simple subquotient of $\tilde{T}_1=\ker\,\pi$ isomorphic to neither $M(1)^\pm$ nor
$M(1,p\gamma)$ for $p\in \Z_{>0}$.
Therefore, the fusion rules \eqref{eq:4.25} of $Y$-modules guarantee
$(T'\cd T')\cap \ker\,\pi=0$ and hence $Y\oplus T'$ is a subVOA of $U$ isomorphic to
$V_{\Z\gamma}^+$.
\qed

\subsection{Simplicity of $V^{(n)}$}
We are now in a position to state the main result of this section.

\begin{thm}\label{thm:4.14}
  The vertex operator algebra $V^{(n)}$ is simple and isomorphic to $V_{\sqrt{2}A_{n-1}}^{+}$.
\end{thm}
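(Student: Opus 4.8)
The plan is to argue by induction on $n\ge 4$, the base case $n=4$ being Lemma~\ref{lem:4.7}; so assume $V^{(n-1)}\cong V_{\sqrt{2}A_{n-2}}^+$. It is enough to show that the canonical surjection $q\colon V^{(n)}\longto V_{\sqrt{2}A_{n-1}}^+$ onto the simple quotient is injective, equivalently that the maximal ideal $J_{1}=\ker q$ of \eqref{eq:4.15} is zero. By Lemma~\ref{lem:4.13}, $V^{(n)}$ contains $\mathcal{W}:=V_{\sqrt{2}A_{n-2}}^+\otimes V_{\Z\gamma}^+$ as a full conformal subalgebra; since both tensor factors are simple, rational and $C_2$-cofinite, so is $\mathcal{W}$, and therefore $V^{(n)}$ is a completely reducible $\mathcal{W}$-module. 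Consequently $J_{1}$ is a direct sum of irreducible $\mathcal{W}$-modules $N_{\lambda}\otimes M_{\mu}$, where $N_\lambda$ is an irreducible $V_{\sqrt{2}A_{n-2}}^+$-module of $\sigma$-type (necessarily in the list of Lemma~\ref{lem:4.1}) and $M_\mu$ is an irreducible $V_{\Z\gamma}^+$-module.

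I would then reduce simplicity to the single claim that $U=\com(V^{(n-1)},V^{(n)})\cong V_{\Z\gamma}^+$, i.e.\ that $\ker\pi$ for the map $\pi$ of \eqref{eq:4.14} vanishes. In one direction, $U$ is precisely the multiplicity space of the trivial constituent $N_\lambda=V^{(n-1)}$ in $V^{(n)}$; if $J_1\ne 0$ then its socle contains an irreducible $V^{(n)}$-module, necessarily one of the lattice modules $V_{\sqrt{2}A_{n-1}+2i\Lambda_{n-1}}$, and since $\Lambda_{n-1}\perp\Delta_{A_{n-2}}$ and $2i\Lambda_{n-1}=2i(n-1)\gamma^*$ this module carries the trivial $V^{(n-1)}$-constituent tensored with $V_{\Z\gamma+2i(n-1)\gamma^*}$, whence $\ker\pi\ne 0$. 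Conversely, once $U\cong V_{\Z\gamma}^+$ the commuting pair $V_{\sqrt{2}A_{n-2}}^+$ and $V_{\Z\gamma}^+$ forms a dual pair in $V^{(n)}$ whose multiplicity-one decomposition, indexed by the $\sigma$-type modules of $V_{\sqrt{2}A_{n-2}}^+$, must coincide with that of $V_{\sqrt{2}A_{n-1}}^+$, forcing $J_1=0$.

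To prove $U\cong V_{\Z\gamma}^+$ I would decompose $U$ over the conformal subalgebra $V_{\Z\gamma}^+\subseteq U$ of Lemma~\ref{lem:4.13}. Because $V^{(n)}$ is of OZ-type and $\eta$, $\xi$ are orthogonal, the $\xi$-weights coincide with the $\w$-weights on $U$, so $U$ is itself of OZ-type with respect to $\xi$; thus $U_0=\C\vac$ and $U_1=0$, which leave the trivial module with multiplicity one and exclude $V_{\Z\gamma}^-$, while Lemma~\ref{lem:4.10} forbids any $\la\xi\ra$-highest weight vector of square weight in $\ker\pi$ and so excludes every summand isomorphic to $V_{\Z\gamma}^{\pm}$. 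The hard part will be to exclude the remaining candidates, the constituents of $\ker\pi$ isomorphic to $V_{\Z\gamma+\gamma/2}^{\pm}$ or $V_{\Z\gamma+i\gamma^*}$, whose $\la\xi\ra$-weights $h_{n,p,i}$ are non-square but whose top weights $i^2(n-1)/n$ can be integral for composite $n$ (already at $n=4$, $i=2$, where the naive weight count fails and Lemma~\ref{lem:4.7} had to invoke Lemma~\ref{lem:3.14}). To rule these out I would push the filtration-and-fusion argument of Lemmas~\ref{lem:4.11}--\ref{lem:4.13} one step further via the $c=1$ fusion rules \eqref{eq:4.24} and \eqref{eq:4.25}: since by Lemma~\ref{lem:6.1} the Griess algebra of $V^{(n)}$ is the non-degenerate quotient $B(F_n{:}\mathfrak{S}_n)$, the map $q$ is injective on $V^{(n)}_2$ and on the generating Ising vectors, so any constituent $V_{\Z\gamma+2i(n-1)\gamma^*}$ of $\ker\pi$ would have to be produced from these vectors inside the subalgebra they generate, which the fusion rules preclude. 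This yields $\ker\pi=0$, hence $U\cong V_{\Z\gamma}^+$ and $J_1=0$, so that $V^{(n)}$ is simple and isomorphic to $V_{\sqrt{2}A_{n-1}}^{+}$, completing the induction.
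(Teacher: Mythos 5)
Your reduction is sound as far as it goes: the equivalence $J_1=0\Longleftrightarrow \ker\pi=0$ is correct (note that your ``conversely'' paragraph is just the contrapositive of your first direction, since any irreducible $V^{(n)}$-submodule of $J_1$, being one of the modules in \eqref{eq:4.16} other than $V_{\sqrt{2}A_{n-1}}^\pm$, contributes a vacuum-type $V^{(n-1)}$-constituent and hence a nonzero piece of $\ker\pi$), and your exclusion of summands of type $V_{\Z\gamma}^{\pm}$ from $\ker\pi$ via the OZ-property and Lemma \ref{lem:4.10} is exactly what the paper also does. The genuine gap is the last step, which is the entire difficulty of the theorem: your argument for excluding the remaining candidates $V_{\Z\gamma+\gamma/2}^{\pm}$ and $V_{\Z\gamma+i\gamma^*}$ from $\ker\pi$ is not a proof. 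The fusion rules \eqref{eq:4.24} and \eqref{eq:4.25} only constrain the \emph{internal} structure of $U$ as a $\la\xi\ra$- resp.\ $M(1)^+$-module (this is how they are used in Lemmas \ref{lem:4.11}--\ref{lem:4.13}); nothing internal to $U$ as a $V_{\Z\gamma}^+$-module can forbid extra summands, and indeed the conclusion of Lemma \ref{lem:4.13} is perfectly consistent with $U\cong V_{\Z\gamma}^+\oplus\ker\pi$, $\ker\pi\neq 0$. Moreover, the phrase ``would have to be produced from these vectors inside the subalgebra they generate'' is vacuous: by Condition \ref{cond:1} the Ising vectors generate all of $V^{(n)}$, so this is true of every vector of $V^{(n)}$ and precludes nothing. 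Injectivity of $q$ on $V^{(n)}_2$ only forces the top weight of $J_1$ to be at least $3$; it says nothing about higher-weight constituents of $\ker\pi$.

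What is actually needed --- and what the paper supplies --- is a mechanism coupling candidate submodules of $J_1$ to the action of the Ising vectors \emph{outside} $W$, i.e.\ control of products of the complement with $J_1$. The paper does this not over $\mathcal{W}=V_{\sqrt{2}A_{n-2}}^+\otimes V_{\Z\gamma}^+$ but over its simple current extension $V_\Gamma^+$, $\Gamma=\sqrt{2}A_{n-2}\oplus\Z\gamma$, whose presence inside $V^{(n)}$ is itself a nontrivial step (the constituent $V_{\sqrt{2}A_{n-2}}^-\otimes V_{\Z\gamma}^-$ occurs with multiplicity one because $V^{(n)}$ is of OZ-type). Rationality of $V_\Gamma^+$ \cite{DJL} splits $V^{(n)}=Z\oplus J_1$ with $Z\cong V_{\sqrt{2}A_{n-1}}^+$ as $V_\Gamma^+$-modules; then the fusion rules of $V_\Gamma^+$-modules \cite{ADL}, combined with Lemma \ref{lem:4.9} (among the $\sigma$-type irreducibles, only the simple quotient contains $V_\Gamma^+$-constituents from the set $\mathscr{A}$), show that $J_1$ contains no irreducible $V_\Gamma^+$-submodule whose class lies in $\mathscr{A}$ and hence that $Z\cd Z=Z$. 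Thus $Z$ is a subVOA with $Z_2=V^{(n)}_2$, and since $V^{(n)}$ is generated by its Griess algebra, $V^{(n)}=Z$. This use of lattice-orbifold fusion rules for the full-rank lattice $\Gamma$ is not a refinement of the $c=1$ computations you cite but a different (and indispensable) ingredient; without it, or some substitute that sees the action of the whole Griess algebra on $\ker\pi$, your proposal does not close.
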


\pf
We have shown in Lemma \ref{lem:4.13} that $V^{(n)}$ contains a conformal subalgebra isomorphic
to $V_{\sqrt{2}A_{n-2}}^+\tensor V_{\Z\gamma}^+$.
Recall the composition series \eqref{eq:4.15}.
Since the top factor $J_0/J_1$ is isomorphic to $V_{\sqrt{2}A_{n-1}}^+$ and $V^{(n)}$ is
of OZ-type, it follows that $V^{(n)}$ contains an irreducible
$V_{\sqrt{2}A_{n-2}}^+\tensor V_{\Z\gamma}^+$-submodule $V_{\sqrt{2}A_{n-2}}^-\tensor V_{\Z\gamma}^-$
with multiplicity one and thus $V^{(n)}$ contains the simple current extension
\[
  V_{\sqrt{2}A_{n-2}\oplus \Z\gamma}^+
  = V_{\sqrt{2}A_{n-2}}^+\tensor V_{\Z\gamma}^+\oplus V_{\sqrt{2}A_{n-2}}^-\tensor V_{\Z\gamma}^-
\]
of $V_{\sqrt{2}A_{n-2}}^+\tensor V_{\Z\gamma}^+$ as a conformal subalgebra.
Since $\alpha_{n-1}=2n\gamma^*-2\Lambda_{n-2}$ by \eqref{eq:4.1},
one has the following coset decomposition.
%\begin{equation}\label{eq:4.26}
\[
  \sqrt{2}A_{n-1}
  = \bigsqcup_{0\leq i<n-1} (\sqrt{2}A_{n-2}+2i\Lambda_{n-2}) \oplus (\Z\gamma-2ni\gamma^*).
\]
%\end{equation}
For brevity, set $\Gamma :=\sqrt{2}A_{n-2}\oplus \Z\gamma$ and
%\begin{equation}\label{eq:4.27}
\[
  \Gamma(i) :=(\sqrt{2}A_{n-2}+2i\Lambda_{n-2}) \oplus (\Z\gamma-2ni\gamma^*)
  ~~~\mbox{for}~~~ 0\leq i< n-1.
\]
%\end{equation}
Since $V_\Gamma^+$ is rational \cite{DJL}, there is a complement $V_\Gamma^+$-module $Z$
of $J_1$ in $V^{(n)}$ such that
%\begin{equation}\label{eq:4.28}
\[
  V^{(n)}=Z\oplus J_1
\]
%\end{equation}
and $Z\cong V_{\sqrt{2}A_{n-1}}^+$ as a $V_\Gamma^+$-module.
Let $\mathscr{A}$ be the set of inequivalent irreducible $V_\Gamma^+$-submodules of $Z$.
Then by \cite{AD}
%\begin{equation}\label{eq:4.29}
\[
  \mathscr{A}
  = \l\{ V_{\Gamma(0)}^+,~ V_{\Gamma(i)} ~\Big|~ 1\leq i <\dfr{n-1}{2} \r\}
\]
%\end{equation}
if $n$ is even, and
%\begin{equation}\label{eq:4.30}
\[
  \mathscr{A}
  = \l\{ V_{\Gamma(0)}^+,~ V_{\Gamma((n-1)/2)}^+,~ V_{\Gamma(i)} ~\Big|~ 1\leq i <\dfr{n-1}{2} \r\}
\]
%\end{equation}
if $n$ is odd, and the complement $Z$ has the following shape as a
$V_\Gamma^+$-module.
%\begin{equation}\label{eq:4.31}
\[
  Z
  = \bigoplus_{A\in \mathscr{A}} A.
\]
%\end{equation}

%Now let $\mathscr{F}(V_\Gamma^+)$ be the fusion ring, i.e., the Grothendieck ring of
%the tensor category of $V_\Gamma^+$-modules.
%Let $\mathscr{M}$ be the set of elements $[M]$ of $\mathscr{F}(V_\Gamma^+)$ where
%$M$ runs over $V_{\Gamma}^+$-submodules of $V^{(n)}$.
%We consider an action of $\mathscr{A}$ on $\mathscr{M}$ as follows.
%For $[A]\in \mathscr{A}$ and $[M]\in \mathscr{M}$, we pick the unique irreducible
%$V_{\Gamma}^+$-submodule $A$ of the complement $Z$ with given isomorphism class $[A]\in \mathscr{A}$,
%and then define $[A]\cd [M]:=[A\cd M]$.

Since $V^{(n)}$ is of OZ-type, there is no $V_{\Gamma}^+$-submodule isomorphic to
$V_{\Gamma(0)}^-$ in $V^{(n)}$. In the case that $n$ is odd, it follows from the fusion rules
\[
  V_{\Gamma((n-1)/2)}^+ \times V_{\Gamma((n-1)/2)}^- = V_{\Gamma(0)}^-
\]
of $V_{\Gamma}^+$-modules (cf.~\cite{ADL}) that $V^{(n)}$ does not contain an irreducible
$V_{\Gamma}^+$-submodule isomorphic to $V_{\Gamma((n-1)/2)}^-$, either.
Therefore, if there is an irreducible $V_\Gamma^+$-submodule $M$ of $J_1$ whose isomorphism class
belongs to $\mathscr{A}$, then every irreducible $V_\Gamma^+$-submodule of $Z\cd M$ is isomorphic
to a module in $\mathscr{A}$ by the fusion rules of $V_\Gamma^+$-modules (loc.~cit.).
Since the simple quotient $V^{(n)}/J_1\cong V_{\sqrt{2}A_{n-1}}^+$ is the only irreducible
$V^{(n)}$-module of $\sigma$-type which contains an irreducible $V_\Gamma^+$-submodule in $\mathscr{A}$,
we see that there is no irreducible $V_\Gamma^+$-submodule of $J_1$ whose
isomorphism class is in $\mathscr{A}$.
Therefore, we see that $Z\cd Z=Z$ again by the fusion rules of $V_\Gamma^+$-modules, and hence
$Z$ forms a subVOA of $V^{(n)}$.
Since the weight two subspace of $V^{(n)}$ coincides with that of $Z$ and $V^{(n)}$ is generated
by its Griess algebra, we conclude that $V^{(n)}=Z\cong V_{\sqrt{2}A_{n-1}^+}$.
\qed

\section{Simplicity of $V$ for the other cases} \label{sec:4}

%We consider the simplicity of a VOA $V$ satisfying Condition 1 whose simple quotient
%is not isomorphic to either $K(A_n,2)$ or $V_{\sqrt{2}A_n}^+$.
\begin{nota}
Let $V$ be a VOA satisfying Condition \ref{cond:1} and let $\mathcal{E}$ be a finite $\sigma$-closed generating set. Set $I_G=\{ \sigma_e \mid e\in \mathcal{E}\}$ and $G=\la I_G \ra$.
\end{nota}
\begin{comment}
Then $G$ is a finite indecomposable 3-transposition group and
the Griess algebra of $V$ is isomorphic to a homomorphic image of the Matsuo algebra
$B(G)$ associated with $G$.
Since $V$ is of OZ-type, it has the unique maximal ideal, and it follows from (4) of
Theorem \ref{thm:3.7} that the isomorphism class of the simple quotient of $V$ is uniquely
determined by $G$.
\end{comment} 
In this section, we will show that if $G$ is one of the groups listed in Theorem \ref{thm:main},  then the Griess algebra of $V$ is isomorphic to the non-degenerate quotient of $B(G)$, 
and $V$ is simple. Moreover, the structure of $V$ is uniquely determined by the group $G$.
%In this case the full group $G_V$ is also finite and uniquely determined.

By Lemma \ref{lem:6.1}, we will divide the proof into two cases: (A) $G$ is a group listed in Theorem \ref{thm:main} and $B(G)$ is non-degenerate; (B) $G$ is a group listed in Theorem \ref{thm:main}  but $B(G)$ is degenerate. 

%The main idea is to use induction on the size of the group $G$. 
The main strategy is as follows. 
We consider a  non-trivial proper indecomposable 3-transposition subgroup $H$ of $G$
where the set of 3-transpositions of $H$ is given by $I_H=H\cap I_G$.
Let $\mathcal{E}'=\{ e\in \mathcal{E} \mid \sigma_e \in H\}$ and consider the subVOA
$W=\la \mathcal{E}'\ra$ of $V$. By induction, we may assume the structure of $W$, i.e., $(H,W)$ is one of the pairs listed in Theorem \ref{thm:main}. We will argue
that $V$ is indeed simple by using our knowledge  about $W$ and its irreducible modules. The inequality in Lemma \ref{lem:3.14} will be used frequently.

Since the cases $G=\mathfrak{S}_n$ ($n\geq 3$) and $G=F_n{:}\mathfrak{S}_n$ ($n\geq 4$) have been proved by \cite[Theorem 4.1]{JLY} and Theorem \ref{thm:4.14}, we only consider the remaining  cases. 

\paragraph{Case A:} $G=\mathrm{O}_{6}^-(2)$, $\mathrm{Sp}_{6}(2)$, or $\mathrm{O}_{8}^+(2)$. 

\begin{thm}\label{thm:5.2}
	%Let $V$ be a VOA satisfying Condition \ref{cond:1} and take a finite $\sigma$-closed subset $\mathcal{E}$ of $E_V$ such that $V=\la \mathcal{E}\ra$ and $V_2=\C \mathcal{E}$. Set $G=\la \sigma_e\mid e\in \mathcal{E}\ra$.
Let $G = \mathrm{O}_6^-(2)$, $\mathrm{Sp}_6(2)$ or $\mathrm{O}_8^+(2)$.  Then  $V$ is simple.
\end{thm}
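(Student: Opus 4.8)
The plan is to follow the template of Lemma~\ref{lem:4.7}: assume $V$ is not simple and bound the top weight of its maximal ideal below $3$ via Lemma~\ref{lem:3.14}, obtaining a contradiction. Since $G$ is one of the groups in \eqref{eq:6.1}, Lemma~\ref{lem:6.1} shows that $B(G)$ is positive definite, hence non-degenerate; thus the natural surjection $B(G)\to V_2$ is an isomorphism and the invariant form on the Griess algebra $V_2$ is non-degenerate. As $V$ is of OZ-type, its maximal ideal $J$ coincides with the radical of the invariant form, so $J\cap V_n=0$ for $n\le 2$. Therefore, if $V$ were not simple, then $J\ne 0$ would be a $V$-module of $\sigma$-type whose top weight $h$ is an integer with $h\ge 3$. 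It suffices to bound $h$ strictly below $3$.

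First I would record a convenient reformulation of Lemma~\ref{lem:3.14}. Since $(\w|\w)=2\abs{I}/(k+8)$ and the central charge equals $2(\w|\w)=4\abs{I}/(k+8)$, the ratio $(k_H+8)\abs{I_G}/((k_G+8)\abs{I_H})$ is exactly $c_V/c_W$, so the inequality reads
\[
  h\le \lambda\,\frac{c_V}{c_W},
\]
where $c_V,c_W$ are the central charges of $V$ and of the subVOA $W=\la\mathcal{E}'\ra$, and $\lambda$ is a bound for the real top weights of the irreducible $W$-submodules of $J$. To apply this I need $W$ rational, so that $J$ (which is of $\sigma$-type with respect to $\mathcal{E}'\subset\mathcal{E}$) decomposes as a direct sum of irreducible $W$-modules of $\sigma$-type, all with real top weights bounded by a known $\lambda$.

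The key is to pick $H\le G$ so that $W$ is already understood and $\lambda/c_W$ is small, and here the sub-root-systems of type $D$ work cleanly: $D_5\subset E_6$, $D_6\subset E_7$ and $D_8\subset E_8$. The corresponding Ising vectors generate an $I$-subgroup $H\cong F_n{:}\mathfrak{S}_n$ (Table~2), and by Theorem~\ref{thm:4.14} with Remark~\ref{rem:3.9} the subVOA is $W\cong K(D_n,2)\cong V_{\sqrt{2}A_{n-1}}^+$, which is simple and rational. By Lemma~\ref{lem:4.1} and Remark~\ref{rem:4.2} the irreducible $W$-modules of $\sigma$-type are the $V_{\sqrt{2}A_{n-1}+2i\Lambda_{n-1}}$ (with $\pm$-halves at the ends), of top weights $i(n-i)/n$, so $\lambda=\max_i i(n-i)/n$ over admissible $i$. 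This gives $(\lambda,c_W)=(6/5,4),(3/2,5),(2,7)$ while $c_V=36/7,\,63/10,\,15/2$ for $G=\mathrm{O}_6^-(2),\,\mathrm{Sp}_6(2),\,\mathrm{O}_8^+(2)$, respectively, so the bound $\lambda c_V/c_W$ equals $54/35,\,189/100,\,15/7$, all strictly less than $3$. This contradicts $h\ge 3$, forcing $J=0$; hence $V$ is simple. Once simplicity holds, Theorem~\ref{thm:3.7}(4) identifies the Griess algebra with $B(G)$ and determines the VOA structure uniquely.

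The main obstacle is not the final inequality but securing its two hypotheses: that $J|_W$ really is a direct sum of irreducible $\sigma$-type $W$-modules with a uniform real top-weight bound, which rests on the rationality of $V_{\sqrt{2}A_{n-1}}^+$ and on the complete classification of its $\sigma$-type modules in Lemma~\ref{lem:4.1}; and that the chosen $D_n$-subsystem genuinely yields an $I$-subgroup isomorphic to $F_n{:}\mathfrak{S}_n$ realized by $K(D_n,2)$ inside $V$, so that the already-established Theorem~\ref{thm:4.14} applies. Choosing the $D$-type (rather than an $A$-type) subsystem is precisely what keeps $\lambda/c_W$ small enough to clear the threshold $3$ in all three cases.
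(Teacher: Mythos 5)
Your proposal is correct, and it follows the same overall template as the paper (non-degeneracy of $B(G)$ forces the maximal ideal $J$ to have top weight $h\geq 3$; then Lemma~\ref{lem:3.14} applied to a well-chosen subVOA $W$ forces $h<3$), but your choice of $W$ is genuinely different. The paper takes $A$-type data: for $\mathrm{O}_6^-(2)$ and $\mathrm{Sp}_6(2)$ it uses $A_5\subset E_6$ and $A_7\subset E_7$, so $H\cong\mathfrak{S}_6,\mathfrak{S}_8$ and $W\cong K(A_5,2),K(A_7,2)$ (identified via \cite[Theorem 4.1]{JLY}, with $\sigma$-type module data taken from \cite{JL}: top weights up to $3/2$ resp.\ $2$, giving $h\leq 72/35$ resp.\ $9/4$); for $\mathrm{O}_8^+(2)$ it proceeds recursively, taking $E_7\subset E_8$, $H\cong\mathrm{Sp}_6(2)$, $W\cong K(E_7,2)$ from its own Case 2, and crucially using the bound $\lambda=9/4$ for $\sigma$-type $K(E_7,2)$-modules that was itself produced by Lemma~\ref{lem:3.14} in Case 2 (giving $h\leq 75/28$). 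You instead use $D$-type subsystems $D_5\subset E_6$, $D_6\subset E_7$, $D_8\subset E_8$ uniformly, so $H\cong F_n{:}\mathfrak{S}_n$ and $W\cong V_{\sqrt{2}A_{n-1}}^+$ by Theorem~\ref{thm:4.14} and Remark~\ref{rem:3.9}; your numerics (via the correct reformulation $h\leq\lambda\,c_V/c_W$, and $\lambda=6/5,3/2,2$, $c_W=4,5,7$ from Lemma~\ref{lem:4.1} and Remark~\ref{rem:4.2}) give $54/35$, $189/100$, $15/7$, all strictly below $3$, and I have checked they agree with Lemma~\ref{lem:3.14} computed from $(\abs{I_H},k_H)=(20,12),(30,16),(56,24)$ and Table 2. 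What your route buys: it is uniform across the three cases, avoids the external input \cite{JL} on $K(A_m,2)$-modules, yields slightly sharper bounds, and—by invoking rationality of $V_{\sqrt{2}A_{n-1}}^+$ from \cite{DJL}—it makes explicit the hypothesis of Lemma~\ref{lem:3.14} that $J$ decomposes into irreducible $W$-submodules, a point the paper leaves implicit for $K(A_m,2)$ and $K(E_7,2)$. What it costs: everything now rests on Theorem~\ref{thm:4.14} (the hardest result of the paper) even in cases where the lighter $A$-type input would do, and, like the paper, you still assert without proof that the $D_n$-root transpositions form an $I$-subgroup $H$ with $I_H$ exactly the $n(n-1)$ expected transpositions (this is true, by the standard description of reflection subgroups of Weyl groups, and is treated at the same level of rigor in the paper's own Theorem~\ref{thm:5.1} and Theorem~\ref{lem:6.3}).
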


\pf  By Lemma \ref{lem:6.1}, the algebra $B(G)$ is non-degenerate for any of the groups above. 
Therefore, $B(G)$ is isomorphic to the Griess algebra of $V$ and that of its simple quotient. 
By Table 2, $B(G)$ is isomorphic to the Griess algebra of $K(E_6,2)$ (resp., $K(E_7,2)$  and  $K(E_8,2)$)  if $G=\mathrm{O}_{6}^-(2)$ (resp.,  $\mathrm{Sp}_{6}(2)$ and $\mathrm{O}_{8}^+(2)$) and we can identify their Ising vectors. 

Let  $J$ be the maximal ideal of $V$ and assume $V$ is not simple. Then the top weight $h$ of $J$ is greater than or equal to $3$ since the Griess algebras of $V$ and its simple quotient are isomorphic. 
 
\paragraph{Case 1: $G=\mathrm{O}_6^-(2)$.}
In this case,  we may assume  $\mathcal{E}=E_V=\{ w^-(\alpha) \mid \alpha\in \Delta_{E_6}^+\}$.
Let $\mathcal{E}'=\{ w^{-}(\alpha)\in \mathcal{E}\mid \alpha \in \Delta_{A_{5}}^+\}$. Then $H=\la \sigma_e \mid e\in \mathcal{E}'\ra \cong \mathfrak{S}_6$; thus,  $W=\la \mathcal{E}'\ra \cong K(A_5,2)$ by \cite[Theorem 4.1]{JLY}. 
By \cite{JL}, $K(A_5,2)$  has four
irreducible modules of $\sigma$-type with top weights $0$, $3/4$, $5/4$ and $3/2$.

Applying Lemma \ref{lem:3.14} with $I_G=\abs{36}$, $I_H=\abs{15}$, $k_G=20$, $k_H=8$ and
$\lambda=3/2$, we obtain $h\leq 72/35<3$, a contradiction.
Therefore $J=0$ and $V$ is simple and isomorphic to $K(E_6,2)$
by Theorem \ref{thm:3.7}. 

\paragraph{Case 2: $G=\mathrm{Sp}_6(2)$.}
In this case,  $\mathcal{E}=E_V=\{ w^-(\alpha) \mid \alpha\in \Delta_{E_7}^+\}$. 
Let $\mathcal{E}'=\{ w^{-}(\alpha)\in \mathcal{E}\mid \alpha \in \Delta_{A_{7}}^+\}$.
Then $H=\la \sigma_e \mid e\in \mathcal{E}'\ra \cong \mathfrak{S}_8$.   
By \cite[Theorem 4.1]{JLY}, $W=\la \mathcal{E}'\ra \cong K(A_7,2)$,  which has five irreducible modules
of $\sigma$-type with top weights $0$, $4/5$, $7/5$, $9/5$ and $2$ by \cite{JL}.

Applying Lemma \ref{lem:3.14} with $\abs{I_G}=63$, $\abs{I_H}=28$, $k_G=32$, $k_H=12$ and
$\lambda=2$, we obtain $h\leq 9/4<3$, a contradiction.
Therefore $J=0$ and $V$ is simple and isomorphic to $K(E_7,2)$
by Theorem  \ref{thm:3.7}. 

\paragraph{Case 3: $G=\mathrm{O}_8^+(2)$.}
In this case, we take $\mathcal{E}=\{ w^-(\alpha) \mid \alpha\in \Delta_{E_8}\}$.
Take a subsystem $\Delta_{E_7}$ of $\Delta_{E_8}$ and let
$\mathcal{E}'=\{ w^-(\alpha)\in \mathcal{E} \mid \alpha\in \Delta_{E_7}\}$,
$W=\la \mathcal{E}'\ra$ and
$H=\la \sigma_e\mid e\in \mathcal{E}'\ra\cong \mathrm{Sp}_6(2)$.
By Case 2, $W$ is simple and isomorphic to $K(E_7,2)$. 
As we have already seen in Case 2, the maximum top weight of irreducible $K(E_7,2)$-modules
of $\sigma$-type is less than or equal to $9/4$.

Applying Lemma \ref{lem:3.14} with $\abs{I_G}=120$, $\abs{I_H}=63$, $k_G=56$, $k_H=32$ and
$\lambda=9/4$, we obtain $h\leq 75/28<3$, a contradiction.
Therefore $J=0$ and $V$ is simple and isomorphic to $K(E_8,2)$
by Theorem \ref{thm:3.7}.
\medskip

\paragraph{Case B:} $G =F^2_n{:}\mathfrak{S}_n~(n\geq 4)$, $2^6{:}\mathrm{O}_6^-(2)$,  $2^6{:}\mathrm{Sp}_6(2)$, 
  $\mathrm{O}_8^-(2)$, $\mathrm{Sp}_8(2)$, $2^8{:}\mathrm{O}_8^+(2)$, or $\mathrm{O}_{10}^+(2)$.

\begin{thm}\label{thm:5.1}
%Let $V$ be a VOA satisfying Condition \ref{cond:1} and take a finite $\sigma$-closed subset $\mathcal{E}$ of $E_V$ such that $V=\la \mathcal{E}\ra$ and $V_2=\C \mathcal{E}$. Set $G=\la \sigma_e\mid e\in \mathcal{E}\ra$.
Let $G= F_n^2{:}\mathfrak{S}_n~(n\geq 4)$. Then the Griess algebra of $V$ is isomorphic
to the non-degenerate quotient of $B(G)$ and $V$ is simple.
\end{thm}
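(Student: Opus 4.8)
The plan is to prove the sharper statement $V\cong V_{\sqrt{2}D_n}^+$; both assertions of the theorem then follow, since by Lemma~\ref{lem:6.1} and Table~1 the non-degenerate quotient of $B(F_n^2{:}\mathfrak{S}_n)$ is exactly the Griess algebra of the simple VOA $V_{\sqrt{2}D_n}^+$, and by Theorem~\ref{thm:3.7}(4) it therefore suffices to show that $V$ is simple. In contrast to Case A, the algebra $B(G)$ is now degenerate (Lemma~\ref{lem:6.1}), so the surjection $B(G)\to V_2$ need not be injective and we may not identify the Griess algebra of $V$ with $B(G)$ in advance; since this surjection preserves the form, a nonzero image of the radical of $B(G)$ would lie in the radical of the form on $V_2$, hence force the maximal ideal $J$ of $V$ to have top weight $2$. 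Thus the degeneracy is absorbed precisely when we rule out $J_2\neq 0$ and ultimately identify $V$ with $V_{\sqrt{2}D_n}^+$.

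For the base case $n=4$ I would take the $A_3$-subsystem inside $D_4$ and the subVOA $W=\la\mathcal{E}'\ra\cong V_{\sqrt{2}A_3}^+$ generated by the corresponding Ising vectors, which is simple by Theorem~\ref{thm:4.14}; by Lemma~\ref{lem:4.1} its irreducible $\sigma$-type modules have top weights $0,3/4,1$, so $\lambda=1$. Since $W$ is rational, a nonzero maximal ideal $J$ would decompose into irreducible $\sigma$-type $W$-modules, and Lemma~\ref{lem:3.14} with $\abs{I_G}=24$, $\abs{I_H}=12$, $k_G=16$, $k_H=8$ and $\lambda=1$ gives top weight $h\leq 4/3<2$, impossible for a nonzero ideal of an OZ-type VOA. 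Hence $V$ is simple for $n=4$; a similar computation in fact settles the small cases $n\leq 6$, and disposes of the degeneracy there as well.

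For larger $n$ I would follow the architecture of the proof of Theorem~\ref{thm:4.14}. Take $W=V_{\sqrt{2}A_{n-1}}^+$ to be the subVOA generated by $\{w^\pm(\alpha)\mid\alpha\in\Delta_{A_{n-1}}^+\}$, simple by Theorem~\ref{thm:4.14}, and let $U=\com(W,V)$, a VOA of central charge $1$ with conformal vector $\xi$ whose simple quotient is the rank-one lattice VOA $V_{\Z\gamma}^+$, where $\Z\gamma$ spans the orthogonal complement of $\sqrt{2}A_{n-1}$ in $\sqrt{2}D_n$. Using the classification of irreducible $\sigma$-type $V$-modules (obtained as in Lemma~\ref{lem:4.9} by analyzing the quotient $A(V)/I$ of the Zhu algebra through a Coxeter-type presentation) together with the $c=1$ fusion rules, I would reproduce the chain of Lemmas~\ref{lem:4.10}--\ref{lem:4.13}: first that the composition factors of $V$ contain no highest weight vector for $\la\eta\ra\tensor\la\xi\ra$ of weight $(0,m^2)$, where $\eta$ is the conformal vector of $W$; then that the $\la\xi\ra$-module generated by the relevant weight-four primary vector is simple; then that the subalgebra generated by this vector together with the conformal vector is isomorphic to $M(1)^+$; and finally that $U$ contains a conformal subalgebra isomorphic to $V_{\Z\gamma}^+$. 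Consequently $V$ contains a full-rank rational subVOA $V_\Gamma^+$ with $\Gamma=\sqrt{2}A_{n-1}\oplus\Z\gamma$; decomposing $V=Z\oplus J_1$ over $V_\Gamma^+$ with $Z\cong V_{\sqrt{2}D_n}^+$ and using the fusion rules and simple-current structure of $V_\Gamma^+$-modules as at the end of the proof of Theorem~\ref{thm:4.14}, I would conclude $Z\cdot Z=Z$, so $Z$ is a subVOA; since $V$ is generated by its Griess algebra $Z_2$, this forces $V=Z\cong V_{\sqrt{2}D_n}^+$.

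The main obstacle is that Lemma~\ref{lem:3.14} alone cannot finish the job for large $n$: the subVOA $W$ carries spinor-type $\sigma$-modules whose top weight grows like $n/4$, so the resulting bound on $h$ exceeds $3$ and the elementary argument of Case A breaks down, forcing the full commutant and module-theoretic machinery. Within that machinery the delicate point is the control of the $c=1$ commutant $U$: the arguments underlying Lemmas~\ref{lem:4.10}--\ref{lem:4.12} run smoothly when the weight $(\gamma\mymid\gamma)/2$ of the generating vector $e^\gamma+e^{-\gamma}$ is not a perfect square, but this weight equals $n$ (for $n$ even) or $4n$ (for $n$ odd), so exactly when $n$ is a perfect square the lattice weights collide with the Heisenberg weights occurring in $M(1)^+$ and the bookkeeping separating the lattice part from the Heisenberg part must be reworked. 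Handling these square cases, together with tracking the degeneracy of $B(G)$ through the composition series, is where the real work lies.
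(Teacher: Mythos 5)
Your base case $n=4$ and your observation that the elementary bound of Lemma~\ref{lem:3.14} settles the small cases but not the large ones are both correct: with $H\cong F_n{:}\mathfrak{S}_n$ and $W\cong V_{\sqrt{2}A_{n-1}}^+$ the bound is $\lambda\cdot\tfrac{n}{n-1}$ with $\lambda\approx n/4$, which already reaches $2$ at $n=7$. The genuine gap is in your inductive step for large $n$. You propose to transplant the machinery of Lemmas~\ref{lem:4.10}--\ref{lem:4.13} to the pair $V\supset W=V_{\sqrt{2}A_{n-1}}^+$, whose commutant has simple quotient $V_{\Z\gamma}^+$ with $(\gamma\mymid\gamma)/2=n$ ($n$ even) or $4n$ ($n$ odd). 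But every step of that machinery rests on the fact that in Section 3 the corresponding weight $n(n-1)$ is \emph{never} a perfect square: this is what makes $M(1,\alpha)$ simple, what drives Lemma~\ref{lem:4.10}, and what makes the $c=1$ fusion rules \eqref{eq:4.19} and \eqref{eq:4.24} applicable. In your setting the hypothesis fails for infinitely many $n$ (all perfect squares $n=9,16,25,\dots$, and all $n$ with $4n$ a square when $n$ is odd), and you explicitly concede that the argument "must be reworked" there without doing so. You likewise defer the classification of irreducible $\sigma$-type $V$-modules (the analogue of Lemma~\ref{lem:4.9}, whose Coxeter-presentation proof would have to be redone for $F_n^2{:}\mathfrak{S}_n$, where $B(G)$ is degenerate so the Ising vectors cannot be identified with those of $V_{\sqrt{2}D_n}^+$ at the outset). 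So the proposal is a program whose hardest part --- by your own admission "where the real work lies" --- is missing.

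The paper avoids these obstructions entirely by a different induction. First it kills the radical $J_2$ of the Griess algebra by a trace argument using \emph{two} subgroups: $H\cong F_n^2{:}\mathfrak{S}_n$ (the inductive hypothesis makes $W_2$ non-degenerate, bounding $\dim J_2\leq n-1$) and $K\cong F_{n+1}{:}\mathfrak{S}_{n+1}$, where Theorem~\ref{thm:4.14} gives $\la \mathcal{E}''\ra\cong V_{\sqrt{2}A_n}^+$ whose non-adjoint $\sigma$-type modules all have top levels of dimension at least $n$; hence the conformal vector of this subVOA annihilates $J_2$, and indecomposability of $G$ together with \eqref{eq:2.3} and Lemma~\ref{lem:3.13} forces $\omega_{(1)}=0$ on $J_2$, which is absurd. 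Then, with the Griess algebras identified, simplicity is proved via the orthogonal decomposition $\Delta_{D_n}\supset \Delta_{A_1}\perp\Delta_{A_1}\perp\Delta_{D_{n-2}}$: the inductively known $U^2\cong V_{\sqrt{2}D_{n-2}}^+$ produces a \emph{rational, full-rank} conformal subVOA $V_{\sqrt{2}(A_1\oplus A_1)}^+\tensor V_{\sqrt{2}D_{n-2}}^+$ of $V$, so one splits $V=M\oplus J$ and finishes with the ordinary fusion rules of lattice-type VOAs and a simple-current argument --- no non-rational $c=1$ commutant, hence no square/non-square dichotomy at all. If you wish to salvage your route you must genuinely rework the $c=1$ analysis in the square cases; the paper's choice of $D_{n-2}$ rather than $A_{n-1}$ as the inductive anchor is precisely what makes that unnecessary.
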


\pf We will prove the theorem by induction on $n$, $n\geq 4$.  
Note that 
the non-degenerate quotient of $B(G)$ is isomorphic to the Griess algebra of $V_{\sqrt{2}D_n}^+$ when $G= F_n^2{:}\mathfrak{S}_n~(n\geq 4)$.

First we consider the case $n=4$, i.e., $G\cong F^2_4{:}\mathfrak{S}_4$.
Suppose $V$ is not simple. Then the maximal ideal $J$ has the top weight greater than or equal to $2$. 

Take a 3-transposition subgroup $H\cong F_4{:}\mathfrak{S}_4$ of $G$ and consider the subVOA
$W=\la \mathcal{E}'\ra$ with $\mathcal{E}'=\{ e\in \mathcal{E} \mid \sigma_e\in H\}$.
Then $W$ is simple and isomorphic to $V_{\sqrt{2}A_3}^+$ by Lemma \ref{lem:4.7}.
It follows from Remark \ref{rem:4.2} that the top weights of irreducible $W$-modules
of $\sigma$-type are  less than and equal to $1$.
Applying Lemma \ref{lem:3.14} with $\abs{I_G}=24$, $\abs{I_H}=12$, $k_G=16$, $k_H=8$ and
$\lambda=1$, we obtain $h\leq 4/3<2$, a contradiction.
Therefore $J=0$ and $V$ is simple.

Now assume that $V$ is simple if $G=F^2_n{:}\mathfrak{S}_n$ for some $n\geq 4$ and consider the case $G=F^2_{n+1}{:}\mathfrak{S}_{n+1}$.

We will first show that the Griess algebra of $V$ is isomorphic to that of $V_{\sqrt{2}D_n}^+$.  
Suppose false. The bilinear form on the Griess algebra of $V$ is degenerate. Let  
$J_2$ be the radical of the Griess algebra of $V$. 
Note that the algebra $B(F_n^2{:}\mathfrak{S}_n)$ has the dimension $\abs{I_G}=2n(n-1)$ and 
the non-degenerate quotient of $B(F_n^2{:}\mathfrak{S}_n)$
is realized by the Griess algebra of $V_{\sqrt{2}D_n}^+$ which is of dimension $n(3n-1)/2$ (cf.~Table 1).
Therefore, the radical of $B(F_n^2{:}\mathfrak{S}_n)$ has the dimension
$2n(n-1)-n(3n-1)/2=n(n-3)/2$.

Take a 3-transposition subgroup $H\cong F_n^2{:}\mathfrak{S}_n$ of $G$ and consider the subVOA
$W=\la \mathcal{E}'\ra$ with $\mathcal{E}'=\{ e\in \mathcal{E} \mid \sigma_e\in H\}$.
Then $W$ is simple by inductive assumption and the bilinear form of $V$ restricted to
the Griess algebra of $W$ is non-degenerate.
Therefore, we have the following bound on the dimension of the radical.
\[
  \dim J_2\leq \dfr{(n+1)(n+1-3)}{2}-\dfr{n(n-3)}{2}=n-1.
\]
Consider another 3-transposition subgroup $K=F_{n+1}{:}\mathfrak{S}_{n+1}$ of $G$.
Then the subVOA $U=\la \mathcal{E}''\ra$ with
$\mathcal{E}''=\{ e\in \mathcal{E} \mid \sigma_e\in K\}$ is isomorphic to $V_{\sqrt{2}A_n}^+$
by Theorem \ref{thm:4.14}.
The irreducible $V_{\sqrt{2}A_n}^+$-modules of $\sigma$-type are as described in the proof
of Lemma \ref{lem:4.1} and it follows from Remark \ref{rem:4.2} that the top level of
an irreducible $V_{\sqrt{2}A_n}^+$-module of $\sigma$-type has dimension at least $n$
if it is not isomorphic to the adjoint module $V_{\sqrt{2}A_n}^+$.
Therefore, the zero-mode of the conformal vector of $U$ acts on the radical $J_2$ as 0.
Then since $G$ is indecomposable, the conformal vector of $V$ also acts on $J_2$ as 0
by \eqref{eq:2.3} and Lemma \ref{lem:3.13} which is absurd.
Therefore $J_2=0$ and the Griess algebra of $V$ is non-degenerate; thus, the Griess algebra of $V$ is isomorphic to that of $V_{\sqrt{2}D_n}^+$. 

In this case, we may assume  $\mathcal{E}=E_V=\{w^\pm(\alpha)\mid \alpha\in \Delta_{D_n}^+\}$
\cite{LSY}.
%and $G=G_V=F^2_n{:}\mathfrak{S}_n$.
Let $\epsilon_1,\dots,\epsilon_n$ be an orthonormal basis of $\R^n$, i.e.,
$(\epsilon_i\mymid \epsilon_j)=\delta_{ij}$. A root system of type $D_n$ is given by 
$
  \Delta_{D_n}=\{ \pm \epsilon_i\pm \epsilon_j \mid 1\leq i<j\leq n\}.
$ 
We fix a set of simple roots $\{ \alpha_1,\dots,\alpha_n\}$ of $\Delta_{D_n}$
so that $\alpha_i=\epsilon_i-\epsilon_{i+1}$ for $1\leq i<n$ and
$\alpha_n=\epsilon_{n-1}+\epsilon_n$.

Let $\alpha_0=\epsilon_1+\epsilon_2$ be the highest root of $\Delta_{D_n}$.
Set $ \Delta_0=\{  \pm \alpha_0 \}$, $\Delta_1=\{ \pm \alpha_1\}$ and
$\Delta_2=\{ \pm \epsilon_i\pm \epsilon_j \mid 3\leq i<j\leq n\}$.
Then $\Delta_0\cong \Delta_1\cong \Delta_{A_1}$ and $\Delta_2\cong \Delta_{D_{n-2}}$
where we identify $D_2=A_1\oplus A_1$ and $D_3=A_3$.
We also set $\Delta'=\Delta_0 \sqcup \Delta_1\sqcup \Delta_2$.
Let $U^i=\la w^\pm(\alpha) \mid \alpha \in \Delta_i\ra$ for $i=0,1,2$.
Then $U^0\cong U^1\cong L(\shf,0)\tensor L(\shf,0)$.
By inductive assumption,  $U^2\cong V_{\sqrt{2}D_{n-2}}^+$.
Note that
$(V_{\sqrt{2}A_1}^+)^{\tensor 2} \cong L(\shf,0)^{\tensor 4}$ and
$V_{\sqrt{2}D_3}^+\cong V_{\sqrt{2}A_3}^+$.
Clearly, $\la w^\pm (\alpha) \mid \alpha \in \Delta'\ra=U^0\tensor U^1\tensor U^2$.
Let
\[
  x=\sum_{1\leq i\leq 2\atop 3\leq j\leq n}
  (-1)^i \l( w^-(\epsilon_i-\epsilon_j)+w^-(\epsilon_i+\epsilon_j)
  +w^+(\epsilon_i-\epsilon_j)+w^+(\epsilon_i+\epsilon_j)\r) .
\]
It follows from \eqref{eq:inner} and \eqref{eq:prod} that $2w^\pm(\alpha)_{(1)}x=x$ for
$\alpha\in \Delta_0\sqcup \Delta_1$ and $w^\pm(\beta)_{(1)}x=0$ for $\beta\in \Delta_2$.
Thus the $U^0\otimes U^1\cong L(\shf,0)^{\tensor 4}$-submodule $X$ of $V$ generated by $x$ is
isomorphic to $L(\shf,\shf)^{\otimes 4}$ and the simple current extension
$U^0\tensor U^1\oplus X$ is isomorphic to $V_{\sqrt{2}(A_1\oplus A_1)}^+$
by Proposition 4.1 of \cite{LSY} which is in the commutant of $U^2$ in $V$.
Set
\[
   W = (U^0\otimes U^1\oplus X)\otimes U^{2}\cong V_{\sqrt{2}(A_1\oplus A_1)}^+\tensor V_{\sqrt{2}D_{n-2}}^+.
 \]
Then $W$ is a rational conformal subVOA of $V$ by \cite{DJL}.
Let $\gamma=(\alpha_0+\alpha_1)/2$ and $\delta=(\alpha_{n-1}+\alpha_n)/2$.
One has the following decomposition of $V_{\sqrt{2}D_n}^+$ as a
$V_{\sqrt{2}(A_1\oplus A_1)}^+\tensor V_{\sqrt{2}D_{n-2}}^+$-module.
\begin{equation}\label{eq:5.3}
\begin{split}
  V_{\sqrt{2}D_{n}}^+
  \cong~
  & \l( V_{\sqrt{2}(A_1\oplus A_1)}^+\tensor V_{\sqrt{2}D_{n-2}}^+\r)
  \oplus \l( V_{\sqrt{2}(A_1\oplus A_1)}^-\tensor V_{\sqrt{2}D_{n-2}}^-\r)
  \medskip\\
  & \oplus \l( V_{\sqrt{2}(A_1\oplus A_1)+\gamma}^+\tensor V_{\sqrt{2}D_{n-2}+\delta}^+\r)
  \oplus \l( V_{\sqrt{2}(A_1\oplus A_1)+\gamma}^-\tensor V_{\sqrt{2}D_{n-2}+\delta}^-\r) .
\end{split}
\end{equation}
Let $J$ be the maximal ideal of $V$.
Then the simple quotient $V/J$ is isomorphic to $V_{\sqrt{2}D_n}^+$.
Since $W$ is rational, there is a $W$-submodule $M$ of $V$ such that $V=M\oplus J$
and $M\cong V_{\sqrt{2}D_n}^+$.
By \eqref{eq:5.3}, there exist irreducible $W$-submodules $W^0$, $W^1$, $W^2$ and $W^3$
of $V$ such that $M = W^0 \oplus W^1 \oplus W^2 \oplus W^3$ as a $W$-module, where 
\[
\begin{array}{l}
  W^0 = W\cong V_{\sqrt{2}(A_1\oplus A_1)}^+\tensor V_{\sqrt{2}D_{n-2}}^+,~~~
  W^1\cong V_{\sqrt{2}(A_1\oplus A_1)}^-\tensor V_{\sqrt{2}D_{n-2}}^-,
  \medskip\\
  W^2\cong V_{\sqrt{2}(A_1\oplus A_1)+\gamma}^+\tensor V_{\sqrt{2}D_{n-2}+\delta}^+,~~~
  W^3\cong V_{\sqrt{2}(A_1\oplus A_1)+\gamma}^-\tensor V_{\sqrt{2}D_{n-2}+\delta}^-.
\end{array}
\]
Since the Griess algebra of $V$ and that of the simple quotient $V/J\cong V_{\sqrt{2}D_n}^+$
are isomorphic, the top weight of $J$ is at least 3 and hence there is no irreducible
$W$-submodule of $J$ isomorphic to $W^i$ for $0\leq i\leq 3$.
Then it follows from the fusion rules of $W$-modules (cf.~\cite{ADL}) that $M$ forms a
simple subVOA of $V$ which is a simple current extension of $W$.
Since $V_2=M_2$, all the Ising vectors $w^\pm(\alpha)\in E_V$ are contained in the subVOA $M$.
Therefore, we have $V=M$ and $J=0$ as $V$ is generated by $E_V$.
\qed

\begin{thm}\label{lem:6.3}
  Suppose $G$ is one of the following.
  \[
    2^{6}{:}\mathrm{O}_{6}^-(2),~~~
    2^{6}{:}\mathrm{Sp}_{6}(2),~~~
    \mathrm{O}_8^-(2),~~~
    \mathrm{Sp}_8(2),~~~
    2^8{:}\mathrm{O}_8^+(2),~~~
    \mathrm{O}_{10}^+(2).
  \]
  Then $V$ is simple and the Griess algebra of $V$ is isomorphic to the non-degenerate quotient of $B(G)$.
\end{thm}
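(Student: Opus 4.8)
The plan is to follow the two–stage template of Theorems \ref{thm:5.1} and \ref{thm:5.2}, treating the seven groups recursively in the order in which their proper $I$-subgroups have already been settled: first $2^6{:}\mathrm{O}_6^-(2)$ and $2^6{:}\mathrm{Sp}_6(2)$ (whose $I$-subgroups of types $A$ and $D$ are handled by Theorems \ref{thm:4.14} and \ref{thm:5.1}, and whose reduced generating sets $\mathrm{O}_6^-(2)$, $\mathrm{Sp}_6(2)$ are Case A), then $\mathrm{O}_8^-(2)$ and $\mathrm{Sp}_8(2)$, and finally $2^8{:}\mathrm{O}_8^+(2)$ and $\mathrm{O}_{10}^+(2)$. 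For each $G$ the non-degenerate quotient of $B(G)$ is realized by the VOA listed beside it in Theorem \ref{thm:main}, and by Theorem \ref{thm:3.7}(4) that VOA is already the simple quotient of $V$. Thus the theorem amounts to two assertions: (i) the Griess algebra of $V$ is already non-degenerate, i.e. equal to the non-degenerate quotient of $B(G)$; and (ii) the maximal ideal of $V$ vanishes.

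For (i) I would bound the radical $J_2$ of the Griess algebra of $V$. Choose an $I$-subgroup $H$ for which $W=\la \mathcal{E}'\ra$, with $\mathcal{E}'=\{e\in\mathcal{E}\mid \sigma_e\in H\}$, is already known to be simple; then $W_2$ is the non-degenerate quotient of $B(H)$, so $\mathrm{rad}\,B(H)$ maps to $0$ in $V_2$ and $\dim J_2\le \dim\mathrm{rad}\,B(G)-\dim\mathrm{rad}\,B(H)$, exactly as in the inductive step of Theorem \ref{thm:5.1}. The radical dimensions are read off from $\abs{I_G}$ and the Griess dimensions $d$ in Tables 1--4. To force $J_2=0$ I would then pick a second $I$-subgroup $K$ of type $A$, so that $U=\la\{e\mid\sigma_e\in K\}\ra\cong V_{\sqrt{2}A_m}^+$ by Theorem \ref{thm:4.14}; by Remark \ref{rem:4.2} every non-adjoint irreducible $\sigma$-type $U$-module has top level of dimension at least $m+1$. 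If the bound on $\dim J_2$ is strictly smaller than $m+1$, no such module can sit inside the weight-two radical, whence $(\w_U)_{(1)}$ annihilates $J_2$; since $G$ is indecomposable, Lemma \ref{lem:3.13} together with \eqref{eq:2.3} upgrades this to $\tr_{J_2}(\w_V)_{(1)}=0$, and as $(\w_V)_{(1)}=2\cdot\id$ on $V_2$ this gives $J_2=0$. Only the arithmetic of the two dimension counts changes from group to group.

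Once the Griess algebra is non-degenerate it coincides with that of the simple quotient, so any nontrivial maximal ideal $J$ (being concentrated in degree $\ge 3$ since $V$ is of OZ-type) has top weight $h\ge 3$. For (ii) I would first try to contradict this via Lemma \ref{lem:3.14}: taking a rational simple $I$-subVOA $W$ (of type $A$ or $D$, or one of the previously settled $E$-cases) with a known bound $\lambda$ on the top weights of its $\sigma$-type modules, the inequality $h\le \lambda(k_H+8)\abs{I_G}/\big((k_G+8)\abs{I_H}\big)$ should fall below $3$. For example, for $\mathrm{Sp}_8(2)$ one takes $W\cong K(E_7,2)$ with $\lambda=9/4$, $\abs{I_H}=63$, $k_H=32$ against $\abs{I_G}=255$, $k_G=128$, yielding $h\le 75/28<3$. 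When the inequality is too tight — which I expect for the largest groups $2^8{:}\mathrm{O}_8^+(2)$ and $\mathrm{O}_{10}^+(2)$, where the natural subgroup ratios are very close to $1$ — I would instead reproduce the simple-current-extension argument of Theorem \ref{thm:5.1}: exhibit a rational conformal subVOA $W$ inside $V$ coming from an orthogonal root-subsystem decomposition of $E_8$ (e.g. $A_1\oplus A_1\oplus D_6$ or $A_1\oplus E_7$), use rationality to split off a complement $M$ of $J$ isomorphic to the target VOA, and then invoke the fusion rules of $W$-modules to show $M\cd M=M$, so that $M$ is a subVOA containing $V_2=\C\mathcal{E}$, forcing $V=M$.

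The main obstacle I anticipate is the commutant case $G=\mathrm{O}_8^-(2)$, realized by $\com(K(A_2,2),V_{\sqrt{2}E_8}^+)$ rather than by a lattice-type VOA: here the choice of useful $I$-subgroups, the identification of the corresponding subVOAs, and the top weights of their $\sigma$-type modules are far less mechanical than in the lattice cases, so both the radical bound in (i) and the subVOA $W$ in (ii) must be arranged by hand. A secondary difficulty is the bookkeeping: step (i) requires the radical bound to be strictly below the smallest non-adjoint top level of the auxiliary type-$A$ subVOA, and step (ii) requires the top-weight bounds $\lambda$ extracted from the module classifications of $V_{\sqrt{2}A_m}^+$, $V_{\sqrt{2}D_m}^+$ and the previously settled $E$-type examples; verifying these numerics for each of the seven groups (and, for $2^8{:}\mathrm{O}_8^+(2)$ versus $\mathrm{O}_{10}^+(2)$, keeping track of the two distinct generating sets of $V_{\sqrt{2}E_8}^+$ noted in Remark \ref{rem:3.8}) is where the genuine case-by-case labour lies.
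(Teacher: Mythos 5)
Your overall tool is the right one — the paper also runs a recursive, case-by-case application of Lemma \ref{lem:3.14} over exactly these six groups, seeded by Theorems \ref{thm:4.14}, \ref{thm:5.1} and \ref{thm:5.2} — but the two-stage architecture you build around it is both unnecessary and, at its first stage, broken. The paper needs no separate non-degeneracy step at all: since $V$ is of OZ-type, any proper ideal $J$ automatically has top weight $h\geq 2$ (its weight-$2$ part is then precisely the radical of the Griess algebra), and for each group one can choose the subgroup $H$ so that the bound of Lemma \ref{lem:3.14} is strictly below $2$, not merely below $3$. Concretely the paper takes $H\cong F_6{:}\mathfrak{S}_6$ (so $W\cong V_{\sqrt{2}A_5}^+$, $\lambda=3/2$, bound $9/5$) for $2^6{:}\mathrm{O}_6^-(2)$; $H\cong 2^6{:}\mathrm{O}_6^-(2)$ (so $W\cong V_{\sqrt{2}E_6}^+$, $\lambda=4/3$, bounds $14/9$ and $68/45$) for $2^6{:}\mathrm{Sp}_6(2)$ and $\mathrm{O}_8^-(2)$; $H\cong \mathrm{O}_8^+(2)$ (so $W\cong K(E_8,2)$, $\lambda=3/2$, bound exactly $3/2$) for $\mathrm{Sp}_8(2)$; and $H\cong 2^6{:}\mathrm{Sp}_6(2)$ (so $W\cong V_{\sqrt{2}E_7}^+$, $\lambda=3/2$, bound $12/7$) for both $2^8{:}\mathrm{O}_8^+(2)$ and $\mathrm{O}_{10}^+(2)$. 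One application of Lemma \ref{lem:3.14} per group then kills $J$ outright, simplicity follows, and Theorem \ref{thm:3.7}(4) yields the Griess-algebra statement. In particular your anticipated trouble with the two largest groups does not materialize ($12/7<2$ in both cases), so no simple-current-extension machinery and no special treatment of the commutant case $\mathrm{O}_8^-(2)$ are ever needed; your choice $W\cong K(E_7,2)$, $\lambda=9/4$ for $\mathrm{Sp}_8(2)$ is what forces your bound up to $75/28$, which is only useful if $h\geq 3$ is already known.

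That is where the genuine gap lies: your stage (i) is essential to your stage (ii), and its dimension count fails numerically from the very first group. For $G=2^6{:}\mathrm{O}_6^-(2)$ one has $\dim\mathrm{rad}\,B(G)=72-57=15$, and the subgroups $H$ with $W$ already known simple ($F_6{:}\mathfrak{S}_6$ or $\mathrm{O}_6^-(2)$, with non-degenerate $B(H)$, or $F_5^2{:}\mathfrak{S}_5$ with $5$-dimensional radical) give at best $\dim J_2\leq 10$; meanwhile the only type-$A$ subVOA available inside $V$ is $V_{\sqrt{2}A_5}^+$ (from $A_5\subset E_6$), whose non-adjoint $\sigma$-type modules have top levels of dimension as small as $5$ (namely $V_{\sqrt{2}A_5}^-$; Remark \ref{rem:4.2} gives $\binom{6}{i}\geq 6$ for the others). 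So the inequality you need — radical bound strictly below the minimal non-adjoint top-level dimension — reads $10<5$ and is false; the same failure occurs for $2^6{:}\mathrm{Sp}_6(2)$ ($\dim\mathrm{rad}\,B(G)=35$, bound at best $20$, against top levels of size about $7$ or $8$). The inductive step of Theorem \ref{thm:5.1} works only because of the arithmetic coincidence special to the $D$-series, $\tfrac{(n+1)(n-2)}{2}-\tfrac{n(n-3)}{2}=n-1<n$, and this coincidence does not persist for the groups at hand. The correct repair is not to strengthen stage (i) but to discard it: choose $H$ as the paper does so that the Lemma \ref{lem:3.14} bound drops below $2$, after which non-degeneracy of the Griess algebra is a consequence of simplicity rather than a prerequisite for proving it.
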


\pf
If $V$ is not simple, then the maximal ideal $J$ of $V$ has the top weight greater than or equal to $2$ since $V$ is of OZ-type. In this case, the weight two subspace $J_2$ of $J$ coincides with the radical of the bilinear form on the Griess algebra of $V$.

\paragraph{Case 1:~$G=2^{6}{:}\mathrm{O}_6^-(2)$.}
Take a 3-transposition subgroup $H\cong F_6{:}\mathfrak{S}_6$ of $G$ and consider
the subVOA $W=\la \mathcal{E}'\ra$ with $\mathcal{E}'=\{ e\in \mathcal{E} \mid \sigma_e\in H\}$.
Then $W\cong V_{\sqrt{2}A_5}^+$ by Theorem \ref{thm:4.14}.
By Remark \ref{rem:4.2}, the top weights of irreducible $W$-modules of $\sigma$-type are less than or equal to $3/2$.
Applying Lemma \ref{lem:3.14} with $\abs{I_G}=72$, $\abs{I_H}=30$, $k_G=40$, $k_H=16$
and $\lambda=3/2$, we obtain $h\leq 9/5\lneq 2$, a contradiction.
Therefore $J=0$ and $J_2=0$; thus $V$ is simple and isomorphic to $V_{\sqrt{2}E_6}^+$ by Theorem \ref{thm:3.7}.

\paragraph{Case 2:~$G=2^{6}{:}\mathrm{Sp}_6(2)$.}
Take a 3-transposition subgroup $H\cong 2^6{:}\mathrm{O}_6^-(2)$ of $G$ and consider
the subVOA $W=\la \mathcal{E}'\ra$ with $\mathcal{E}'=\{ e\in \mathcal{E} \mid \sigma_e\in H\}$.
Then $W\cong V_{\sqrt{2}E_6}^+$ by Case 1.
Since the irreducible modules of $V_{\sqrt{2}E_6}$ are classified \cite{AD}, it can be verified directly that the top weights of irreducible $W$-modules are less than or equal to $4/3$.

Applying Lemma \ref{lem:3.14} with $\abs{I_G}=126$, $\abs{I_H}=72$, $k_G=64$, $k_H=40$
and $\lambda=4/3$, we obtain $h\leq 14/9 < 2$, a contradiction.
Therefore $J=0$ and $V$ is simple and isomorphic to $V_{\sqrt{2}E_7}^+$ by Theorem \ref{thm:3.7}.

\paragraph{Case 3:~$G=\mathrm{O}_8^-(2)$.}
Take a 3-transposition subgroup $H\cong 2^6{:}\mathrm{O}_6^-(2)$ of $G$ and consider
the subVOA $W=\la \mathcal{E}'\ra$ with $\mathcal{E}'=\{ e\in \mathcal{E} \mid \sigma_e\in H\}$.
Then $W\cong V_{\sqrt{2}E_6}^+$ by Case 1.
As we have seen in the proof of Case 2, the top weights of
irreducible $W$-modules are less than or equal to $4/3$.

Applying Lemma \ref{lem:3.14} with $\abs{I_G}=136$, $\abs{I_H}=72$, $k_G=72$, $k_H=40$
and $\lambda=4/3$, we obtain $h\leq 68/45<2$, a contradiction.
Therefore $J=0$ and $V$ is simple and isomorphic to
$\com(K(A_2,2),V_{\sqrt{2}E_8}^+)$ by Theorem \ref{thm:3.7}.

\paragraph{Case 4:~$G=\mathrm{Sp}_8(2)$.}
Take a 3-transposition subgroup $H\cong \mathrm{O}_8^+(2)$ of $G$ and consider
the subVOA $W=\la \mathcal{E}'\ra$ with $\mathcal{E}'=\{ e\in \mathcal{E} \mid \sigma_e\in H\}$.
Then $W\cong K(E_8,2)$ by Theorem \ref{thm:5.2}.
Since $K(E_8,2)$ is a code VOA (cf.~\cite{LSY}), one can directly verify that there exist
exactly two inequivalent irreducible $K(E_8,2)$-modules of $\sigma$-type,
$M^0\cong K(E_8,2)$ and $M^1$ with the top weight $3/2$.
Indeed, we have a decomposition $V_{\sqrt{2}E_8}^+=L(\shf,0)\tensor M^0\oplus L(\shf,\shf)\tensor M^1$.
Therefore, the maximum top weight of irreducible $W$-modules of $\sigma$-type is $3/2$.

Applying Lemma \ref{lem:3.14} with $\abs{I_G}=255$, $\abs{I_H}=120$, $k_G=128$, $k_H=56$
and $\lambda=3/2$, we obtain $h\leq 3/2<2$, a contradiction.
Therefore $J_2=0$ and $V$ is isomorphic to $K(E_8,2)$ by Theorem \ref{thm:3.7}. 

\paragraph{Case 5:~$G=2^8{:}\mathrm{O}_8^+(2)$.}
Take a 3-transposition subgroup $H\cong 2^6{:}\mathrm{Sp}_6(2)$ of $G$ and consider
the subVOA $W=\la \mathcal{E}'\ra$ with $\mathcal{E}'=\{ e\in \mathcal{E} \mid \sigma_e\in H\}$.
Then $W\cong V_{\sqrt{2}E_7}^+$ by Case 2.
It is easy to verify that the top weights of irreducible $W$-modules are $\leq 3/2$.

Applying Lemma \ref{lem:3.14} with $\abs{I_G}=240$, $\abs{I_H}=126$, $k_G=112$, $k_H=64$
and $\lambda=3/2$, we obtain $h\leq 12/7 <2$, a contradiction.
Therefore $J=0$ and $V$ is isomorphic to $V_{\sqrt{2}E_8}^+$ by Theorem \ref{thm:3.7}. 

\paragraph{Case 6:~$G=\mathrm{O}_{10}^+(2)$.}
%%%%%%%%%%%%%%%%%%%
Take a 3-transposition subgroup $H\cong 2^6{:}\mathrm{Sp}_6(2)$ of $G$ and consider
the subVOA $W=\la \mathcal{E}'\ra$ with $\mathcal{E}'=\{ e\in \mathcal{E} \mid \sigma_e\in H\}$.
Then $W\cong V_{\sqrt{2}E_7}^+$ by Case 2.
As we have seen,the top weights of irreducible $W$-modules are $\leq 3/2$.

Applying Lemma \ref{lem:3.14} with $\abs{I_G}=496$, $\abs{I_H}=126$, $k_G=240$, $k_H=64$
and $\lambda=3/2$, we obtain $h\leq 12/7$, a contradiction.
Therefore $J=0$ and $V$ is  simple.  
Since the non-degenerate quotient of $B(\mathrm{O}_{10}^+(2))$ is isomorphic to the non-degenerate quotient
of $B(2^8{:}\mathrm{O}_8^+(2))$ and both are realized as the Griess algebra of $V_{\sqrt{2}E_8}^+$, $V$ is also isomorphic to $V_{\sqrt{2}E_8}^+$ by Case 5 since it is generated by $V_2$. 
\qed
\medskip

\section{Classification}

So far, we have shown that if $G=\la \mathcal{E}\ra$ is one of the groups in Theorem \ref{thm:main}
then $V$ is simple and unique.
By Theorems \ref{thm:hall} and \ref{thm:3.7}, the following cases are left.
\begin{equation}\label{eq:6.3}
\hspace{-3pt}
\begin{array}{l}
  F_n^m{:}\mathfrak{S}_n~(n\geq 4,~m\geq 3),~~~
  (\mathbb{F}_2^6)^m{:}\mathrm{O}_{6}^-(2)~(m\geq 2),~~~
  (\mathbb{F}_2^6)^m{:}\mathrm{Sp}_{6}(2)~(m\geq 2),~~
  \medskip\\
  (\mathbb{F}_2^8)^m{:}\mathrm{O}_{8}^+(2)~(m\geq 2),~~~
  (\mathbb{F}_2^8)^m{:}\mathrm{O}_{8}^-(2)~(m\geq 1),~~~
  (\mathbb{F}_2^8)^m{:}\mathrm{Sp}_{8}(2)~(m\geq 1),~~
  \medskip\\
  (\mathbb{F}_2^{10})^m{:}\mathrm{O}_{10}^+(2)~(m\geq 1),~~
  (\mathbb{F}_2^{2n})^m{:}\mathrm{O}_{2n}^-(2)~(n\geq 5,~ m\geq 0),
  \medskip\\
  (\mathbb{F}_2^{2n})^m{:}\mathrm{Sp}_{2n}(2)~(n\geq 5,~ m\geq 0),~~~
  (\mathbb{F}_2^{2n})^m{:}\mathrm{O}_{2n}^+(2)~(n\geq 6,~ m\geq 0).
\end{array}
\end{equation}
In order to eliminate these groups, we will consider the Gram matrices of the bilinear forms on  $B(G)$.  
Let $(G,I)$ be a 3-transposition group. Then the Gram matrix of the algebra $B(G)=\oplus_{i\in I}\C x^i$ 
is defined to be the matrix  $( (x^i|x^j))_{i,j\in I}$. By \eqref{eq:2.1}, it is equal to \begin{equation}\label{eq:6.4}
  \dfr{1}{32}(8\mathbb{I}+A)
\end{equation}
where $\mathbb{I}$ is the identity matrix of size $\abs{I}$ and $A$ is the adjacency matrix
of the graph structure on $I$. If $G$ is $\mathfrak{S}_n$, $\mathrm{Sp}_{2n}(2)$ or $\mathrm{O}_{2n}^\pm(2)$, the graph structure on $I$ is a strongly regular graph $\mathrm{srg}(v,k,\lambda,\mu)$
with the following parameters (cf.~\cite{Ma}).

\[
\renewcommand{\arraystretch}{1.5}
\begin{array}{c}
\begin{array}{|c||c|c|c|c|c|c|}
  \hline
  G & v & k & \lambda & \mu & r & s
  \\ \hline
  \mathfrak{S}_n~(n\geq 4) & n(n-1)/2 & 2(n-2) & n-2 & 4 & n-4 & -2
  \\ \hline
  \mathrm{O}_{2n}^+(2) & 2^{2n-1}-2^{n-1} & 2^{2n-2}-2^{n-1} & 2^{2n-3}-2^{n-2} & 2^{2n-3}-2^{n-1}
  & 2^{n-1} & -2^{n-2}
  \\ \hline
  \mathrm{O}_{2n}^-(2) & 2^{2n-1}+2^{n-1} & 2^{2n-2}+2^{n-1} & 2^{2n-3}+2^{n-2} & 2^{2n-3}+2^{n-1}
  & 2^{n-2} & -2^{n-1}
  \\ \hline
  \mathrm{Sp}_{2n}(2) & 2^{2n}-1 & 2^{2n-1} & 2^{2n-2} & 2^{2n-2} & 2^{n-1} & -2^{n-1}
  \\\hline
\end{array}
\\
\mbox{Table 5: Parameters of strongly regular graphs}
\end{array}
\renewcommand{\arraystretch}{1}
\]
In the table above, $r$ and $s$ denote the larger and smaller eigenvalues of $A$ other than
the eigenvalue $k$, the valency of $I$.
Note that the adjacency matrix $A$ has exactly three eigenvalues, $r$, $s$ and $k$.
The adjacency matrices of the extensions $F_{n}^m{:}\mathfrak{S}_n$ and
$\mathbb{F}_{2n}^m{:}G$ with $G=\mathrm{Sp}_{2n}(2)$ or $\mathrm{O}_{2n}^\pm(2)$ are given by
$\mathbb{J}_{2^m}\tensor A$ where $A$ is the adjacency matrix of the group without extension and
$\mathbb{J}_{2^m}$ is the all-ones matrix of size $2^m$ (cf.~\cite{Ma}).

\begin{lem}\label{lem:6.4}
  For $m\geq 3$ and $n\geq 4$, $G=F_n^m{:}\mathfrak{S}_n$ is impossible.
\end{lem}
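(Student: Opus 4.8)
The plan is to show that for $m\ge 3$ the bilinear form on $B(G)$ fails to be positive semidefinite, and then to contradict the positivity of the form on the Griess algebra of $V$. First I would record the reduction: the natural surjection $B(G)\to V_2$ of Lemma \ref{lem:3.6} preserves the form, and its kernel lies in the radical of the form on $B(G)$ (if $\phi(x)=0$ then $(x\mymid y)_{B(G)}=(\phi(x)\mymid\phi(y))=0$ for all $y$). Consequently $V_2$ inherits every signature of $B(G)$; in particular a vector of negative norm in $B(G)$ produces one in $V_2$. Thus it suffices to exhibit such a vector in $B(G)$ when $m\ge 3$.

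To do this I would combine the explicit shape \eqref{eq:6.4} of the Gram matrix with the tensor description of the adjacency matrix. For $G=F_n^m{:}\mathfrak{S}_n$ the adjacency matrix of the graph on $I$ is $\mathbb{J}_{2^m}\otimes A$, where $A$ is the adjacency matrix of the strongly regular graph attached to $\mathfrak{S}_n$, whose smallest eigenvalue is $s=-2$ by Table 5. Let $w$ be an $s$-eigenvector of $A$, and let $\mathbf{1}$ be the all-ones vector of length $2^m$, an eigenvector of $\mathbb{J}_{2^m}$ with eigenvalue $2^m$. Then $\mathbf{1}\otimes w$ is an eigenvector of $\mathbb{J}_{2^m}\otimes A$ with eigenvalue $2^m s=-2^{m+1}$, so by \eqref{eq:6.4} it is an eigenvector of the Gram matrix with eigenvalue
\[
  \tfrac{1}{32}\bigl(8-2^{m+1}\bigr).
\]
For $m\ge 3$ this is strictly negative, so the form on $B(G)$ is not positive semidefinite, in agreement with Lemma \ref{lem:6.1}. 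As a consistency check, the same quantity is $0$ when $m=2$ and positive when $m=1$, matching the fact that $F_n^2{:}\mathfrak{S}_n$ and $F_n{:}\mathfrak{S}_n$ do occur.

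It then remains to contradict the existence of a negative-norm vector in $V_2$. Here I would invoke the positivity of the form on the Griess algebra of $V$: choosing a $3$-transposition subgroup $H\cong F_n^2{:}\mathfrak{S}_n$ of $G$, Theorem \ref{thm:5.1} identifies $W=\langle \mathcal{E}'\rangle$ with the (unitary) VOA $V_{\sqrt{2}D_n}^+$; since $V$ is of $\sigma$-type it is a direct sum of irreducible $W$-modules, and the invariant form of $V$ restricted to these self-dual unitary components is definite, so the form on $V_2$ is positive semidefinite. This contradicts the vector of negative norm produced above, and hence $G=F_n^m{:}\mathfrak{S}_n$ with $m\ge 3$ and $n\ge 4$ is impossible.

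The eigenvalue computation is routine; the genuine obstacle is the positivity statement for $V_2$, that is, controlling the signs of the invariant form on the non-vacuum $W$-submodules of $V$ without assuming unitarity at the outset. This is precisely where the prior identification of the realizable subVOA $W$ as a known unitary lattice-type VOA, together with its rationality, must be used. Once positivity is secured the argument closes uniformly over all $m\ge 3$ and $n\ge 4$, and the same template (replacing $A$ by the appropriate strongly regular graph from Table 5 and $\mathbb{J}_{2^m}$ by the relevant all-ones factor) will dispose of the remaining families in \eqref{eq:6.3}.
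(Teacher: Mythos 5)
Your first two steps are sound: the Gram matrix \eqref{eq:6.4} of $B(F_n^m{:}\mathfrak{S}_n)$ indeed has the eigenvalue $\tfrac{1}{32}(8-2^{m+1})<0$ for $m\geq 3$, and since the surjection $B(G)\to V_2$ preserves the form, its kernel lies in the radical, so a real negative-norm vector of $B(G)$ maps to a negative-norm vector in the real span of $\mathcal{E}$. The genuine gap is the final step, and you have correctly located it yourself: you need the form on $V_2$ to be positive semidefinite, and your sketched justification does not deliver this. The invariant bilinear form of $V$ restricted to an irreducible self-dual $W$-submodule $M$ is determined by self-duality only up to an arbitrary nonzero scalar (and, within an isotypic component $M^{\oplus k}$, up to an arbitrary nondegenerate symmetric $k\times k$ matrix); nothing in the hypotheses forces these scalars to be positive, so ``self-dual unitary components'' do not give definiteness of the restriction. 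Positivity of the form on $V$ is precisely the unitarity/compactness statement, which in this paper is a \emph{consequence} of the completed classification (Theorem \ref{thm:6.7}), not an available tool at the stage of Lemma \ref{lem:6.4}; invoking it here is circular. Indeed, if positive semidefiniteness of the form on $V_2$ could be obtained by such soft arguments, one could appeal to Matsuo's classification under the unitarity hypothesis from the outset, and the hard work of Sections 3 and 4 would be superfluous.

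The paper's proof avoids positivity altogether, and this is the idea you are missing. It first reduces to $G=F_4^3{:}\mathfrak{S}_4$, which is a 3-transposition subgroup of every $F_n^m{:}\mathfrak{S}_n$ with $m\geq 3$, $n\geq 4$, and then uses only the \emph{non-singularity} of the Gram matrix $2^{-5}(8\mathbb{I}+\mathbb{J}_8\otimes A)$: its eigenvalues $40$, $8$, $-8$ are all nonzero, and the sign of $-8$ plays no role. Non-singularity forces the Ising vectors in $\mathcal{E}$ to be linearly independent (a dependence $\sum c_ie_i=0$ would be annihilated by a non-singular Gram matrix). On the other hand, for the subgroup $H\cong F_4^2{:}\mathfrak{S}_4$, Theorem \ref{thm:5.1} identifies $W=\la \mathcal{E}'\ra$ with $V_{\sqrt{2}D_4}^+$, whose Griess algebra is the \emph{non-degenerate quotient} of $B(H)$ and has dimension $22$, while $\abs{\mathcal{E}'}=24$; thus the radical vectors of $B(H)$ actually map to zero in $W_2$, forcing a linear dependence among the Ising vectors of $\mathcal{E}'\subset\mathcal{E}$. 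This contradiction with linear independence finishes the proof. Note that Theorem \ref{thm:5.1} is used to produce genuine linear dependencies, which is strictly stronger than mere degeneracy of the restricted form, and it is exactly what substitutes for the positivity you would need; the same non-singularity-plus-dimension-count template is what the paper then runs for the remaining families in Lemma \ref{lem:6.5} and Corollary \ref{cor:6.6}.
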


\pf
Let $m\geq 3$ and $n\geq 4$.
Since $F_4^3{:}\mathfrak{S}_4$ is a 3-transposition subgroup of $F_n^m{:}\mathfrak{S}_n$,
it suffices to show that $G=F_4^3{:}\mathfrak{S}_4$ is impossible.
Suppose there exists a VOA $V=\la \mathcal{E}\ra$ generated by a finite $\sigma$-closed subset
$\mathcal{E}$ of $E_V$ such that $V_2=\C \mathcal{E}$ and
$G=\la \sigma_e\mid e\in \mathcal{E}\ra\cong F_4^3{:}\mathfrak{S}_4$.
In this case the Gram matrix of the Matsuo algebra $B(G)$ is given by
$2^{-5}(8\mathbb{I}+\mathbb{J}_{8}\tensor A)$ where $A$ is the adjacency matrix of
the strongly regular graph $\mathrm{srg}(6,4,2,4)$.
The eigenvalues of $\mathbb{J}_{8}\tensor A$ are 32, 0 and $-16$ so that the Gram matrix
is non-singular.
Therefore, the Ising vectors in $\mathcal{E}$ are linearly independent.
Take a 3-transposition subgroup $H=F_4^2{:}\mathfrak{S}_4$ of $G$ and consider
the subVOA $W=\la \mathcal{E}'\ra$ with $\mathcal{E}'=\{ e\in \mathcal{E} \mid \sigma_e\in H\}$.
Then $W\cong V_{\sqrt{2}D_4}^+$ by Theorem \ref{thm:5.1}.
The dimension of the Griess algebra of $W$ is 22 but the size of $\mathcal{E}'$ is 24 so that
Ising vectors in $\mathcal{E}'$ are not linearly independent.
This contradicts that the Ising vectors in $\mathcal{E}$ are linearly independent.
Thus $G=F_4^3{:}\mathfrak{S}_4$ is impossible.
\qed

\begin{lem}\label{lem:6.5}
  The group $G$ cannot be one of the following:
 \begin{equation}\label{eq1}
 (2^6)^2{:}\mathrm{O}_{6}^-(2),~
 (2^6)^2{:}\mathrm{Sp}_6(2),~
 (2^8)^2 {:} \mathrm{O}_8^+(2),~
 2^8{:}\mathrm{O}_{8}^-(2),~
 2^8{:}\mathrm{Sp}_8(2),~
 2^{10}{:}\mathrm{O}_{10}^+(2),~
 \mathrm{O}_{10}^-(2).
\end{equation} 
\end{lem}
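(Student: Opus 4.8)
The plan is to adapt the argument of Lemma \ref{lem:6.4}. I would first show that for each group $G$ in the list the Gram matrix of $B(G)$ is non-singular, so that the Ising vectors in a putative realizing set $\mathcal{E}$ are forced to be linearly independent; then I would locate a proper indecomposable $I$-subgroup $H\leq G$ whose associated subVOA $W=\la\mathcal{E}'\ra$, with $\mathcal{E}'=\{e\in\mathcal{E}\mid\sigma_e\in H\}$, is one of the simple VOAs already identified in Theorem \ref{lem:6.3} but whose Griess algebra has dimension strictly smaller than $\abs{I_H}$. This forces the $\abs{I_H}=\abs{\mathcal{E}'}$ Ising vectors of $\mathcal{E}'$ to be linearly dependent, contradicting the independence obtained from non-singularity.

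For the non-singularity I would argue uniformly. Each group in the list has the form $(\mathbb{F}_2^{2n})^m{:}G_0$ with $G_0\in\{\mathrm{Sp}_{2n}(2),\mathrm{O}_{2n}^\pm(2)\}$ and $m\in\{0,1,2\}$, so by \eqref{eq:6.4} its Gram matrix equals $\tfr{1}{32}(8\mathbb{I}+\mathbb{J}_{2^m}\tensor A_0)$, where $A_0$ is the adjacency matrix of the strongly regular graph attached to $G_0$ in Table 5. The eigenvalues of $\mathbb{J}_{2^m}$ are $2^m$ and $0$, and those of $A_0$ are $k$, $r$, $s$, so the eigenvalues of $\mathbb{J}_{2^m}\tensor A_0$ lie in $\{2^m k,\,2^m r,\,2^m s,\,0\}$. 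Since $k,r>0$, every resulting Gram eigenvalue other than $\tfr{1}{32}(8+2^m s)$ is positive; and a direct check of the values of $s$ in Table 5 gives $2^m s=-16$ in each of the seven cases (for instance $2^2\cd(-2^{2})=-16$ for $(2^6)^2{:}\mathrm{O}_6^-(2)$ and $2^0\cd(-2^{4})=-16$ for $\mathrm{O}_{10}^-(2)$), whence $\tfr{1}{32}(8+2^m s)=-\tfr14\neq 0$. Thus the Gram matrix is non-singular and, exactly as in Lemma \ref{lem:6.4}, the Ising vectors of $\mathcal{E}$ are linearly independent.

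For the second step I would use the subgroups below, each a degenerate case of Lemma \ref{lem:6.1} whose realizing VOA $W$ is determined by Theorem \ref{lem:6.3}; the displayed inequality compares $\abs{I_H}$ with $\dim W_2$, read off from Tables 1, 3 and 4.
\[
\begin{array}{l|l|l}
  G & (H,\ W) & \abs{I_H}>\dim W_2 \\ \hline
  (2^6)^2{:}\mathrm{O}_6^-(2) & (2^6{:}\mathrm{O}_6^-(2),\ V_{\sqrt{2}E_6}^+) & 72>57 \\
  (2^6)^2{:}\mathrm{Sp}_6(2) & (2^6{:}\mathrm{Sp}_6(2),\ V_{\sqrt{2}E_7}^+) & 126>91 \\
  (2^8)^2{:}\mathrm{O}_8^+(2) & (2^8{:}\mathrm{O}_8^+(2),\ V_{\sqrt{2}E_8}^+) & 240>156 \\
  2^8{:}\mathrm{O}_8^-(2) & (\mathrm{O}_8^-(2),\ \com(K(A_2,2),V_{\sqrt{2}E_8}^+)) & 136>85 \\
  2^8{:}\mathrm{Sp}_8(2) & (\mathrm{Sp}_8(2),\ K(E_8,2)) & 255>120 \\
  2^{10}{:}\mathrm{O}_{10}^+(2) & (\mathrm{O}_{10}^+(2),\ V_{\sqrt{2}E_8}^+) & 496>156 \\
  \mathrm{O}_{10}^-(2) & (\mathrm{Sp}_8(2),\ K(E_8,2)) & 255>120
\end{array}
\]
In each line the $\abs{I_H}$ Ising vectors of $\mathcal{E}'$ span the Griess algebra $W_2$ of the simple VOA $W$, whose dimension is strictly smaller, so they are linearly dependent; this contradicts the linear independence established above, and hence $G$ cannot occur.

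The step I expect to be the main obstacle is verifying that each displayed $H$ really is an indecomposable $I$-subgroup of $G$ of the stated isomorphism type, so that $\mathcal{E}'$ carries exactly $\abs{I_H}$ Ising vectors and $W$ is identified by Theorem \ref{lem:6.3}. For the split extensions this is immediate, since the complement $G_0$ and the single-layer extension $\mathbb{F}_2^{2n}{:}G_0$ sit inside $G$ as subgroups generated by $3$-transpositions; for the pure group $\mathrm{O}_{10}^-(2)$ I would invoke the standard embedding of $\mathrm{Sp}_8(2)$ as the stabilizer of a non-singular point, an $I$-subgroup in the sense of Fischer (cf.~\cite{CH}), or alternatively take $H\cong\mathrm{O}_8^-(2)$ with the bound $136>85$.
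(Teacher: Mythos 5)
Your proposal is correct and takes essentially the same approach as the paper's proof: non-singularity of the Gram matrix $2^{-5}(8\mathbb{I}+\mathbb{J}_{2^m}\otimes A_0)$ forces the Ising vectors of $\mathcal{E}$ to be linearly independent, and then a 3-transposition subgroup $H$ whose subVOA $W$ is identified as simple by Theorem \ref{lem:6.3} gives $\abs{I_H}>\dim W_2$, contradicting that independence; your subgroup choices and numerical bounds agree with the paper's except that for $2^{10}{:}\mathrm{O}_{10}^+(2)$ the paper takes $H\cong \mathrm{Sp}_8(2)$ (bound $255>120$) rather than your $H\cong\mathrm{O}_{10}^+(2)$ (bound $496>156$), an immaterial difference. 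The paper merely asserts the non-singularity ``by considering the eigenvalues of the adjacency matrices,'' so your uniform check that $2^m s=-16$ (hence Gram eigenvalue $-1/4\neq 0$) in all seven cases is just a more explicit rendering of the same step.
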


\pf
Let $G$ be one of the groups in \eqref{eq1}.
By considering the eigenvalues of the adjacency matrices, we note that
the Gram matrix of $B(G)$ is non-singular.
Suppose there exists a VOA $V=\la \mathcal{E}\ra$ such that $G=\la \mathcal{E}\ra$ and
$V_2=\C \mathcal{E}$.
Then the Griess algebra of $V$ is isomorphic to the algebra $B(G)$
and Ising vectors in $\mathcal{E}$ are linearly independent.
On the other hand, take a 3-transposition subgroup 
$H\cong 2^6{:}\mathrm{O}_6^-(2)$ if $G=(2^6)^2{:}\mathrm{O}_6^-(2)$, 
$H\cong 2^6{:}\mathrm{Sp}_6(2)$ if $G=(2^6)^2{:}\mathrm{Sp}_6(2)$,
$H\cong 2^8{:} \mathrm{O}_8^+(2)$ if $G=(2^8)^2 {:} \mathrm{O}_8^+(2)$, 
$H\cong \mathrm{O}_8^-(2)$ if $G=2^8 {:} \mathrm{O}_8^-(2)$ and 
$H\cong \mathrm{Sp}_8(2)$ if $G$ is either $2^8{:}\mathrm{Sp}_8(2)$, $2^{10}{:}\mathrm{O}_{10}^+(2)$ 
or $\mathrm{O}_{10}^-(2)$.
We consider the subVOA $W=\la \mathcal{E}'\ra$ with
$\mathcal{E}'=\{ e\in \mathcal{E} \mid \sigma_e\in H\}$.
Then $W$ is simple and uniquely determined by Lemma \ref{lem:6.3}.
Since $W_2=\C \mathcal{E}'$ is isomorphic to the non-degenerate quotient of the  algebra 
$B(H)$ which has a smaller dimension than the number of Ising vectors 
$\abs{\mathcal{E}'}=\abs{I_H}$ in any cases.
This contradicts that the Ising vectors in $\mathcal{E}$ are linearly independent since 
$\mathcal{E}'\subset \mathcal{E}$.
Thus $G$ cannot be one of the groups in the assertion.
\qed

\begin{cor}\label{cor:6.6}
  The group $G$ cannot be any of the groups listed in \eqref{eq:6.3}.
\end{cor}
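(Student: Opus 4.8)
The plan is to reduce the corollary to Lemmas \ref{lem:6.4} and \ref{lem:6.5} by the same monotonicity of impossibility that is already invoked inside the proof of Lemma \ref{lem:6.4}: a $3$-transposition group is impossible as soon as it contains an already-excluded group as a $3$-transposition subgroup. First I would isolate this as a general principle. Suppose $V=\la\mathcal{E}\ra$ satisfies Condition \ref{cond:1} with a finite $\sigma$-closed generating set $\mathcal{E}$, $V_2=\C\mathcal{E}$ and $G=G_\mathcal{E}$, and let $G_0$ be a $3$-transposition subgroup of $G$. Then $\mathcal{E}_0:=\{e\in\mathcal{E}\mid\sigma_e\in G_0\}$ is again finite and $\sigma$-closed, so $W_0=\la\mathcal{E}_0\ra$ satisfies Condition \ref{cond:1} and $G_{\mathcal{E}_0}=G_0$; moreover its Griess algebra is $\C\mathcal{E}_0$, since $\C\mathcal{E}_0$ is closed under the Griess product by Proposition \ref{prop:3.4} (if $e,f\in\mathcal{E}_0$ then $\sigma_ef\in\mathcal{E}_0$ as $\sigma_e\sigma_f\sigma_e\in G_0$) and it generates $W_0$. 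Thus a realization of $G$ forces a realization of $G_0$, and impossibility of $G_0$ propagates to $G$. This sidesteps any need to analyze the Gram matrix of $B(G)$ directly for the infinite families.

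Granting this, the task becomes purely group-theoretic: exhibit in each group of \eqref{eq:6.3} a $3$-transposition subgroup treated in Lemma \ref{lem:6.4} or \ref{lem:6.5}. The family $F_n^m{:}\mathfrak{S}_n$ ($n\geq4$, $m\geq3$) is excluded outright by Lemma \ref{lem:6.4}. For the extension families $(\mathbb{F}_2^6)^m{:}\mathrm{O}_6^-(2)$, $(\mathbb{F}_2^6)^m{:}\mathrm{Sp}_6(2)$, $(\mathbb{F}_2^8)^m{:}\mathrm{O}_8^+(2)$ with $m\geq2$ and $(\mathbb{F}_2^8)^m{:}\mathrm{O}_8^-(2)$, $(\mathbb{F}_2^8)^m{:}\mathrm{Sp}_8(2)$, $(\mathbb{F}_2^{10})^m{:}\mathrm{O}_{10}^+(2)$ with $m\geq1$, I would retain the minimal number of copies of the natural module to recover, respectively, $(2^6)^2{:}\mathrm{O}_6^-(2)$, $(2^6)^2{:}\mathrm{Sp}_6(2)$, $(2^8)^2{:}\mathrm{O}_8^+(2)$, $2^8{:}\mathrm{O}_8^-(2)$, $2^8{:}\mathrm{Sp}_8(2)$, $2^{10}{:}\mathrm{O}_{10}^+(2)$, all excluded by Lemma \ref{lem:6.5}.

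For the three genuinely infinite classical families $(\mathbb{F}_2^{2n})^m{:}\mathrm{O}_{2n}^-(2)$ ($n\geq5$), $(\mathbb{F}_2^{2n})^m{:}\mathrm{Sp}_{2n}(2)$ ($n\geq5$) and $(\mathbb{F}_2^{2n})^m{:}\mathrm{O}_{2n}^+(2)$ ($n\geq6$), the single excluded group $\mathrm{O}_{10}^-(2)$ suffices. Passing to the complement reduces to $m=0$, and I would then embed $\mathrm{O}_{10}^-(2)$ into each ambient group: into $\mathrm{O}_{2n}^\pm(2)$ by restricting the quadratic form to a nondegenerate $10$-dimensional subspace of minus type, which exists in both signs because the type is multiplicative under orthogonal sum and $(-)(-)=(+)$, so $n\geq5$ suffices for $\mathrm{O}^-$ and $n\geq6$ for $\mathrm{O}^+$; and into $\mathrm{Sp}_{2n}(2)$ through the chain $\mathrm{O}_{10}^-(2)\leq\mathrm{Sp}_{10}(2)\leq\mathrm{Sp}_{2n}(2)$, the symplectic form being the polarization of the quadratic form.

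The main obstacle is the verification, for these last embeddings, that $3$-transpositions map to $3$-transpositions. This relies on the geometric description of the transpositions: the orthogonal reflections $r_v$ with $Q(v)=1$ restrict to reflections of the chosen subspace and, read through the polarization, coincide with the symplectic transvections $t_v$ that constitute the $3$-transpositions of $\mathrm{Sp}_{2n}(2)$. Once this identification is secured (it is standard in the Fischer--Hall theory underlying \cite{CH,Ha1,Ha2}), every group of \eqref{eq:6.3} contains an excluded $3$-transposition subgroup, and the corollary follows from Lemmas \ref{lem:6.4}, \ref{lem:6.5} and the monotonicity principle.
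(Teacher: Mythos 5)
Your proposal is correct and follows essentially the same route as the paper: the paper's proof of Corollary \ref{cor:6.6} likewise exhibits, inside every group of \eqref{eq:6.3}, an already-excluded $3$-transposition subgroup and then appeals to Lemmas \ref{lem:6.4} and \ref{lem:6.5}, using the inclusions $\mathrm{O}_{2n}^\pm(2)<\mathrm{Sp}_{2n}(2)<\mathrm{O}_{2n+2}^\mp(2)<\mathrm{Sp}_{2n+2}(2)$, $\mathrm{Sp}_{2n}(2)<2^{2n}{:}\mathrm{Sp}_{2n}(2)<\mathrm{Sp}_{2n+2}(2)$ and $(\mathbb{F}_2^{2n})^m{:}H<(\mathbb{F}_2^{2n})^{m+1}{:}H$. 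The only differences are cosmetic: you spell out the monotonicity principle that the paper leaves implicit (having already invoked it in the proof of Lemma \ref{lem:6.4}), and you replace the paper's inductive alternating chains by direct embeddings of $\mathrm{O}_{10}^-(2)$ via restriction of the quadratic form to a nondegenerate minus-type subspace, which is an equally standard and valid argument.
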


\pf
By the inclusions
\[
  \mathrm{O}_{2n}^\pm (2)<\mathrm{Sp}_{2n}(2)<\mathrm{O}_{2n+2}^\mp(2)<\mathrm{Sp}_{2n+2}(2),~~
  \mathrm{Sp}_{2n}(2)<2^{2n}{:}\mathrm{Sp}_{2n}(2)<\mathrm{Sp}_{2n+2}(2),
\]
and $(\mathbb{F}_2^{2n})^m{:}H<(\mathbb{F}_2^{2n})^{m+1}{:}H$ for $H=\mathrm{Sp}_{2n}(2)$
or $H=\mathrm{O}_{2n}^\pm(2)$, those groups in the statement are forbidden by Lemmas
\ref{lem:6.4}, and \ref{lem:6.5}.
\qed
\medskip

Summarizing everything, we obtain the following complete classification of VOAs satisfying
Condition \ref{cond:1}.

\begin{thm}\label{thm:6.7}
  Let $V$ be a VOA satisfying Condition \ref{cond:1} and $E_V$ the set of Ising vectors of $V$.
  Suppose further that $E_V$ is indecomposable and $\abs{E_V}>1$.
  Take a finite $\sigma$-closed generating subset $\mathcal{E}$ of $E_V$.
  Then $G=\la \sigma_e \mid e\in \mathcal{E}\ra$ is isomorphic to one of the following:
  \[
  \begin{array}{l}
    \mathfrak{S}_n~ (n\geq 3),~~
    F_n{:}\mathfrak{S}_n~ (n\geq 4),~~
    F_n^2{:}\mathfrak{S}_n~ (n\geq 4),~~
    \mathrm{O}_6^-(2),~~
    2^6{:}\mathrm{O}_6^-(2),
    \medskip\\
    \mathrm{Sp}_6(2),~~
    2^6{:}\mathrm{Sp}_6(2),~~
    \mathrm{O}_8^+(2),~~
    \mathrm{O}_8^-(2),~~
    \mathrm{Sp}_8(2),~~
    2^8{:}\mathrm{O}_8^+(2),~~
    \mathrm{O}_{10}^+(2).
  \end{array}
  \]
  Moreover, $V$ is simple and the Griess algebra of $V$ is isomorphic to the non-degenerate
  quotient of the algebra $B(G)$.
  It turns out that $E_V$ is finite and $G_V=\la \sigma_e \mid e\in E_V\ra$ is a finite 3-transposition
  group, and $G=G_V$ except for $G=\mathrm{O}_8^+(2)$ or $G=2^8{:}\mathrm{O}_8^+(2)$, where
  $G_V=\mathrm{Sp}_8(2)$ if $G=\mathrm{O}_8^+(2)$ and $G_V=\mathrm{O}_{10}^+(2)$ if
  $G=2^8{:}\mathrm{O}_8^+(2)$.
  The real subalgebra $V_\R=\la E_V\ra_\R$ of $V$ is compact, i.e., the bilinear form on $V_\R$ with $(\vac \mymid \vac)=1$ is positive definite.
\end{thm}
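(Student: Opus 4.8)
The plan is to assemble the case-by-case results of Sections 3--5 into one statement, using Hall's classification to enumerate the candidate groups and the simplicity theorems to pin down $V$. First I would fix, via Lemma \ref{lem:3.6}, a finite $\sigma$-closed generating set $\mathcal{E}$; since $E_V$ is assumed indecomposable we may take $\mathcal{E}$ indecomposable as well, so that $G=\la\sigma_e\mid e\in\mathcal{E}\ra$ is a finite indecomposable $3$-transposition group --- finite by Theorem \ref{thm:lf} and of symplectic type by Theorem \ref{thm:3.7}(1), the latter property passing to the $I$-subgroup $G$ of $G_V$ directly from the definition. Applying Hall's classification (Theorem \ref{thm:hall}), $G$ is an extension of one of $\fs_3$, $\fs_n$ $(n\geq 5)$, $\mathrm{Sp}_{2n}(2)$, $\mathrm{O}_{2n}^\pm(2)$ by copies of the natural module; the possible such extensions are precisely the groups in the assertion together with those listed in \eqref{eq:6.3}. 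Corollary \ref{cor:6.6} rules out every group in \eqref{eq:6.3}, so $G$ must be one of the groups in the list.

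Next I would establish simplicity and the Griess algebra structure by invoking the appropriate earlier result for each $G$: the cases $G=\fs_n$ $(n\geq 3)$ and $G=F_n{:}\fs_n$ $(n\geq 4)$ are covered by \cite[Theorem 4.1]{JLY} and Theorem \ref{thm:4.14}; the non-degenerate cases $\mathrm{O}_6^-(2)$, $\mathrm{Sp}_6(2)$, $\mathrm{O}_8^+(2)$ by Theorem \ref{thm:5.2}; and the degenerate cases $F_n^2{:}\fs_n$ and $2^6{:}\mathrm{O}_6^-(2),\dots,\mathrm{O}_{10}^+(2)$ by Theorems \ref{thm:5.1} and \ref{lem:6.3}. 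In every instance $V$ is simple and its Griess algebra is isomorphic to the non-degenerate quotient of $B(G)$, whence Theorem \ref{thm:3.7}(4) shows the VOA structure of $V$ is uniquely determined by this Griess algebra.

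For finiteness and the identification of $G_V$, I would use the explicit realization just obtained: $V$ is isomorphic to one of $K(A_n,2)$, $V_{\sqrt{2}R}^+$, $K(R,2)\cong M_R$, or $\com(K(A_2,2),V_{\sqrt{2}E_8}^+)$, each of which has a finite set of Ising vectors by the classification of \cite{LSY}. Hence $E_V$ is finite and $G_V=\la\sigma_e\mid e\in E_V\ra$ is a finite $3$-transposition group. For all groups other than those arising inside $K(E_8,2)$ and $V_{\sqrt{2}E_8}^+$, the chosen $\mathcal{E}$ already exhausts $E_V$, so $G=G_V$; in the two exceptional realizations $E_V$ contains additional Ising vectors of $E_8$-type (Table 3 and Remark \ref{rem:3.8}), and adjoining them enlarges $\mathrm{O}_8^+(2)$ to $G_V=\mathrm{Sp}_8(2)$ for $K(E_8,2)$ and $2^8{:}\mathrm{O}_8^+(2)$ to $G_V=\mathrm{O}_{10}^+(2)$ for $V_{\sqrt{2}E_8}^+$.

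Finally, for compactness I would combine Lemma \ref{lem:6.1} with the realization of $V$. By Lemma \ref{lem:6.1} the form on the real part of $B(G)$ is positive semidefinite for every $G$ in the list, so on its non-degenerate quotient --- which is the Griess algebra of $V$ --- the form is positive definite; thus the form on $(V_2)_\R$ is positive definite. Since $V$ is simple of OZ-type, generated by its Griess algebra, and identified with one of the explicit unitary lattice-type models above, the positive-definite invariant form propagates from degree two to all of $V_\R=\la E_V\ra_\R$; as the Ising vectors $w^\pm(\alpha)$ are real elements, $\la E_V\ra_\R$ is contained in the known compact real form, and since it generates $V$ over $\C$ it is a full real form, hence equal to the compact real form and positive definite. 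The \emph{main obstacle} is precisely this last step: passing from positivity of the Griess algebra to positivity of the entire real subalgebra is not purely formal and relies on certifying that $\la E_V\ra_\R$ coincides with the compact real form of each realizing VOA, rather than merely sitting inside it.
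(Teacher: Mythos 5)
Your proposal is correct and follows essentially the same route as the paper, whose proof of Theorem \ref{thm:6.7} is exactly this compilation: Lemma \ref{lem:3.6} together with Theorems \ref{thm:lf}, \ref{thm:3.7} and \ref{thm:hall} to bound the candidate groups, Corollary \ref{cor:6.6} to eliminate the groups in \eqref{eq:6.3}, the case-by-case simplicity results (\cite[Theorem 4.1]{JLY} and Theorems \ref{thm:4.14}, \ref{thm:5.2}, \ref{thm:5.1}, \ref{lem:6.3}) combined with Theorem \ref{thm:3.7}(4), and the explicit realizations with Tables 1--4 and Remark \ref{rem:3.8} for the finiteness of $E_V$, the identification of $G_V$, and compactness. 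The ``main obstacle'' you flag is actually lighter than you suggest: since Theorem \ref{thm:6.7} asserts only positive definiteness of the form on $\la E_V\ra_\R$, containment of $\la E_V\ra_\R$ in the compact real form of the realizing lattice-type model already suffices, and equality (needed for the real-form statement of the Main Theorem) then follows from $\C$-spanning plus definiteness exactly as you indicate.
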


Since a simple VOA satisfying Condition \ref{cond:1} is uniquely determined by its Griess algebra,
we have established Theorem \ref{thm:main}.

%%%%%%%%%%%%%%%%%%%%%%%%%%%%%%%%%%%%%%%%%%%%%%%%%%%%%%%%%%%%%%%%%%%%%%%

\small

\end{document}